\DeclareMathOperator{\Hom}{Hom}
\DeclareMathOperator{\End}{End}
\DeclareMathOperator{\Ext}{Ext}
\DeclareMathOperator{\grmod}{-grmod}
\DeclareMathOperator{\stab}{-stab}
\DeclareMathOperator{\grstab}{-grstab}
\DeclareMathOperator{\dgstab}{-dgstab}
\DeclareMathOperator{\Z}{\mathbb{Z}}
\let\amsamp=&
\newtheorem{theorem}{Theorem}[section]
\newtheorem{lemma}[theorem]{Lemma}
\newtheorem{proposition}[theorem]{Proposition}
\newtheorem{corollary}[theorem]{Corollary}
\theoremstyle{definition}
\newtheorem{definition}[theorem]{Definition}
\theoremstyle{remark}
\newtheorem*{remark}{Remark}
\newtheorem*{example}{Example}
\begin{document}

\title{Perverse Equivalences and Dg-Stable Combinatorics}
\author{Jeremy R. B. Brightbill}
\date{\today}

\maketitle

\begin{abstract}
Chuang and Rouquier \cite{chuang2017perverse} describe an action by perverse equivalences on the set of bases of a triangulated category of Calabi-Yau dimension $-1$.  We develop an analogue of their theory for Calabi-Yau categories of dimension $w<0$ and show it is equivalent to the mutation theory of $w$-simple-minded systems.

Given a non-positively graded, finite-dimensional symmetric algebra $A$, we show that the differential graded stable category of $A$ has negative Calabi-Yau dimension.  When $A$ is a Brauer tree algebra, we construct a combinatorial model of the dg-stable category and show that perverse equivalences act transitively on the set of $|w|$-bases.
\end{abstract}


\section{Introduction}

Perverse equivalences are equivalences of triangulated categories performed with respect to a stratification.  They were first used by Chuang and Rouquier \cite{chuang2008derived} in their proof of Brou\'e's abelian defect conjecture for symmetric groups; the theory of perverse equivalences was later formalized by the same authors in \cite{chuang2017perverse}.  In this work, Chuang and Rouquier define an action of perverse equivalences on a collection of t-structures, parametrized by tilting complexes, in the bounded derived category of a symmetric algebra.  The further study of this action is the primary motivation of this paper; we shall consider the case of a non-positively graded Brauer tree algebra, $A$.  In doing so, we make two modifications to this action.

The first modification is of the ambient triangulated category.  The bounded derived category of $A$ has many t-structures, and the action of perverse equivalences exhibits braid-like relations, twisted by some other structure.  Viewing the graded algebra $A$ as a dg-algebra with zero differential, we consider the differential graded stable category, $A\dgstab$, whose properties are discussed in \cite{brightbill2018differential}.  The dg-stable category is defined to be the quotient $D^b_{dg}(A)/D^{perf}_{dg}(A)$ of the bounded derived category of (finite-dimensional) dg-modules by the thick subcategory generated by $A$; we can also express $A\dgstab$ as the triangulated hull of the orbit category $A\grstab/\Omega(1)$.  By moving to this setting, we are effectively dividing out the braid group action and studying the residual structure.

Since all projective modules become zero in $A$-dgstab, there are no longer tilting complexes, or even t-structures.  It is therefore necessary to modify the action itself.  Chuang and Rouquier successfully adapt their action to the stable module category and, more generally, to Calabi-Yau categories of dimension $-1$; in this setting, the perverse equivalences act on the set of bases of the category.  The dg-stable category is a generalization of the stable category; the two notions coincide when $A$ is concentrated in degree zero (i.e. ungraded).  One interesting feature of the dg-stable category is the interplay between the grading data of $A$ and the homological structure of the corresponding category of dg-modules; by viewing the grading data of $A$ a parameter, one can study the behavior of the resulting family of triangulated categories.  Up to Morita equivalence, the grading on $A$ is determined by the degree of its socle; when the socle of $A$ is concentrated in degree $-d \le 0$, $A\dgstab$ will be $-(d+1)$-Calabi-Yau.  For any $w<0$, we define an action of perverse equivalences on the set of ``$|w|$-bases'' of a $w$-Calabi-Yau triangulated category.

In the negative Calabi-Yau setting, perverse equivalences are essentially equivalent to the mutation theory of simple-minded systems.  Simple-minded systems were first defined by Koenig and Liu \cite{koenig2011simple} for the stable category of an Artin algebra; Dugas \cite{dugas2015torsion} defines simple-minded systems in an arbitrary Hom-finite Krull-Schmidt category and develops their mutation theory.  Just as the summands in a tilting complex satisfy orthogonality and generating conditions identical to the projective modules in the derived category, simple-minded systems mimic the behavior of simple modules in the stable category.  Coelho Sim\~oes \cite{simoes2017mutations} introduces $|w|$-simple-minded systems in the setting of a $w$-Calabi-Yau category ($w < 0$), and, with Pauksztello, further develops their theory in \cite{simoes2018simple}.  Though we shall primarily use the language of Chuang and Rouquier throughout this paper, we make precise the relationship between the two perspectives.

Once the necessary machinery is in place, we study the action of perverse equivalences on $A\dgstab$ via a combinatorial model, in which objects of $A\dgstab$ are represented by interlocking beads of varying lengths on a circular wire.  $|w|$-bases (or, equivalently, $|w|$-simple-minded systems), correspond to maximal non-overlapping configurations of beads, and perverse equivalences (i.e. mutations) act via physically intuitive transformations of beads.  Our main result establishes transitivity of the action; hence every $|w|$-basis can be obtained via applying successive perverse equivalences to the original collection of simple $A$-modules.

Late in the writing of this paper, the author learned that the category $A\dgstab$ is isomorphic to a category $\mathcal{C}_{|w|}(\mathcal{Q})$ studied by Coelho Sim\~oes \cite{simoes2015hom}, who describes its $|w|$-simple-minded systems via a combinatorial model involving diagonals of an $N$-gon.  In \cite{simoes2017mutations}, Coelho Sim\~oes describes the mutation theory of $|w|$-simple-minded systems in a larger category $T_w$, implicitly solving the problem for $\mathcal{C}_{|w|}(\mathcal{Q}) \cong A\dgstab$.  Though the two models are closely related, they are not identical:  the bead model is a two-to-one covering of the arc model.  Each bead $B$ possesses a ``partner'' $\widetilde{B}$; if $B$ corresponds to the object $X \in A\dgstab$, then its partner corresponds to the isomorphic object $\Omega X(1)$.  The bead model thus provides a one-to-one correspondence with the indecomposable objects of the fractional Calabi-Yau category $A\grstab/\Omega^2(2)$.  Though we will not further investigate this phenomenon in this paper, we mention it as evidence that the extra symmetry present in the bead model is not merely a combinatorial artifact but rather captures actual structural information.

In Section \ref{Notation and Terminology}, we lay out notational conventions and definitions.  In Section \ref{Action}, we adapt the action of Chuang and Rouquier to negative Calabi-Yau categories.  In Section \ref{The Dg-stable Category} we review the basic properties of the dg-stable category of a symmetric algebra and show that it is a Calabi-Yau category.  In Section \ref{Bead Model}, we develop the combinatorial model for $A\dgstab$.  In Section \ref{mutations}, we lift the action of perverse equivalences to beads and prove transitivity.  In Section \ref{Connections}, we discuss connections with simple-minded systems and the arc model for $A\dgstab$.


\section{Notation and Terminology}
\label{Notation and Terminology}

All categories are assumed to be $k$-linear over a fixed algebraically closed field $k$.  All algebras are $k$-algebras.


\subsection{Graded Modules}

Let $A$ be a finite-dimensional graded algebra.  Let $A\grmod$ denote the category of finite-dimensional graded right $A$-modules.  Let $(n)$ denote the grading shift functor on $A\grmod$, given by $(X(n))^i = X^{n+i}$.  Let $A\grstab$ denote the stable category of graded modules.  The objects of $A\grstab$ are the objects of $A\grmod$; given $X,Y \in A\grstab$, define $\Hom_{A\grstab}(X,Y)$ to be the quotient of $\Hom_{A\grmod}(X,Y)$ by the $k$-subspace of all morphisms factoring through a graded projective module.  If $A$ is self-injective, $A\grstab$ admits the structure of a triangulated category, in which the syzygy functor $\Omega$, sending a module to the kernel of a projective cover, serves as the desuspension functor.


\subsection{Triangulated Categories}
Let $(\mathcal{T}, \Sigma)$ be a triangulated category with suspension functor $\Sigma$.  Let $S$ denote a collection of objects in $\mathcal{T}$.

For any morphism $f: X \rightarrow Y$, we write $C(f)$ for the object (unique up to non-canonical isomorphism) completing the triangle $X \xrightarrow{f} Y \rightarrow C(f) \rightarrow \Sigma X$.  We refer to $C(f)$ as the \textbf{cone} of $f$.

Let $S^\perp = \{ X \in Ob(\mathcal{T}) \mid \Hom(Y, X) = 0 \text{ for all } Y \in S\}$ and ${}^{\perp}S = \{ X \in Ob(\mathcal{T}) \mid \Hom(X, Y) = 0 \text{ for all } Y \in S\}$.

Let $\langle S \rangle$ denote the smallest full subcategory of $\mathcal{T}$ which contains $S$ and is closed under isomorphisms and extensions.

A \textbf{Serre functor} of $\mathcal{T}$ is an autoequivalence $\mathbb{S}$ of $\mathcal{T}$ such that there is an isomorphism $\Hom_{\mathcal{T}}(X, Y) \cong \Hom_{\mathcal{T}}(Y, \mathbb{S}X)^*$ which is natural in $X$ and $Y$.

$\mathcal{T}$ is said to be $w$-\textbf{Calabi-Yau} for some $w \in \mathbb{Z}$ if $\Sigma^w$ is a Serre functor for $\mathcal{T}$.


\subsection{$|w|$-Bases}

Let $(\mathcal{T}, \Sigma)$ be $w$-Calabi-Yau, for some $w < 0$.

Following Coelho Sim\~oes and Pauksztello \cite{simoes2018simple}, a tuple $(X_1, \cdots X_n)$ of objects in $\mathcal{T}$ called $|w|$-\textbf{orthogonal} if $dim\Hom(X_i, \Sigma^{-m}X_j) = \delta_{i=j}\delta_{m=0}$ for all $1 \le i, j \le n$ and $0 \le m \le |w|-1$.

If, in addition, we have that $\mathcal{T} = $$\langle \{\Sigma^{-m}X_i \mid 1 \le i \le n, 0 \le m < |w|\} \rangle$, we say $(X_1, \cdots, X_n)$ is a $|w|$-\textbf{basis} for $\mathcal{T}$.
 

\subsection{Maximal Extensions}
\label{Maximal Extensions}

Let $(\mathcal{T}, \Sigma)$ be a triangulated category and $\mathcal{S}$ be a collection of objects in $\mathcal{T}$.  In \cite{chuang2017perverse}, Chuang and Rouquier define maximal extensions as follows:

\begin{definition}{(Chuang, Rouquier, \cite{chuang2017perverse}, Definition 3.28)}
\label{maximal extension}
Let $f: X \rightarrow Y$ be a morphism in $\mathcal{T}$.

$f$ (or $X$) is a \textbf{maximal extension of $Y$ by $\mathcal{S}$} if $\Sigma^{-1} C(f) \in \mathcal{S}$ and $\Hom(\Sigma^{-1}C(f), S) \xrightarrow{\sim} \Hom(\Sigma^{-1}Y, S)$ is an isomorphism for all $S \in \mathcal{S}$.

$f$ (or $Y$) is a \textbf{maximal $\mathcal{S}$-extension by $X$} if $C(f) \in \mathcal{S}$ and $\Hom(S, C(f))$ $\xrightarrow{\sim} \Hom(S, \Sigma X)$ is an isomorphism for all $S \in \mathcal{S}$.
\end{definition}

If $X \in {}^{\perp}\mathcal{S} \cap \mathcal{S}^{\perp}$, Chuang and Rouquier (\cite{chuang2017perverse}, Lemma 3.29) prove that both maximal extensions of $\mathcal{S}$ by $X$ and maximal $X$-extensions of $\mathcal{S}$ are unique up to unique isomorphism (if they exist).  They also prove the following characterization of maximal extensions:

\begin{proposition}{(Chuang, Rouquier, \cite{chuang2017perverse}, Lemma 3.30)}
\label{orthogonal extensions}
Suppose $\mathcal{S}$ is closed under extensions.  Let $f: X \rightarrow Y$ be a morphism in $\mathcal{T}$.

Let $\Hom(Y, \mathcal{S}) = 0$.  Then $f$ is a maximal extension of $Y$ by $\mathcal{S}$ if and only if $\Sigma^{-1}C(f) \in \mathcal{S}$ and $\Hom(X, \mathcal{S}) = \Hom(X, \Sigma \mathcal{S}) = 0$.

Let $\Hom(\mathcal{S}, X) = 0$.  Then $f$ is a maximal $\mathcal{S}$-extension by $X$ if and only if $C(f) \in \mathcal{S}$ and $\Hom(\mathcal{S}, Y) = \Hom(\mathcal{S}, \Sigma Y) = 0$.
\end{proposition}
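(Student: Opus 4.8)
The plan is to reduce both halves of each equivalence to a bookkeeping exercise with the long exact sequences attached to the triangle completing $f$. First I would complete $f$ to a distinguished triangle $X \xrightarrow{f} Y \xrightarrow{g} C(f) \xrightarrow{h} \Sigma X$ and rotate it to $\Sigma^{-1}Y \xrightarrow{-\Sigma^{-1}g} \Sigma^{-1}C(f) \to X \xrightarrow{f} Y$. The comparison morphism of Definition \ref{maximal extension} is exactly the map induced on $\Hom(-,S)$ by $-\Sigma^{-1}g$, so for each $S\in\mathcal{S}$ it is one of the arrows in the exact sequence
\[
\Hom(Y,S) \to \Hom(X,S) \to \Hom(\Sigma^{-1}C(f),S) \xrightarrow{\ \beta_S\ } \Hom(\Sigma^{-1}Y,S) \to \Hom(\Sigma^{-1}X,S) \to \cdots,
\]
the term after $\Hom(\Sigma^{-1}X,S)$ being $\Hom(\Sigma^{-2}C(f),S)$. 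Since $\Sigma^{-1}C(f)\in\mathcal{S}$ occurs in both descriptions, I would assume it from the start and only compare ``$\beta_S$ is an isomorphism for all $S\in\mathcal{S}$'' with ``$\Hom(X,\mathcal{S})=\Hom(X,\Sigma\mathcal{S})=0$'', using throughout the identification $\Hom(\Sigma^{-1}X,S)\cong\Hom(X,\Sigma S)$.

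For the ``if'' direction, suppose $\Sigma^{-1}C(f)\in\mathcal{S}$ and $\Hom(X,\mathcal{S})=\Hom(X,\Sigma\mathcal{S})=0$, and fix $S\in\mathcal{S}$. In the sequence above the term immediately to the left of $\beta_S$ is $\Hom(X,S)=0$ and the term immediately to its right is $\Hom(\Sigma^{-1}X,S)\cong\Hom(X,\Sigma S)=0$, so exactness forces $\beta_S$ to be both injective and surjective; hence $f$ is a maximal extension of $Y$ by $\mathcal{S}$.

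For the ``only if'' direction, suppose $f$ is a maximal extension of $Y$ by $\mathcal{S}$, so $\beta_S$ is an isomorphism for every $S\in\mathcal{S}$. The hypothesis $\Hom(Y,\mathcal{S})=0$ kills the leftmost arrow, so $\Hom(X,S)$ injects into $\ker\beta_S=0$ and $\Hom(X,\mathcal{S})=0$. Surjectivity of $\beta_S$ makes the arrow $\Hom(\Sigma^{-1}Y,S)\to\Hom(\Sigma^{-1}X,S)$ vanish, so $\Hom(\Sigma^{-1}X,S)$ injects into $\Hom(\Sigma^{-2}C(f),S)\cong\Hom(\Sigma^{-1}C(f),\Sigma S)$; vanishing of this last group — which uses that $\Sigma^{-1}C(f)$ is an object of $\mathcal{S}$ (see below) — then gives $\Hom(X,\Sigma S)\cong\Hom(\Sigma^{-1}X,S)=0$, i.e. $\Hom(X,\Sigma\mathcal{S})=0$. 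The second assertion is obtained symmetrically by applying $\Hom(S,-)$ to the same triangle and following the morphism $\Hom(S,C(f))\to\Hom(S,\Sigma X)$ induced by $h$: now $\Hom(\mathcal{S},X)=0$ and $C(f)\in\mathcal{S}$ play the roles previously played by $\Hom(Y,\mathcal{S})=0$ and $\Sigma^{-1}C(f)\in\mathcal{S}$, and injectivity and surjectivity of the comparison map correspond to $\Hom(\mathcal{S},Y)=0$ and $\Hom(\mathcal{S},\Sigma Y)=0$ respectively.

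Manipulating the long exact sequence is routine once the rotations and signs are pinned down; the one genuinely delicate point is the claim used above that $\Hom(\Sigma^{-1}C(f),\Sigma\mathcal{S})=0$ (dually $\Hom(\mathcal{S},\Sigma C(f))=0$). This is precisely the step where surjectivity of the comparison map gets promoted to the honest vanishing $\Hom(X,\Sigma\mathcal{S})=0$, and it is where one must use the orthogonality properties of $\mathcal{S}$ in force in the ambient setting rather than extension-closedness alone; I would isolate and check this as a separate observation before assembling the equivalence.
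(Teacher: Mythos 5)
Your long-exact-sequence bookkeeping proves the ``if'' directions correctly, and in the ``only if'' direction both of your partial observations are right: $\Hom(Y,\mathcal{S})=0$ plus injectivity of $\beta_S$ gives $\Hom(X,\mathcal{S})=0$, and surjectivity of $\beta_S$ gives an injection $\Hom(X,\Sigma S)\hookrightarrow\Hom(\Sigma^{-1}C(f),\Sigma S)$. (Note the paper contains no proof to compare against: the proposition is quoted from Chuang--Rouquier, so your argument has to stand on its own.) The gap is exactly the point you deferred, and it cannot be repaired the way you suggest: the vanishing $\Hom(\Sigma^{-1}C(f),\Sigma\mathcal{S})=0$ is not a hypothesis, and it is \emph{false} in the ambient setting of the paper. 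There $\mathcal{S}=\langle X_s\mid s\in S\rangle$ is the extension closure of part of a $|w|$-orthogonal tuple, and $|w|$-orthogonality only kills $\Hom(X_i,\Sigma^{-m}X_j)$ for $0\le m\le |w|-1$, i.e.\ non-positive shifts; it says nothing about $\Sigma^{+1}$. Indeed $\Hom(\mathcal{S},\Sigma\mathcal{S})\neq 0$ is the generic situation (in the Brauer-tree application $\dim\Ext^1(S_i,S_{i+1})=1$), and it is precisely these extensions that make $\langle\mathcal{S}\rangle$ and the maximal extensions nontrivial. So the last group in your exact sequence does not vanish and the argument stalls there; the same problem recurs dually in the second statement.

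The forward implication has to use extension-closedness of $\mathcal{S}$ together with both halves of the maximality isomorphism, not the sequence for a single $S$ alone. One way: write $S_0=\Sigma^{-1}C(f)$ with triangle $S_0\xrightarrow{i}X\xrightarrow{f}Y\xrightarrow{g}\Sigma S_0$, take any $\phi\colon X\to\Sigma S$ with $S\in\mathcal{S}$, set $Z=\Sigma^{-1}C(\phi)$, and apply the octahedral axiom to $Z\to X\xrightarrow{f}Y$. The cone of the composite $Z\to Y$ is $\Sigma S_1$, where $S_1$ sits in a triangle $S\to S_1\xrightarrow{p} S_0\xrightarrow{\delta}\Sigma S$ whose connecting map is $\delta=\pm\,\phi\circ i$, and $S_1\in\mathcal{S}$ by extension-closedness. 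Surjectivity of $\beta_{S_1}$ factors $\Sigma^{-1}(Y\to\Sigma S_1)$ through $\Sigma^{-1}g$, say via $q\colon S_0\to S_1$; since $p$ composed with $Y\to\Sigma S_1$ recovers $g$, injectivity of $\beta_{S_0}$ forces $pq=\mathrm{id}_{S_0}$, so $p$ is a split epimorphism and $\delta=0$, i.e.\ $\phi\circ i=0$. Combined with the injectivity $\Hom(X,\Sigma S)\hookrightarrow\Hom(S_0,\Sigma S)$ that you already extracted from surjectivity of $\beta_S$, this gives $\phi=0$, hence $\Hom(X,\Sigma\mathcal{S})=0$; the dual construction handles the maximal $\mathcal{S}$-extension statement. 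Without some such use of extension-closedness your proposal does not reach the conclusion.
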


Suppose that $\mathcal{T}$ is $w$-Calabi-Yau for some $w < 0$, and fix an integer $n > 0$.  We say that $\mathcal{T}$ \textbf{admits $|w|$-orthogonal maximal extensions} if, given a $|w|$-orthogonal $n$-tuple $(X_1, \cdots, X_n)$, a subset $\mathcal{S} \subset \{X_i\}$, and $X_j \notin \mathcal{S}$, both the maximal extension of $X_j$ by $\mathcal{S}$ and the maximal $\mathcal{S}$-extension by $X_j$ exist.  We will generally ignore the dependence of this definition on the integer $n$, since we will be working with a fixed $n$ throughout the paper.








\subsection{Rooted Plane Trees}
\label{Rooted Plane Trees}

A \textbf{tree} is a connected graph without cycles.  We write $V_T$ and $E_T$ for the sets of vertices and edges, respectively, in $T$; the subscripts will be omitted when there is no risk of confusion.  A \textbf{rooted tree} is a pair $(T, r)$ where $r \in V_T$; $r$ is called the \textbf{root} of $T$.  Each vertex $v$ admits a unique minimal path $\gamma_v$ to the root; the \textbf{depth}, $d(v)$, of $v$ is the number of edges in this path.  If a vertex $u \neq v$ lies on $\gamma_v$, we say that $u$ is an \textbf{ancestor} of $v$ and that $v$ is a \textbf{descendant} of $u$.  Each vertex $v \neq r$ has a unique adjacent ancestor, called the \textbf{parent} of $v$, denoted $p(v)$.  For any vertex $v$, we say $u$ is a \textbf{child} of $v$ if $v$ is the parent of $u$, and we denote by $c(v)$ the set of children of $v$.  We say $v$ is a \textbf{leaf} if $v$ has no children.

For a finite rooted tree $(T, r)$, we define the \textbf{weight} $W(v)$ of $v$ to be the number of vertices of the subtree consisting of $v$ and its descendants; thus $W(r) = |V_T|$ and $W(v) = 1$ if and only if $v$ is a leaf.  It is clear that $W(v)$ is given recursively by $W(v) = 1 + \sum_{u \in c(v)} W(u)$.  There is a one-to-one correspondence between the edges of $(T, r)$ and the non-root vertices, with each edge corresponding to the incident vertex of greater depth.  Using this bijection, we define the \textbf{weight} of an edge $W(e)$ to be the weight of the corresponding vertex.

When we wish to emphasize the dependence on a choice of root, we will write $d_r(v), p_r(v), W_r(v)$, etc.

Let $T$ be a tree with $n$ edges.  We say a rooted tree $(T,r)$ is \textbf{balanced} if for all $v \in c(r)$ (or, equivalently, for all $v \neq r$), $W(v) \le \lfloor \frac{n+1}{2} \rfloor$.  If $v$ is a child of $r$ in $(T,r)$, we say the rooted tree $(T,v)$ is a \textbf{rebalancing of $(T,r)$ in the direction of $v$}.

A \textbf{plane tree} is a tree $T$ together with a cyclic ordering of the edges incident to each vertex.  One can specify this data by drawing $T$ in the plane such that each vertex is locally embedded.

Let $\mathcal{T}_n$ denote the set of (isomorphism classes of) trees with $n$ edges.  Let $\mathcal{PT}_n$ denote the set of (isomorphism classes of) plane trees with $n$ edges.

\begin{example}
Let $T$ be a line with $n$ edges.  If $n$ is even, then $(T, r)$ is balanced if and only if $r$ is the middle vertex of $T$.  If $n$ is odd, then $(T, r)$ is balanced if and only if $r$ is either of the vertices incident to the middle edge of $T$.
\end{example}

\begin{proposition}
\label{tree balancing}
Let $T$ be a tree with $n$ edges.  Then there exists $r \in V_T$ such that $(T, r)$ is balanced.  Either $T$ has a unique balancing root, or it has exactly two balancing roots $r$ and $r'$.  In the latter case, $n$ is odd, $r$ and $r'$ are adjacent and the edge joining them has weight $\frac{n+1}{2}$.  Conversely, if $n$ is odd and the rooted tree $(T, r)$ has an edge $r-r'$ of weight $\frac{n+1}{2}$, then $r$ and $r'$ are both balancing roots of $T$.
\end{proposition}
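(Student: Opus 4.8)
The plan is to run the classical \emph{centroid} argument, organised around the behaviour of edge weights under a change of root. The one identity everything rests on is additivity: if $v$ is a child of $r$ in $(T,r)$, then deleting the edge between $r$ and $v$ disconnects $T$ into a subtree containing $r$ and a subtree containing $v$, of sizes $W_v(r)$ and $W_r(v)$, so that $W_r(v)+W_v(r)=n+1$; more generally the weight of an edge $e$ depends on the chosen root only through which of the two components of $T\setminus\{e\}$ contains it. I will combine this with two elementary facts: $\sum_{v\in c_r(r)}W_r(v)=n$ (so $r$ has at most one child of weight $>\lfloor\frac{n+1}{2}\rfloor$, as two such would sum to more than $n$), and $(T,r)$ is balanced if and only if $W_r(v)\le\lfloor\frac{n+1}{2}\rfloor$ for every $v\neq r$ — equivalently, for every edge $e$ the component of $T\setminus\{e\}$ not containing $r$ has at most $\lfloor\frac{n+1}{2}\rfloor$ vertices — because the weight of any $v\neq r$ is at most that of the child of $r$ above it.

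For existence (the case $n=0$ being trivial), put $M(r):=\max_{v\in c_r(r)}W_r(v)$ and choose $r$ minimising $M(r)$; I claim $(T,r)$ is balanced. If not, let $v$ be its unique heavy child, so $M(r)=W_r(v)\ge\lfloor\frac{n+1}{2}\rfloor+1$. The children of $v$ in $(T,v)$ are $r$ together with the children of $v$ in $(T,r)$, whose subtrees are unchanged. We have $W_v(r)=n+1-W_r(v)\le\lfloor\frac{n+1}{2}\rfloor<M(r)$, and for each old child $u$ of $v$ we have $W_v(u)=W_r(u)\le W_r(v)-1=M(r)-1<M(r)$ since the subtree of $v$ in $(T,r)$ properly contains that of $u$. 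Hence $M(v)<M(r)$, contradicting minimality.

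For uniqueness, let $r\neq r'$ both be balancing roots, and let $e$ be the edge incident to $r'$ on the path from $r$ to $r'$, splitting $T$ into $A\ni r$ and $B\ni r'$ with $|A|+|B|=n+1$. The edge characterisation applied to $r$ gives $|B|\le\lfloor\frac{n+1}{2}\rfloor$ and applied to $r'$ gives $|A|\le\lfloor\frac{n+1}{2}\rfloor$, forcing $|A|=|B|=\frac{n+1}{2}$ and $n$ odd. If $r$ and $r'$ were at distance $\ge2$, the same reasoning applied to the edge $e'$ incident to $r$ would give its $r$-side $A'$ exactly $\frac{n+1}{2}$ vertices; but $A'\subseteq A$ (a path from a vertex of $A'$ to $r$ that crossed $e$ would have to traverse $e'$ afterwards) while $A'$ omits the second vertex of the path from $r$ to $r'$, which lies in $A$ — contradicting $|A'|=|A|$. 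So $r,r'$ are adjacent and the joining edge has weight $\frac{n+1}{2}$; as the set of balancing roots is then a clique and $T$ has no triangle, there are at most two. Conversely, if $n$ is odd and $W_r(r')=\frac{n+1}{2}$ for a child $r'$ of $r$, the remaining children of $r$ have total weight $\frac{n-1}{2}$, so all are light and $(T,r)$ is balanced; in $(T,r')$ the child $r$ has weight $\frac{n+1}{2}$ and every other child $u$ has $W_{r'}(u)=W_r(u)\le\frac{n+1}{2}-1$, so $(T,r')$ is balanced as well.

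The main obstacle is the distance-$\ge2$ step: one has to keep track of how the two ``even-split'' edges along the $r$-to-$r'$ path nest and extract the contradiction from the strict inclusion $A'\subsetneq A$. The rest is bookkeeping with the additivity identity and the edge form of balancedness.
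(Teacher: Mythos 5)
Your proposal is correct and takes essentially the same route as the paper: the identical extremal argument for existence (minimize the maximum child weight and rebalance toward the unique heavy child), the same additivity identity $W_r(v)+W_v(r)=n+1$ driving uniqueness, and the same computation for the converse. The only divergence is cosmetic: where the paper forces adjacency in one line from $W_r(v) > W_r(r') = \frac{n+1}{2}$ (with $v$ the parent of $r'$ in $(T,r)$) and caps the number of balancing roots by noting the children of $r$ have total weight $n$, you argue via the two end edges of the $r$--$r'$ path with the strict inclusion $A'\subsetneq A$ and then invoke the no-triangle property of trees --- both valid, just slightly longer.
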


\begin{proof}
Choose a vertex $r\in V_T$ such that the quantity $max\{W_r(w) \mid w \in c_r(r)\}$ is minimized.  Suppose that $(T,r)$ is not balanced.  Choose the (necessarily unique) vertex $v \in c_r(r)$ such that $W_r(v) > \lfloor \frac{n+1}{2} \rfloor$, and consider the tree $(T,v)$.  We will show that $W_v(w) < W_r(v) = max\{W_r(w) \mid w \in c_r(r)\}$ for all $w \in c_v(v)$, which will contradict the minimality of $r$.

For all $w \in c_v(v)-\{r\}$, we have that $W_v(w) = W_r(w) < W_r(v)$.  Finally,
\begin{align*}
W_v(r) & = 1 + \sum_{w \in c_v(r)} W_v(w)  = 1 + \sum_{w \in c_r(r)- \{v\}} W_r(w) = n+1 - W_r(v)\\
& < n+1 - \lfloor \frac{n+1}{2} \rfloor = \lceil \frac{n+1}{2} \rceil
\end{align*}
hence $W_v(r) \le \lfloor \frac{n+1}{2} \rfloor < W_r(v)$.  We have obtained our contradiction, thus $(T, r)$ is balanced.

Next, suppose $(T, r)$ and $(T, r')$ are balanced, with $r \neq r'$.  Let $v$ be the parent of $r'$ with respect to $(T, r)$.  Then, since $(T, r')$ is balanced, we have that
\begin{align*}
W_r(r') & = 1 + \sum_{w \in c_r(r')} W_r(w)  = 1 + \sum_{w \in c_{r'}(r')- \{v\}} W_{r'}(w) = n+1 - W_{r'}(v)\\
& \ge n+1 - \lfloor \frac{n+1}{2} \rfloor = \lceil \frac{n+1}{2} \rceil
\end{align*}
But since $(T, r)$ is balanced, we have that $W_r(r') \le \lfloor \frac{n+1}{2} \rfloor \le \lceil \frac{n+1}{2} \rceil$.  Thus $\lfloor \frac{n+1}{2} \rfloor = \lceil \frac{n+1}{2} \rceil$, hence $n$ is odd.  Furthermore, $W_r(v) > W_r(r') = \frac{n+1}{2}$, which implies that $v = r$.  Thus $r$ and $r'$ are adjacent, and the edge between them has weight $W_r(r') = \frac{n+1}{2}$.  Since the total weight of the children of $r$ is $n$, there can be no other children of weight $\frac{n+1}{2}$, hence $r$ and $r'$ are the only balancing vertices of $T$.

For the final statement, suppose $r$ and $r'$ are adjacent vertices in $T$ such that $W_r(r') = \frac{n+1}{2}$.  The other children of $r$ have weight at most $n - \frac{n+1}{2} = \frac{n-1}{2}$, hence $(T, r)$ is balanced.  We have already seen that $W_{r'}(r) = n+1 - W_{r}(r') = \frac{n+1}{2}$; a symmetric argument then shows that $(T, r')$ is also balanced.
\end{proof}

\begin{remark}
One can find the balancing root(s) of a tree $T$ via a simple algorithm:  Pick an arbitrary vertex $r$ as the root.  If the tree is not balanced, rebalance the tree in the (unique) direction of the highest weighted child of $r$, until the tree is balanced.  If the balancing root has an incident edge of weight $\frac{n+1}{2}$, then both vertices incident to this edge are balancing roots.
\end{remark}


\section{The Action of Perverse Equivalences on a Category Admitting Minimal Extensions}
\label{Action}

\subsection{Basic Definitions}

Let $(\mathcal{T}, \Sigma)$ be a $k$-linear, Hom-finite, $w$-Calabi-Yau triangulated category, for some integer $w< 0$.  We also assume that $\mathcal{T}$ is Krull-Schmidt, i.e. every object in $\mathcal{T}$ is isomorphic to a direct sum of objects with local endomorphism rings.  Suppose that $\mathcal{T}$ admits $|w|$-orthogonal maximal extensions.  We fix a positive integer $n$.

\begin{definition}
\label{simple-minded tuple}
Let $\widehat{\mathcal{E}}$ be the set of all $|w|$-orthogonal $n$-tuples of objects of $\mathcal{T}$ (up to isomorphism).  Let $\mathcal{E}$ be the subset of all $n$-tuples which form a $|w|$-basis.

We shall refer to elements of $\widehat{\mathcal{E}}$ as \textbf{orthogonal tuples}.  Elements of $\mathcal{E}$ will be referred to as \textbf{generating tuples}.
\end{definition}

Note that if $(X_i)_i \in \mathcal{E}$ (resp. $\widehat{\mathcal{E}}$) then $(\Sigma^mX_{\sigma(i)})_i \in \mathcal{E}$ (resp. $\widehat{\mathcal{E}}$) for any $m \in \mathbb{Z}$ and any $\sigma \in \mathfrak{S}_n$.  We define an equivalence relation $\sim$ on $\mathcal{E}$ (resp. $\widehat{\mathcal{E}}$) by $(X_i)_i \sim (Y_i)_i$ if there exists $m \in \mathbb{Z}, \sigma \in \mathfrak{S}_n$ such that $Y_i = \Sigma^m X_{\sigma(i)}$ for each $i$.  Since we are interested in classifying and counting the members of $\mathcal{E}$, it will frequently be helpful to work modulo these symmetries.

\begin{remark}
In practice, the triangulated category $\mathcal{T}$ will usually arise from some category of modules over an algebra $A$; in this case, the number of simple $A$-modules is a natural choice for $n$.  For this reason, we suppress the dependence of $\mathcal{E}$ on the choice of $n$ in our notation.
\end{remark}

We now introduce some terminology that will be convenient throughout the rest of this paper:

\begin{definition}
\label{elementary}
Let $X \in Ob(\mathcal{T})$.  We say $X$ is \textbf{elementary} if
\begin{equation*}
dim \Hom(X, \Sigma^{-m}X) = \delta_{0 = m}
\end{equation*}
 for all $0 \le m < |w|$.
\end{definition}

\begin{definition}
Let $X, Y$ be elementary objects of $\mathcal{T}$.  We say $X$ and $Y$ are \textbf{independent} if
\begin{equation*}
dim\Hom(X, \Sigma^{-m}Y) = dim\Hom(Y, \Sigma^{-m}X) = \delta_{0 = m}\delta_{X \cong Y}
\end{equation*}
for all $0 \le m < |w|$.
\end{definition}

Thus an orthogonal tuple is a tuple of distinct elementary objects which are pairwise independent.

Let $\mathcal{P}'(n)$ denote the set of proper subsets of $[n] := \{1, \cdots n\}$.  The symmetric group $\mathfrak{S}_n$ acts on $\mathcal{P}'(n)$ in the obvious way.

\begin{definition}
\label{generator action}
Define an action of $\Xi := Free(\mathcal{P}'(n)) \rtimes \mathfrak{S}_n$ on $\mathcal{E}, \widehat{\mathcal{E}}$ as follows:\\
1)  $\mathfrak{S}_n$ acts on $(X_i)_i \in \widehat{\mathcal{E}}$ by permutation of indices.\\
2)  Given $S \in \mathcal{P}'(n)$, $(X_i)_i \in \widehat{\mathcal{E}}$, define $S\cdot (X_i)_i = (X_i')_i$ by\\
\begin{equation*}
X_i' = \begin{cases}
X_i & i \in S\\
\Sigma^{-1}(X_i)_S & i \notin S
\end{cases}
\end{equation*}
where $(X_i)_S$ is the minimal extension of $X_i$ by $\mathcal{S} := \langle X_s \mid s \in S \rangle$.\\
3)  Define $S^{-1}\cdot (X_i)_i = (X_i')_i$ by\\
\begin{equation*}
X_i' = \begin{cases}
X_i & i \in S\\
\Sigma(X_i)^S & i \notin S
\end{cases}
\end{equation*}
where $(X_i)^S$ is the minimal $\mathcal{S}$-extension by $X_i$.
\end{definition}

Rouquier and Chuang \cite{chuang2017perverse} defined the above action on tilting complexes in the derived category of a finite-dimensional symmetric algebra (Section 5.2), as well as for bases of (-1)-Calabi-Yau categories (Section 7).  

We must show that this action is well-defined on both sets.

\begin{proposition}
\label{action justification}
The action of $\Xi$ on $\mathcal{\widehat{E}}$ is well-defined.
\end{proposition}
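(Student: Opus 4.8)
The plan is to strip away the group-theoretic packaging and reduce the statement to a few concrete facts about the generators of $\Xi$. Since $\Xi = Free(\mathcal{P}'(n)) \rtimes \mathfrak{S}_n$ and the free factor is free on $\mathcal{P}'(n)$, a well-defined action of $\Xi$ on $\widehat{\mathcal{E}}$ amounts to three things: (i) each $S \in \mathcal{P}'(n)$ acts by a map $S\cdot(-) \colon \widehat{\mathcal{E}} \to \widehat{\mathcal{E}}$; (ii) the map prescribed for $S^{-1}$ in Definition~\ref{generator action} is a two-sided inverse of $S\cdot(-)$; and (iii) the semidirect-product relation $\sigma \circ (S\cdot) \circ \sigma^{-1} = (\sigma(S))\cdot$ holds for $\sigma \in \mathfrak{S}_n$. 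The permutation action of $\mathfrak{S}_n$ is visibly well-defined, and since maximal extensions are unique up to unique isomorphism (\cite{chuang2017perverse}, Lemma 3.29) everything descends to isomorphism classes. Claim (iii) is immediate from Definition~\ref{generator action}, since $(X_i)_S$ depends only on $X_i$ and on the extension-closed subcategory $\mathcal{S} = \langle X_s \mid s \in S\rangle$, and relabelling indices carries both to their $\sigma(S)$-counterparts. Finally, passing to the opposite category $\mathcal{T}^{op}$ --- which is again $w$-Calabi-Yau, with $\Sigma^{-1}$ in place of $\Sigma$, and which interchanges the two kinds of maximal extension --- identifies $S^{-1}\cdot(-)$ on $\mathcal{T}$ with $S\cdot(-)$ on $\mathcal{T}^{op}$, so it is enough to show that $S\cdot(-)$ sends $\widehat{\mathcal{E}}$ into itself and that $S^{-1}\cdot S\cdot(-) = \mathrm{id}$.

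Fix $(X_1,\dots,X_n) \in \widehat{\mathcal{E}}$, $S \in \mathcal{P}'(n)$, and set $\mathcal{S} = \langle X_s \mid s \in S\rangle$. First I would record two structural facts. The subcategory $\mathcal{S}$ is extension-closed, and a double induction on extension length (base case the $|w|$-orthogonality of the $X_i$) gives $\Hom(Z, \Sigma^{-m}Z') = 0$ for all $Z, Z' \in \mathcal{S}$ and $1 \le m \le |w|-1$. For $j \notin S$ we have $\Hom(X_j, X_s) = \Hom(X_s, X_j) = 0$ for all $s \in S$, hence $X_j \in {}^\perp\mathcal{S} \cap \mathcal{S}^\perp$; so (using the standing hypothesis that $\mathcal{T}$ admits $|w|$-orthogonal maximal extensions) the maximal extension $f \colon (X_j)_S \to X_j$ exists, and Proposition~\ref{orthogonal extensions} yields $\Sigma^{-1}C(f) \in \mathcal{S}$ and $\Hom((X_j)_S, \mathcal{S}) = \Hom((X_j)_S, \Sigma\mathcal{S}) = 0$ (dually for the maximal $\mathcal{S}$-extension by $X_j$). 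Writing $X_i' = X_i$ for $i \in S$ and $X_i' = \Sigma^{-1}(X_i)_S$ for $i \notin S$, I would then verify $\dim \Hom(X_i', \Sigma^{-m}X_j') = \delta_{i=j}\delta_{m=0}$ for $0 \le m \le |w|-1$ by cases on whether $i, j \in S$. The case $i, j \in S$ is trivial; in the mixed cases one applies the long exact sequence of $\Hom(X_i', -)$ or $\Hom(-, \Sigma^{-m}X_j')$ to the triangle $(X_k)_S \to X_k \to C \to \Sigma(X_k)_S$ for the index $k \notin S$ and uses $\Sigma^{-1}C \in \mathcal{S}$ with the vanishing facts above; and in the case $i, j \notin S$ one cancels the $\Sigma^{-1}$'s and computes $\Hom((X_i)_S, \Sigma^{-m}(X_j)_S)$ via the long exact sequences of both defining triangles, identifying it with $\Hom(X_i, X_j) = \delta_{i=j}k$ when $m = 0$ and with $0$ when $1 \le m \le |w|-1$. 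Whenever $\Sigma^{-m}$ strays outside the orthogonality window $\{0, \dots, |w|-1\}$ in these computations --- and throughout the degenerate case $|w| = 1$, where the window is empty --- the needed vanishing is supplied instead by the Serre duality isomorphism $\Hom(X, \Sigma^{-m}Y) \cong \Hom(Y, \Sigma^{w+m}X)^*$. Distinctness of the $X_i'$ is then automatic, since $\End(X_i') \cong k$ while $\Hom(X_i', X_j') = 0$ for $i \ne j$.

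For the inverse relation, I would argue that for $i \notin S$, rotating the triangle of $f\colon (X_i)_S \to X_i$ and applying $\Sigma^{-1}$ produces a morphism $X_i' \to \Sigma^{-1}X_i$ with cone $\Sigma^{-1}C(f) \in \mathcal{S}$; the vanishing $\Hom(\mathcal{S}, X_i) = \Hom(\mathcal{S}, \Sigma^{-1}X_i) = 0$ identifies it as a maximal $\mathcal{S}$-extension by $X_i' = \Sigma^{-1}(X_i)_S$, so by uniqueness (\cite{chuang2017perverse}, Lemma 3.29; note $X_i' \in {}^\perp\mathcal{S} \cap \mathcal{S}^\perp$ by the above) we get $(X_i')^S = \Sigma^{-1}X_i$, hence $\Sigma(X_i')^S = X_i$; the composite in the other order is dual. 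I expect the main obstacle to be the orthogonality check when $i, j \notin S$ --- the only place where the defining triangle, the internal orthogonality of $\mathcal{S}$, and Serre duality must all be combined --- together with the bookkeeping of which degrees $m$ need Serre duality rather than direct orthogonality; in the extreme case $|w| = 1$ the whole argument should collapse to the $(-1)$-Calabi-Yau case of Chuang and Rouquier.
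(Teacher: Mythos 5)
Your proposal is correct and follows essentially the same route as the paper's proof: closure of $\widehat{\mathcal{E}}$ under the generator $S$ is checked by running long exact sequences over the defining triangles of the maximal extensions (with Serre duality supplying the vanishing outside the orthogonality window), the $S^{-1}$ case is handled by duality, and the inverse relation $(\Sigma^{-1}(X_j)_S)^S \cong \Sigma^{-1}X_j$ is verified via the characterization and uniqueness of maximal extensions from Proposition~\ref{orthogonal extensions}. Your extra bookkeeping (explicit internal orthogonality of $\mathcal{S}$, the semidirect-product compatibility, and the opposite-category formulation of duality) only makes explicit what the paper leaves implicit.
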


\begin{proof}
It suffices to show that the action of $Free(\mathcal{P}'(n))$ on $\widehat{\mathcal{E}}$ is well-defined.

Take $(X_i)_i \in \widehat{\mathcal{E}}$, $S \subsetneq [n]$.  
By assumption, the minimal extension $(X_j)_S$ exists for each $j \notin S$; we now verify that the tuple $S\cdot (X_i)_i$ is $|w|$-orthogonal.  Let $\mathcal{S}= \langle \{X_i \mid i \in S\} \rangle$.  

Fix $j \notin S$.  Let $f_j:  (X_j)_S \rightarrow X_j$ be the morphism defining the extension.  Let $Y \in \mathcal{S}$.

We claim that, for all $0 \le m \le |w|$,
\begin{equation}
\label{aj1}
\Hom(\Sigma^{-1}(X_j)_S, \Sigma^{-m}Y) = 0
\end{equation}
By Proposition \ref{orthogonal extensions}, Equation (\ref{aj1}) holds for $m = 0, 1$.  For $2 \le m \le |w|$, apply $\Hom(-, \Sigma^{-m+1}Y)$ to the triangle $\Sigma^{-1}C(f_j)\rightarrow (X_j)_S \xrightarrow{f_j} X_j \rightarrow C(f_j)$.  Since $(X_i)_i$ is $|w|$-orthogonal, $\Hom(X_j, \Sigma^{-m+1}Y) = 0$.  Similarly, $\Hom(\Sigma^{-1}C(f_j), \Sigma^{-m+1}Y) = 0$, since $\Sigma^{-1}C(f_j) \in \mathcal{S}$.  It follows that $\Hom((X_j)_S, \Sigma^{-m+1}Y) = 0$, hence $\Hom(\Sigma^{-1}(X_j)_S, \Sigma^{-m}Y)=0$.

Next, we claim that, for all $0 \le m \le |w|$,
\begin{equation}
\label{aj2}
\Hom(Y, \Sigma^{-m-1}(X_j)_S) =0
\end{equation}
By Serre duality and Equation (\ref{aj1}),
\begin{align*}
\Hom(Y, \Sigma^{-m-1}(X_j)_S) & \cong \Hom(\Sigma^{-m-1}(X_j)_S, \Sigma^{w}Y)^*\\
& \cong \Hom(\Sigma^{-1}(X_j)_S, \Sigma^{m+w}Y)^*\\
& = 0
\end{align*}
for all $0 \le m \le |w|$.


Let $j, l\notin S$.  We claim that, for all $0 \le m \le |w|-1$,
\begin{equation}
\label{aj4}
\Hom(\Sigma^{-1}(X_l)_S, \Sigma^{-m-1}(X_j)_S)\cong \Hom(\Sigma^{-1}X_l, \Sigma^{-m-1}X_j)
\end{equation}
Apply $\Hom(\Sigma^{-1}(X_l)_S, -)$ to the triangle $\Sigma^{-m-2}C(f_j) \rightarrow \Sigma^{-m-1}(X_j)_S \rightarrow \Sigma^{-m-1}X_j \rightarrow \Sigma^{-m-1} C(f_j)$.  $\Sigma^{-1}C(f_j) \in \mathcal{S}$, so by (\ref{aj1}), we have for all $0 \le m \le |w|-1$,
\begin{equation*}
\Hom(\Sigma^{-1}(X_l)_S, \Sigma^{-m-2}C(f_j)) = 0 = \Hom(\Sigma^{-1}(X_l)_S, \Sigma^{-m-1}C(f_j))
\end{equation*}
Thus $\Hom(\Sigma^{-1}(X_l)_S, \Sigma^{-m-1}(X_j)_S) \cong \Hom(\Sigma^{-1}(X_l)_S, \Sigma^{-m-1}X_j)$.  Next, apply $\Hom(-, \Sigma^{-m-1}X_j)$ to the triangle $\Sigma^{-2}C(f_l) \rightarrow \Sigma^{-1}(X_l)_S \rightarrow \Sigma^{-1}X_l$ $\rightarrow \Sigma^{-1}C(f_l)$.  $\Sigma^{-1}C(f_l) \in \mathcal{S}$, so for all $0 \le m \le |w|-1$,
\begin{equation*}
\Hom(\Sigma^{-2}C(f_l), \Sigma^{-m-1}X_j) = 0 = \Hom(\Sigma^{-1}C(f_l), \Sigma^{-m-1}X_j)
\end{equation*}
Thus $\Hom(\Sigma^{-1}(X_l)_S, \Sigma^{-m-1}X_j) \cong \Hom(\Sigma^{-1}X_l, \Sigma^{-m-1}X_j)$.  Combining the two isomorphisms, we obtain the desired equality.

Substituting $j = l$ into Equation (\ref{aj4}) and using the fact that $X_j$ is elementary, we have that $\Sigma^{-1}(X_j)_S$ is elementary.  When $l \neq j$, independence of $\Sigma^{-1}(X_l)_S$ and $\Sigma^{-1}(X_j)_S$ follows from Equation (\ref{aj4}) and the independence of $X_l$ and $X_j$.  When $j \notin S$ and $l \in S$, independence of $\Sigma^{-1}(X_j)_S$ and $X_l$ follows from Equations (\ref{aj1}) and (\ref{aj2}).  Thus $\widehat{\mathcal{E}}$ is closed under the action of $S \subsetneq [n]$.

The proof that $\widehat{\mathcal{E}}$ is closed under the action of $S^{-1}$ is dual.

Finally, we must show that the action of $S$ and $S^{-1}$ are mutually inverse.  To show $S^{-1}S \cdot (X_i)_i = (X_i)_i$, it is enough to show that for each $j\notin S$, $(\Sigma^{-1}(X_j)_S)^{S} \cong \Sigma^{-1}X_j$.  It is easy to verify that the map $\Sigma^{-1}(X_j)_S \rightarrow \Sigma^{-1}X_j$ satisfies the conditions of Proposition \ref{orthogonal extensions}, hence $\Sigma^{-1}X_j$ is isomorphic to $(\Sigma^{-1}(X_j)_S)^{S}$.  The proof that $SS^{-1}\cdot (X_i) = (X_i)$ is dual.
\end{proof}

\subsection{Filtrations}

In the previous section, we proved that $\widehat{\mathcal{E}}$ is stable under the action of $\Xi$.  In this section, we show that the subset $\mathcal{E}$ is stable under this action.  To accomplish this, we will need a few technical results.

\begin{definition}
Let $\mathcal{F}$ be a $|w|$-basis for $\mathcal{T}$.  Let $M \in \mathcal{T}$.  A \textbf{descending $\mathcal{F}$-filtration} is a sequence of morphisms $f_i:  M_i \rightarrow M_{i-1}$, $1 \le i \le m$, with $M_m = M$, $M_0 = 0$ and $\Sigma^{-1}C(f_i) = \Sigma^{-d_i}S_i$ for some $S_i \in \mathcal{F}$ and $1 \le d_i < |w|$.  We say this filtration is \textbf{nice} if the sequence $\{d_i\}$ is non-strictly decreasing.

Dually, define an \textbf{ascending $\mathcal{F}$-filtration} to be a sequence of morphisms $f_i:M_{i-1}$ $\rightarrow M_{i}$, $1 \le i \le m$, with $M_0 = 0$, $M_m = M$, and $C(f_i) = \Sigma^{-d_i}S_i$ for some $S_i \in \mathcal{F}$.  We say this filtration is \textbf{nice} if the sequence $\{d_i\}$ is non-strictly increasing.  For both filtrations, we shall call $m$ the \textbf{length} of the filtration.

Given a descending (resp., ascending) filtration of $M$, we shall write $M = [\Sigma^{-d_m}S_m, \cdots \Sigma^{-d_1}S_1]_d$ (resp., $M = [\Sigma^{-d_1}S_1, \cdots \Sigma^{-d_m}S_m]_a$).  If $m$ is minimal, we shall refer to $m$ as the descending (resp., ascending) \textbf{length} of $M$, which we shall denote by $l_d(M)$, (resp., $l_a(M)$).  We shall drop the modifiers and subscripts when there is no risk of confusion and refer simply to ``lengths'' and ``filtrations''.

We shall refer to the $\Sigma^{-d_i}S_i$ as the \textbf{factors} of $M$.  If a factor appears as the right-most (resp., left-most) term in a nice, minimal descending (resp., ascending) filtration of $M$, we say that factor \textbf{lies in the head} (resp., \textbf{socle}), of $M$.
\end{definition}

Intuitively, filtrations provide a triangulated analogue of composition series.  An object may have many different filtrations relative to a given basis, but filtrations of minimal length are relatively well-behaved.  The following lemma is adapted from \cite{chuang2017perverse}, Lemma 7.1.

\begin{lemma}
\label{filtration}
Let $\mathcal{F}$ be a $|w|$-basis for $\mathcal{T}$.  Let $M \in \mathcal{T}$.  Then:\\
1)  $M$ has a descending $\mathcal{F}$-filtration $M = [\Sigma^{-d_m}S_m, \cdots, \Sigma^{-d_1}S_1]$ which is both nice and of minimal length.  Given any minimal filtration of $M$, there is a nice, minimal filtration of with the same multiset of factors.\\
2)  Any two minimal filtrations of $M$ have the same multiset of factors.\\
3)  Using the notation of part 1), if $\Hom(M, \Sigma^{-d_1}S) \neq 0$ for some $S \in \mathcal{F}$, then $\Sigma^{-d_1}S$ is isomorphic to one of the factors of $M$, and $\Sigma^{-d_1}S$ lies in the head of $M$.\\
4)  For any nice, minimal descending filtration, $M = [\Sigma^{-d_m}S_m, \cdots, \Sigma^{-d_1}S_1]$, the composition $M = M_m \rightarrow \cdots \rightarrow M_1 \cong \Sigma^{-d_1}S_1$ is nonzero.\\
The dual statements hold for ascending filtrations.
\end{lemma}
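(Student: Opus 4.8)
The plan is to prove parts 1)–4) in sequence, since the later parts lean on the earlier ones, and to derive the ascending statements by passing to the opposite category. Throughout, I will use freely that $\mathcal{F}$ is a $|w|$-basis, so $\mathcal{T} = \langle \{\Sigma^{-m}S \mid S \in \mathcal{F}, 0 \le m < |w|\}\rangle$ and the factors $\Sigma^{-m}S$ for $S \in \mathcal{F}$, $0 \le m < |w|$, are pairwise "orthogonal" in the range dictated by $|w|$-orthogonality and Serre duality.

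\emph{Existence of a filtration, and part 1).} First I would observe that every $M$ admits \emph{some} descending $\mathcal{F}$-filtration: the class of objects admitting one contains each $\Sigma^{-m}S$ (via the triangle $0 \to \Sigma^{-m}S \xrightarrow{\sim} \Sigma^{-m}S$) and is closed under extensions (given $M' \to M \to M''$ with filtrations of $M'$ and $M''$, splice them, using that a cone over a composite can be built from the cones of the factors — this is the standard octahedral bookkeeping), and hence equals all of $\mathcal{T}$ by definition of $\langle - \rangle$. Among all minimal-length filtrations of $M$, I then want to produce a \emph{nice} one and, more strongly, to nicify any given minimal filtration without changing the multiset of factors. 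The key local move is: if two consecutive factors $\Sigma^{-d_{i+1}}S_{i+1}$ and $\Sigma^{-d_i}S_i$ appear with $d_{i+1} < d_i$ (out of decreasing order), then one can swap them. This is where I expect the main work: one must show the relevant connecting map $M_{i+1} \to M_{i-1}$ factors appropriately, i.e. that $\Hom(\Sigma^{-d_{i+1}}S_{i+1}, \Sigma^{-d_i+1}S_i) = 0$ so the extension splits in the needed direction, yielding a new length-$m$ filtration with the two factors transposed. The vanishing follows from $|w|$-orthogonality together with Serre duality: $\Hom(\Sigma^{-d_{i+1}}S_{i+1}, \Sigma^{-d_i+1}S_i) \cong \Hom(S_{i+1}, \Sigma^{d_{i+1}-d_i+1}S_i)$, and since $0 < d_i - d_{i+1} + (-1) + 1 \le \ldots$ one checks the shift lands in a range where either direct $|w|$-orthogonality or its Serre dual forces zero — I would spell out the two cases ($1 \le d_i - d_{i+1} \le |w|-1$ directly; the complementary range via $\Sigma^w$ being a Serre functor). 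A finite number of adjacent transpositions (a bubble sort) then produces a nice filtration of the same length with the same factors. Applying this to a minimal filtration gives the second sentence of 1); and since a nice minimal filtration in particular exists, the first sentence follows.

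\emph{Parts 2)–4).} For part 2), I would argue that the multiset of factors of a minimal filtration is determined by $M$ via an Euler-characteristic / dimension count: for each $S \in \mathcal{F}$ and each $0 \le d < |w|$, the multiplicity of $\Sigma^{-d}S$ among the factors equals an alternating sum of $\dim\Hom(\Sigma^{-e}S', M)$ over $S' \in \mathcal{F}$ and appropriate $e$ — one gets this by inducting along the filtration, applying $\Hom(\Sigma^{-e}S', -)$ to each triangle and using $|w|$-orthogonality to see exactly which factor contributes to which $\Hom$-group. Since the right-hand side depends only on $M$, so does the multiset. (Alternatively, 2) drops out of the more careful version of the nicifying argument plus a head-induction, but the numerical argument is cleanest.) For part 3), suppose $M = [\Sigma^{-d_m}S_m, \ldots, \Sigma^{-d_1}S_1]$ is nice and minimal and $\Hom(M, \Sigma^{-d_1}S) \neq 0$. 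Applying $\Hom(-, \Sigma^{-d_1}S)$ along the filtration and using niceness ($d_i \ge d_1$ for all $i$, so each $\Hom(\Sigma^{-d_i}S_i, \Sigma^{-d_1}S)$ and $\Hom(\Sigma^{-d_i}S_i, \Sigma^{-d_1+1}S)$ is controlled by $|w|$-orthogonality), a short induction shows $\Hom(M, \Sigma^{-d_1}S) \hookrightarrow \Hom(M_1, \Sigma^{-d_1}S) = \Hom(\Sigma^{-d_1}S_1, \Sigma^{-d_1}S)$, which is nonzero only if $S \cong S_1$; by part 2) this factor is then among the factors of $M$ and by construction lies in the head. For part 4), the composition $M_m \to \cdots \to M_1$ being zero would, by the triangles, force $M_m$ to be a direct summand of a shorter filtered object, contradicting minimality $l_d(M) = m$; more precisely I would show that if the composite is zero then $M_1 \cong \Sigma^{-d_1}S_1$ splits off $M_2$ as $M_2 \cong M_1 \oplus (\text{cone of }f_2\text{-type term})$ inductively, eventually exhibiting a length-$(m-1)$ filtration of $M$, the desired contradiction. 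Finally, the dual statements for ascending filtrations follow by applying everything proved so far in $\mathcal{T}^{\mathrm{op}}$, which is again $w$-Calabi-Yau with the roles of $\Hom(-,-)$ swapped; ascending $\mathcal{F}$-filtrations in $\mathcal{T}$ are precisely descending $\mathcal{F}$-filtrations in $\mathcal{T}^{\mathrm{op}}$, and "head" becomes "socle".

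The main obstacle, as indicated, is the nicifying step: getting the adjacent-transposition lemma exactly right — both the precise $\Hom$-vanishing (tracking the interaction of the shift offsets with the range $0 \le m < |w|$ and its Serre dual) and the octahedral argument that the transposed sequence is still a genuine filtration of the \emph{same} object with the \emph{same} length. Once that lemma is in hand, parts 2)–4) are comparatively formal dimension-counting and minimality arguments.
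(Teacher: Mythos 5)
Your overall architecture (nicify by adjacent transpositions via the octahedron, then deduce 2)--4), dualize for ascending filtrations) matches the paper's, but the two steps you flag as "the main work" contain genuine errors rather than just unfinished bookkeeping. First, the transposition step: your plan is to show the relevant connecting map vanishes because its $\Hom$-space is zero by $|w|$-orthogonality and Serre duality. That is false in a boundary case. The connecting map produced by the octahedron applied to $M_i \to M_{i-1} \to M_{i-2}$ lives in $\Hom(\Sigma^{-d_{i-1}+1}S_{i-1}, \Sigma^{-d_i+2}S_i) \cong \Hom(S_{i-1}, \Sigma^{d_{i-1}-d_i+1}S_i)$, and when the inversion is $d_i = d_{i-1}+1$ with $S_i \cong S_{i-1}$ (consecutive factors $\Sigma^{-d-1}S$, $\Sigma^{-d}S$) this space is one-dimensional, not zero. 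Orthogonality only tells you the map is zero \emph{or an isomorphism}; to kill the isomorphism case you must invoke minimality of the filtration (an isomorphism forces $C(f_{i-1}f_i)=0$, so two terms could be deleted). Without that extra argument the splitting of $C(f_{i-1}f_i)$, and hence the whole swap, does not go through. (You also state the inversion condition as $d_{i+1}<d_i$, which with the paper's convention of "non-strictly decreasing in $i$" is the \emph{nice} configuration, not the one needing repair; worth straightening out.)

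Second, part 3): the injection $\Hom(M,\Sigma^{-d_1}S) \hookrightarrow \Hom(M_1,\Sigma^{-d_1}S)$ does not exist --- $\Hom(-,\Sigma^{-d_1}S)$ applied to $M=M_m\to\cdots\to M_1$ produces maps from $\Hom(M_1,-)$ \emph{towards} $\Hom(M_m,-)$, not the reverse, and they are not all isomorphisms (repeated factors obstruct this). Moreover your conclusion $S\cong S_1$ is simply false: take $M=\Sigma^{-d_1}S_1\oplus\Sigma^{-d_1}S$ with $S\not\cong S_1$. The correct argument takes $r$ minimal with $\Hom(M_r,\Sigma^{-d_1}S)\neq 0$, shows this space injects into $\Hom(\Sigma^{-d_r}S_r,\Sigma^{-d_1}S)$, deduces $d_r=d_1$ and $S_r\cong S$, and then --- this part is entirely missing from your sketch --- splits off that factor and cyclically permutes the filtration to exhibit it in the head. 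Your primary route to 2) (an Euler-characteristic count of multiplicities) is also unreliable, because $\Hom(\Sigma^{-e}S',\Sigma^{-d}S)$ is uncontrolled once the shift leaves the window $[-|w|,0]$, so the long exact sequences do not reduce to a clean alternating sum; your parenthetical alternative (head-induction using 3) and 4)) is the route that actually works and is what the paper does. Part 4) and the dualization are fine in spirit.
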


\begin{proof}
For 1), since $\mathcal{F}$ is a $|w|$-basis, every object of $\mathcal{T}$ has a finite $\mathcal{F}$-filtration, hence a minimal one.  Let $M = [\Sigma^{-d_m}S_m, \cdots, \Sigma^{-d_1}S_1]$ be one such minimal filtration.  If this filtration is not nice, there exists $i$ such that $d_i > d_{i-1}$.  Consider the following diagram, obtained from the octahedron axiom:
\begin{eqnarray*}
\begin{tikzcd}
M_{i} \arrow[rr, "f_{i}"] &[-20pt] & [-20pt] M_{i-1} \arrow[rr, "f_{i-1}"] \arrow[dl] & [-20pt] & [-20pt] M_{i-2} \arrow[dl] \arrow[ddll, bend left]\\
& \Sigma^{-d_i + 1}S_i \arrow[ul, dashed] \arrow[dr] & & \Sigma^{-d_{i-1} + 1}S_{i-1} \arrow[ll, "\phi", dashed, swap] \arrow[ul, dashed]&\\
& & C(f_{i-1} f_{i}) \arrow[ur] \arrow[uull, dashed, bend left] & &
\end{tikzcd}
\end{eqnarray*}

Since $d_i > d_{i-1}$, the morphism $\phi: \Sigma^{-d_{i-1}+1}S_{i-1} \rightarrow \Sigma^{-d_i + 2} S_i$ is either zero or an isomorphism.  If $\phi$ is an isomorphism, then $C(f_{i-1} f_{i}) = 0$, hence $f_{i-1}f_{i}$ is an isomorphism.  But then $M_{i}$ and $M_{i-1}$ can be deleted from the filtration, since the composite map $f_{i-1}f_if_{i+1}: M_{i+1} \rightarrow M_{i-2}$ has cone isomorphic to $C(f_{i+1})$.  (If $i=m$, one simply deletes the last two terms, since $M_{m-2} \cong M$.)  This contradicts minimality of $m$, hence we must have $\phi = 0$.

Since $\phi = 0$, $C(f_{i-1} f_{i}) \cong \Sigma^{-d_i + 1}S_i \oplus \Sigma^{-d_{i-1} + 1}S_{i-1}$.  Let $X$ be the cone of the composition $g: \Sigma^{-d_{i-1}}S_{i-1} \rightarrow \Sigma^{-1}C(f_{i-1} f_{i}) \rightarrow M_{i}$.  Let $f_{i}': M_{i} \rightarrow X$ be the natural map into the cone.  By construction, $\Sigma^{-1}C(f_{i}') = \Sigma^{-d_{i-1}}S_{i-1}$.  Furthermore, applying the octahedron axiom to $g$ yields a map $f_{i-1}': X \rightarrow M_{i-2}$ such that $\Sigma^{-1}C(f_{i-1}') = \Sigma^{-d_{i}}S_{i}$.  Thus replacing $f_{i}$ and $f_{i-1}$ with $f_{i}'$ and $f_{i-1}'$ yields a minimal filtration with $\Sigma^{-d_{i}}S_{i}$ and $\Sigma^{-d_{i-1}}S_{i-1}$ swapped.  We may repeat this process until there are no more inversions, yielding a nice, minimal filtration.  Since the factors have only been permuted, the multiset of factors remains unchanged.

For 3), let $1 \le r \le m$ be minimal such that there is a nonzero morphism $M_r \rightarrow \Sigma^{-d_1}S$.  Consider the triangle $\Sigma^{-d_r}S_r \rightarrow M_r \rightarrow M_{r-1} \rightarrow \Sigma^{-d_r +1}S_r $.  Since $\Hom(M_{r-1}, \Sigma^{-d_1}S) = 0$, the nonzero space $\Hom(M_r, \Sigma^{-d_1}S)$ injects into $\Hom(\Sigma^{-d_r}S_r, \Sigma^{-d_1}S)$.  Since the filtration is nice, $d_1 \ge d_r$; since the Hom space is nonzero, we deduce that $d_r = d_1$ and $S_r \cong S$.  It follows that the composition $\Sigma^{-d_r}S_r \rightarrow M_r \rightarrow \Sigma^{-d_1}S$ is an isomorphism, hence the above triangle splits and $M_r \cong \Sigma^{-d_1}S \oplus M_{r-1}$.

Define a new filtration of $M$ as follows.  For $1 \le i \le r-1$, let $M_i' = \Sigma^{-d_1}S \oplus M_{i-1}$ and let $f_i': M_i' \rightarrow M_{i-1}'$ be the direct sum of the identity map and $f_{i-1}$.  Since $M_r \cong \Sigma^{-d_1}S \oplus M_{r-1}$, we can define $f_r'$ in the same way.  Let all remaining objects and maps remain the same.  It is straightforward to verify that this is a filtration identical to the original, except that the last $r$ factors have been cyclically permuted, so that $\Sigma^{-d_r}S_r \cong \Sigma^{-d_1}S$ is the final term.  It is clear that this filtration is nice and minimal, hence $\Sigma^{-d_1}S$ lies in the head of $M$.

To prove 4), for each $1 \le i \le m$ let $g_i:  M_i \rightarrow M_1 \cong \Sigma^{-d_1}S_1$ be the natural composition.  Suppose for a contradiction that $g_m = 0$ and let $i \ge 2$ be minimal such that $g_i = 0$.  Decompose $g_i$ as $M_i \xrightarrow{g} M_2 \xrightarrow{f_2} M_1$ and apply the octahedron axiom.  We obtain a triangle $\Sigma^{-1}C(g) \rightarrow M_i \oplus \Sigma^{d_1-1}S_1 \rightarrow \Sigma^{-d_2}S_2 \rightarrow C(g)$.  We have that $\Sigma^{-1} C(g) = [\Sigma^{-d_i}S_i, \cdots \Sigma^{-d_3}S_3]$; by niceness of the filtration it follows that $\Hom(\Sigma^{-1}C(g), \Sigma^{-d_1-1}S_1) = 0$.  Therefore the morphism $\Sigma^{-1}C(g) \rightarrow M_i \oplus \Sigma^{d_1-1}S_1$ factors through the inclusion of $M_i$, hence the cone of this morphism is $M_2 \oplus \Sigma^{d_1-1}S_1 \cong \Sigma^{-d_2}S_2$.  This contradicts locality of $\End(S_2)$, thus $g_i \neq 0$ for all $i$.

The proof of 2) is by induction on the length, $m$, of $M$.  For $m = 0, 1$, the result is clear.  Suppose the result holds for all lengths less than $m$.  Given $M = [\Sigma^{-d_m}S_m, \cdots, \Sigma^{-d_1}S_1] = [\Sigma^{-d_m'}S_m', \cdots, \Sigma^{-d_1'}S_1']$ two minimal filtrations, by 1) we can rearrange the factors and assume WLOG that both filtrations are nice.  Then $\Hom(M, \Sigma^{-d}S) = 0$ for any $S \in \mathcal{F}$, $d_1 < d \le |w|-1$.  By 4), $\Hom(M, \Sigma^{-d_1'}S_1') \neq 0$, hence $d_1' \le d_1$.  A symmetric argument gives the reverse inequality, hence $d_1 = d_1'$.  By 3), $\Sigma^{-d_1}S_1 \cong \Sigma^{-d_r'}S_r'$ for some $r$, and we can rearrange the second filtration so that $\Sigma^{-d_1}S_1$ is the last term.  Since both filtrations now end in $\Sigma^{-d_1}S_1$, we obtain two nice, minimal filtrations of $M' = \Sigma^{-1}C(M \rightarrow \Sigma^{-d_1}S_1)$ whose factor multisets correspond to the original factor multisets with one copy of $\Sigma^{-d_1}S_1$ removed.  Applying the induction hypothesis to $M'$, we are done.
\end{proof}

The following technical lemma describes the interaction between filtrations and maximal extensions.

\begin{lemma}
\label{medium extension}
Let $\mathcal{F}$ be a $|w|$-basis for $\mathcal{T}$ and let $\mathcal{S} \subset \mathcal{F}$.  Let $T \in \mathcal{F} - \mathcal{S}$ and let $T_S \rightarrow T$ denote the maximal extension of $T$ by $\mathcal{S}$.  Suppose this map factors as $T_S \xrightarrow{f} N \xrightarrow{g} T$ for some object $N = [S_k, \cdots, S_2, T]_d$, with $S_i \in \mathcal{S}$.  Suppose that $\Hom(N, \mathcal{S}) = 0$.  Then $\Sigma^{-1} C(f) \in \langle \mathcal{S} \rangle$.

Dually, let $T \rightarrow T^S$ denote the maximal $\mathcal{S}$-extension by T.  Suppose this map factors as $T \xrightarrow{g} N \xrightarrow{f} T^S$ for some object $N = [T, S_2, \cdots, S_k]_a$, with $S_i \in \mathcal{S}$.  Suppose $\Hom(\mathcal{S}, N) = 0$.  Then $C(f) \in \langle \mathcal{S} \rangle$.
\end{lemma}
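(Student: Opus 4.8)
The plan is to apply the octahedral axiom to the factorization $T_S\xrightarrow{f}N\xrightarrow{g}T$ and reduce the assertion to showing that an induced map of ``first syzygies'' is an epimorphism in the length category generated by $\mathcal{S}$. Since $T$ and the $S_i$ are distinct members of a $|w|$-orthogonal tuple, $\Hom(T,\mathcal{S})=\Hom(\mathcal{S},T)=0$; by Definition \ref{maximal extension} the maximal extension $gf\colon T_S\to T$ of $T$ by $\mathcal{S}$ therefore satisfies $\Sigma^{-1}C(gf)\in\langle\mathcal{S}\rangle$, and the natural map $\Sigma^{-1}T\to\Sigma^{-1}C(gf)$ induces an isomorphism $\Hom(\Sigma^{-1}C(gf),S)\xrightarrow{\sim}\Hom(\Sigma^{-1}T,S)$ for each $S\in\mathcal{S}$. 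Iterating the octahedral axiom along the descending filtration $N=[S_k,\dots,S_2,T]_d$, with $g$ the projection onto the head factor $T$, exhibits $\Sigma^{-1}C(g)$ as an iterated extension of $S_2,\dots,S_k$, so $\Sigma^{-1}C(g)\in\langle\mathcal{S}\rangle$ as well.

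Applying the octahedral axiom (or TR3 followed by the $3\times 3$ lemma) to $T_S\xrightarrow{f}N\xrightarrow{g}T$ yields a morphism of triangles from $\bigl(\Sigma^{-1}C(gf)\to T_S\xrightarrow{gf}T\bigr)$ to $\bigl(\Sigma^{-1}C(g)\to N\xrightarrow{g}T\bigr)$ which is some $b\colon\Sigma^{-1}C(gf)\to\Sigma^{-1}C(g)$ in the first slot, $f$ in the second, and $\mathrm{id}_T$ in the third; consequently $C(f)\cong C(b)$, and the square formed by $b$ and the two connecting maps $\Sigma^{-1}T\to\Sigma^{-1}C(gf)$, $\Sigma^{-1}T\to\Sigma^{-1}C(g)$ commutes. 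As $\mathcal{S}$ belongs to a $|w|$-basis, its members are pairwise orthogonal bricks, so $\langle\mathcal{S}\rangle$ is an abelian length category with the $S_i$ as simple objects, closed in $\mathcal{T}$ under kernels, cokernels and direct summands. If $b$ is an epimorphism there, then $C(b)\cong\Sigma(\ker b)$, whence $\Sigma^{-1}C(f)\cong\ker b\in\langle\mathcal{S}\rangle$. So it remains to prove that $b$ is an epimorphism.

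For a map $b$ in a length category, $\operatorname{coker}b=0$ if and only if the precomposition map $\Hom(\Sigma^{-1}C(g),S)\xrightarrow{b^{*}}\Hom(\Sigma^{-1}C(gf),S)$ is injective for every simple $S$, because its kernel is $\Hom(\operatorname{coker}b,S)$ and a nonzero object of $\langle\mathcal{S}\rangle$ maps nontrivially onto some simple quotient. Postcomposing $b^{*}$ with the maximal-extension isomorphism $\Hom(\Sigma^{-1}C(gf),S)\xrightarrow{\sim}\Hom(\Sigma^{-1}T,S)$ and using the commuting square above, $b^{*}$ becomes the map $\Hom(\Sigma^{-1}C(g),S)\to\Hom(\Sigma^{-1}T,S)$ induced by $\Sigma^{-1}T\to\Sigma^{-1}C(g)$. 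Applying $\Hom(-,S)$ to the triangle $\Sigma^{-1}C(g)\to N\xrightarrow{g}T\to C(g)$, the kernel of this map is the image of $\Hom(N,S)$, which vanishes by the hypothesis $\Hom(N,\mathcal{S})=0$. Hence $b^{*}$ is injective for all $S\in\mathcal{S}$, $b$ is an epimorphism, and $\Sigma^{-1}C(f)\in\langle\mathcal{S}\rangle$.

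I expect the only real work to be bookkeeping: making the octahedral diagram precise so that $b$, the maximal-extension isomorphism, and the structure triangle of $N$ are honestly compatible (not merely up to sign), and checking the length-category properties of $\langle\mathcal{S}\rangle$ uniformly for all $w<0$ (the case $|w|=1$, Chuang and Rouquier's original setting, being mildly exceptional). The dual statement, about the maximal $\mathcal{S}$-extension $T\to T^{S}$ factoring through an ascending filtration $N=[T,S_2,\dots,S_k]_a$ with $\Hom(\mathcal{S},N)=0$, follows by carrying out the same argument in $\mathcal{T}^{\mathrm{op}}$, which is again $w$-Calabi--Yau, interchanging $\Hom(-,\mathcal{S})$ with $\Hom(\mathcal{S},-)$, descending filtrations with ascending ones, and $\Sigma$ with $\Sigma^{-1}$.
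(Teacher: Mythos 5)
Your first two-thirds — the octahedral triangle relating $C(f)$, $C(gf)$, $C(g)$, the memberships $\Sigma^{-1}C(gf),\ \Sigma^{-1}C(g) \in \langle\mathcal{S}\rangle$, and the injectivity of $b^*\colon \Hom(\Sigma^{-1}C(g),S) \to \Hom(\Sigma^{-1}C(gf),S)$ extracted from $\Hom(N,\mathcal{S})=0$ together with the maximal-extension isomorphism — is sound and is essentially the same set of ingredients the paper uses. The gap is the sentence asserting that, because the $S_i$ are pairwise orthogonal bricks, $\langle\mathcal{S}\rangle$ is an abelian length category with simples $S_i$, closed in $\mathcal{T}$ under kernels, cokernels and summands, and compatible with the triangulation so that an epimorphism $b$ in $\langle\mathcal{S}\rangle$ has $C(b) \cong \Sigma(\ker b)$. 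None of this is established in the paper, and it does not follow from pairwise orthogonality. Since $\mathcal{T}$ is $w$-Calabi--Yau, every $S \in \mathcal{S}$ has $\Hom(S,\Sigma^{w}S) \cong \End(S)^* \neq 0$, so there are nonzero negative-degree maps just outside the orthogonality window (for $|w|=1$ already $\Hom(S,\Sigma^{-1}S)\neq 0$); the standard heart-of-a-t-structure argument, which needs vanishing in all negative degrees, is therefore unavailable. Whether the extension closure of such a collection is a ``proper'' abelian subcategory — which is exactly what your two steps ``$\ker b^* = \Hom(\operatorname{coker}b,S)$'' and ``$b$ epi $\Rightarrow C(b)\cong\Sigma(\ker b)$'' require — is a delicate question, treated in separate literature on simple-minded systems under stronger hypotheses than ``subset of a $|w|$-orthogonal tuple,'' and it is certainly not disposable as bookkeeping. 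As written, the last third of your argument rests on an assertion at least as deep as the lemma being proved.

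The repair is to stay at the triangulated level and use the paper's Lemma \ref{filtration}, which is precisely its substitute for Jordan--H\"older-type reasoning. Rotating your triangle to $\Sigma^{-2}C(g) \to \Sigma^{-1}C(f) \to \Sigma^{-1}C(gf) \to \Sigma^{-1}C(g)$ shows that $\Sigma^{-1}C(f)$ has a filtration with factors in $\mathcal{S} \cup \Sigma^{-1}\mathcal{S}$; and your injectivity computation is, after rotating the same triangles, equivalent to the vanishing $\Hom(\Sigma^{-1}C(f),\Sigma^{-1}\mathcal{S})=0$, obtained by applying $\Hom(-,\Sigma^{-1}S)$ to $\Sigma^{-1}N \to \Sigma^{-1}C(f) \to T_S$ and using $\Hom(N,\mathcal{S})=0$ together with $\Hom(T_S,\Sigma^{-1}\mathcal{S})=0$. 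Parts 1), 3) and 4) of Lemma \ref{filtration} then show that a nice minimal filtration of $\Sigma^{-1}C(f)$ cannot have any factor in $\Sigma^{-1}\mathcal{S}$ (such a factor would sit in the head and force a nonzero map to $\Sigma^{-1}\mathcal{S}$), so all factors lie in $\mathcal{S}$ and $\Sigma^{-1}C(f) \in \langle\mathcal{S}\rangle$. This is the paper's proof; substituting it for your abelian-category step closes the gap without any appeal to $\langle\mathcal{S}\rangle$ being abelian.
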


\begin{proof}
Applying the octahedron axiom to the composition $gf$, we obtain a triangle $\Sigma^{-2}C(g) \rightarrow \Sigma^{-1}C(f) \rightarrow \Sigma^{-1}C(gf) \rightarrow \Sigma^{-1}C(g)$, where $\Sigma^{-1}C(g)$, $\Sigma^{-1}C(gf) \in \langle \mathcal{S} \rangle$.  It follows that $\Sigma^{-1}C(f)$ has a nice, minimal filtration whose factors lie in $\mathcal{S} \cup \Sigma^{-1}\mathcal{S}$.  We have that $\Hom(\Sigma^{-1}N, \Sigma^{-1}\mathcal{S}) = 0 = \Hom(T_S, \Sigma^{-1}\mathcal{S})$, hence $\Hom(\Sigma^{-1}C(f), \Sigma^{-1}\mathcal{S}) = 0$.  It follows from Lemma \ref{filtration} that $\Sigma^{-1}C(f)$ can have no factors lying in $\Sigma^{-1}\mathcal{S}$.  Therefore $\Sigma^{-1}C(f) \in \langle \mathcal{S} \rangle$.

The proof of the second statement is dual.
\end{proof}





We are now ready to prove that $\mathcal{E}$ is closed under the action of $\Xi$.  The following result is based on \cite{chuang2017perverse}, Proposition 7.4.

\begin{theorem}
The action of $\Xi$ on $\mathcal{E}$ is well-defined.
\end{theorem}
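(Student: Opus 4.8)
The plan is to show that if $(X_i)_i \in \mathcal{E}$, then $S \cdot (X_i)_i \in \mathcal{E}$ and $S^{-1} \cdot (X_i)_i \in \mathcal{E}$ for every proper subset $S \subsetneq [n]$; since Proposition \ref{action justification} already shows the result lands in $\widehat{\mathcal{E}}$, the only thing left to verify is the generating condition, i.e. that $\mathcal{T} = \langle \{\Sigma^{-m}X_i' \mid 1 \le i \le n, 0 \le m < |w|\} \rangle$ where $(X_i')_i = S \cdot (X_i)_i$. By the symmetry between $S$ and $S^{-1}$ (which are mutually inverse by Proposition \ref{action justification}), it suffices to handle one of them, say the action of $S$. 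Write $\mathcal{S} = \langle X_s \mid s \in S \rangle$ and $\mathcal{F} = \{X_i\}$, so $\mathcal{F}$ is a $|w|$-basis by hypothesis; we want to conclude that $\mathcal{F}' = \{X_i'\}$ is a $|w|$-basis as well.

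First I would set $\mathcal{U} = \langle \{\Sigma^{-m}X_i' \mid i, 0 \le m < |w|\} \rangle$, the extension-closed subcategory generated by the shifts of the new tuple, and aim to show it contains every $\Sigma^{-m}X_i$ for $0 \le m < |w|$, from which $\mathcal{U} = \mathcal{T}$ follows since $\mathcal{F}$ generates. For $i \in S$ we have $X_i' = X_i$, so those generators are immediately in $\mathcal{U}$. For $j \notin S$, recall $X_j' = \Sigma^{-1}(X_j)_S$, and the defining triangle of the maximal extension reads $\Sigma^{-1}(X_j)_S \to (X_j)_S \xrightarrow{f_j} X_j \to (X_j)_S$, equivalently $\Sigma^{-1}C(f_j) = \Sigma^{-1}(X_j)_S = X_j'$ where $C(f_j) \in \mathcal{S}$, and $(X_j)_S \in {}^{\perp}\mathcal{S} \cap \mathcal{S}^{\perp}$ issues aside, the key structural fact is that $(X_j)_S$ sits in a triangle $X_j' \to (X_j)_S \to C(f_j) \to \Sigma X_j'$ with $C(f_j) \in \langle \mathcal{S} \rangle = \langle X_s : s \in S\rangle$. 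Shifting by $\Sigma^{-m}$ for $0 \le m < |w|$, this exhibits $\Sigma^{-m}(X_j)_S$ as an extension of $\Sigma^{-m}C(f_j)$ by $\Sigma^{-m}X_j'$. Since $\Sigma^{-m}C(f_j)$ lies in $\langle \{\Sigma^{-m}X_s : s \in S\}\rangle \subseteq \mathcal{U}$ and $\Sigma^{-m}X_j' \in \mathcal{U}$, we get $\Sigma^{-m}(X_j)_S \in \mathcal{U}$ for $0 \le m < |w|$. Now I would use the other triangle, $\Sigma^{-1}(X_j)_S \to (X_j)_S \xrightarrow{f_j} X_j \to C(f_j)$ — more precisely $(X_j)_S \xrightarrow{f_j} X_j \to C(f_j) \to \Sigma(X_j)_S$ — which exhibits $X_j$ as an extension of $C(f_j)$ by $(X_j)_S$. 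For the needed range $0 \le m < |w|$: $\Sigma^{-m}(X_j)_S \in \mathcal{U}$ as just shown, and $\Sigma^{-m}C(f_j)$ is in the subcategory generated by $\{\Sigma^{-m}X_s\}$, which is in $\mathcal{U}$ (again using $0 \le m < |w|$, so no out-of-range shift appears), hence $\Sigma^{-m}X_j \in \mathcal{U}$.

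The subtle point — and what I expect to be the main obstacle — is keeping all grading shifts inside the admissible window $0 \le m < |w|$. The triangle relating $X_j$, $(X_j)_S$, and $C(f_j)$ has a connecting map into $\Sigma(X_j)_S$, and $C(f_j) \in \mathcal{S} = \langle X_s : s \in S \rangle$ is only built from \emph{unshifted} $X_s$'s, so shifting the whole triangle by $\Sigma^{-m}$ with $0 \le m \le |w|-1$ keeps every term's "shift coordinate" in $\{-m, -m+1\}$... but $-m+1$ can be $1$, which is outside the window. So I would instead be careful to use the correct orientation: the relevant triangle is $\Sigma^{-1}C(f_j) \to (X_j)_S \to X_j \to C(f_j)$, i.e. $X_j' \to (X_j)_S \to X_j \to \Sigma X_j'$; rotating, $(X_j)_S \to X_j \to \Sigma X_j' \to \Sigma (X_j)_S$, which is not quite what I want either. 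The clean approach, following Chuang–Rouquier's Proposition 7.4, is: first establish $\langle \mathcal{S}, X_j' : j \notin S\rangle$ in the \emph{non-shifted} sense already recovers $X_j$ via the single triangle $X_j' \to (X_j)_S \to X_j$ with middle term filtered by $\mathcal{S}$ (using $C(f_j) \in \langle\mathcal{S}\rangle$ and that $(X_j)_S$ is built from $X_j'$ and $C(f_j)$), which only uses shifts $0$ and $-1$ of objects in $\mathcal{S}$ and shift $0$ of $X_j'$ — all within the window since $|w| \ge 2$ wait, $|w|$ could be $1$. If $|w|=1$ then $C(f_j) \in \mathcal{S}$ and $X_j'$ generate $(X_j)_S$ and hence $X_j$ with only shift-$0$ copies; so that base case is fine. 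For $|w| \ge 2$, shifting the triangle $X_j' \to (X_j)_S \to X_j \to \Sigma X_j'$ by $\Sigma^{-m}$, $1 \le m \le |w|-1$, the connecting term becomes $\Sigma^{-m+1}X_j'$ with $-m+1 \le 0$ and $\ge -(|w|-2) > -|w|$, so it too lies in $\mathcal{U}$ by induction on $m$ (or rather, we read off $\Sigma^{-m}X_j$ as the cone of $\Sigma^{-m}X_j' \to \Sigma^{-m}(X_j)_S$, and $\Sigma^{-m}(X_j)_S$ is filtered by $\Sigma^{-m}C(f_j) \in \langle\{\Sigma^{-m}X_s\}\rangle \subseteq \mathcal{U}$ and $\Sigma^{-m}X_j' \in \mathcal{U}$). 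So the induction is really over $m$, building up $\Sigma^{-m}(X_j)_S \in \mathcal{U}$ then $\Sigma^{-m}X_j \in \mathcal{U}$ for each $m$ in turn, and the window constraint is never violated because every shift that appears is of the form $\Sigma^{-m}$ applied to an object that is itself genuinely built (via $\langle - \rangle$) from the allowed generators $\{\Sigma^{-m}X_i'\}$. Once all $\Sigma^{-m}X_i \in \mathcal{U}$ for $0 \le m < |w|$, we conclude $\mathcal{U} \supseteq \langle \{\Sigma^{-m}X_i\} \rangle = \mathcal{T}$, so $\mathcal{F}'$ is a $|w|$-basis, and by the dual argument (or by invoking that $S$ and $S^{-1}$ are mutually inverse) the action of $S^{-1}$ also preserves $\mathcal{E}$. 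This completes the proof that $\Xi$ acts on $\mathcal{E}$.
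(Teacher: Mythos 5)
There is a genuine gap, and it sits exactly at the hard point of the theorem. First, a bookkeeping error that propagates: for the maximal extension $f_j\colon (X_j)_S \to X_j$, Definition \ref{maximal extension} puts $\Sigma^{-1}C(f_j)$ in $\mathcal{S}$, so the defining triangle is $\Sigma^{-1}C(f_j) \to (X_j)_S \xrightarrow{f_j} X_j \to C(f_j)$ with $C(f_j) \in \Sigma\mathcal{S}$. Your identification ``$\Sigma^{-1}C(f_j) = \Sigma^{-1}(X_j)_S = X_j'$'' conflates the cone of $f_j$ with its source, and the triangle $X_j' \to (X_j)_S \to C(f_j) \to \Sigma X_j'$ on which you then build the argument is not a distinguished triangle (indeed $(X_j)_S = \Sigma X_j'$ by definition, so there is no extension to perform). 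Working with the correct triangle, the only membership you can read off at shift zero is $X_j \in \langle \Sigma X_j', \Sigma\mathcal{S}\rangle$: both terms carry the positive shift $\Sigma^{+1}$ of an allowed generator, which lies outside the window $\{\Sigma^{-m} \mid 0 \le m < |w|\}$. Your own aside (``$-m+1$ can be $1$, which is outside the window'') flags this, but the proposed fix does not resolve it: the shifts $\Sigma^{-m}X_j$ with $1 \le m \le |w|-1$ are indeed easy (this is the one-line half of the paper's ``general case''), but the unshifted $X_j$ cannot be extracted from the single defining triangle, for $|w|=1$ or otherwise.

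This $m=0$ case is what the bulk of the paper's proof exists to handle, and the argument there is different in kind from triangle-shuffling. One first proves a ``special case'' by induction on the length of a minimal $\mathcal{F}$-filtration of $\Sigma M$ (Lemma \ref{filtration}), for objects $M$ such that no shift of an element of $\mathcal{S}$ lies in the head of $\Sigma M$; this uses uniqueness of factor multisets, rearrangement of nice filtrations, and Lemma \ref{medium extension} to split off a piece controlled by the maximal extension. One then reduces $T = X_j$ to this special case via Serre duality: $\Hom(\Sigma^i Y, T) = 0$ for $Y \in \mathcal{S}$ forces $\Hom(\Sigma T, \Sigma^{-|w|+1+i}Y) = 0$, so $\Sigma T$ has no minimal filtration ending in a shift of $\mathcal{S}$. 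The Calabi--Yau hypothesis is therefore used essentially in the generation step, not only in the orthogonality step of Proposition \ref{action justification}; your proposal never invokes it and never touches filtrations, which is why the window obstruction cannot be circumvented along the route you describe.
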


\begin{proof}
We must show that the action of $S\subsetneq [n]$ on an orthogonal tuple preserves the property of being a generating tuple.  Let $(X_i)_i \in \mathcal{E}$, let $\mathcal{S} = \{X_i \mid i \in S\}$, let $\mathcal{F} = \{X_i\}$, and let $\mathcal{F}' = \{X_i'\}$.  Let $\mathcal{G} = \bigcup_{i= 0}^{|w|-1}\Sigma^{-i}\mathcal{F}$ and $\mathcal{G}' = \bigcup_{i= 0}^{|w|-1}\Sigma^{-i}\mathcal{F}'$.  Then $\langle \mathcal{G} \rangle = \mathcal{T}$, and we must show that the same holds for $\langle \mathcal{G}' \rangle$.

Take a nonzero object $M \in \mathcal{T}$.  We first consider the special case where no $\Sigma^{-i} Y$ lies in the head of $\Sigma M$, for any $Y \in \mathcal{S}, 0 \le i < |w|$.  We claim that $M \in \langle \mathcal{G}' \rangle$; the proof will be by induction on the $\mathcal{F}$-length, $m$, of $\Sigma M$.  If $m = 1$, then $\Sigma M \cong \Sigma^{-i}T$ for some $T \in \mathcal{F} - \mathcal{S}$.  We have a triangle $\Sigma^{-i-1}T_S \rightarrow M \rightarrow \Sigma^{-i}Y \rightarrow \Sigma^{-i}T_S$ for some $Y \in \langle \mathcal{S} \rangle$.  For any $0 \le i < |w|$, $\Sigma^{-i-1}T_S, \Sigma^{-i}Y \in \mathcal{G}'$, hence $M \in \langle \mathcal{G}' \rangle$.

Now suppose $m >1 $ and the result holds for lower lengths.  By Lemma \ref{filtration}, $\Sigma M$ must have a nice, minimal descending $\mathcal{G}$-filtration ending in some $\Sigma^{-d_1}T_1$, where $T_1 \in \mathcal{F}- \mathcal{S}, 0 \le d_1 < |w|$.  There exists a maximal $0 \le k \le m$ such that there exists a minimal filtration of the form
\begin{equation*}
\Sigma M = [\Sigma^{-d_m}T_m, \cdots, \Sigma^{-d_{k+1}}T_{k+1}, \Sigma^{-d_k}S_k, \cdots \Sigma^{-d_2}S_2, \Sigma^{-d_1}T_1]
\end{equation*}
where each $S_j \in \mathcal{S}$, and $T_{k+1} \in \mathcal{F} - \mathcal{S}$.  (If $k = m$, the filtration starts with $\Sigma^{-d_m}S_m$.)  The octahedron axiom gives us a triangle $\Sigma M' \rightarrow \Sigma M \rightarrow \Sigma M'' \rightarrow \Sigma^2M'$, where $\Sigma M'$ and $\Sigma M''$ have (necessarily minimal) filtrations given by $\Sigma M' = [\Sigma^{-d_m}T_m, \cdots, \Sigma^{-d_{k+1}}T_{k+1}]$ and $\Sigma M'' = [\Sigma^{-d_k}S_k, \cdots \Sigma^{-d_2}S_2, \Sigma^{-d_1}T_1]$.

Note that there is no minimal filtration of $\Sigma M'$ whose last factor is of the form $\Sigma^{-d_{k+1}}S_{k+1}$, with $S_{k+1} \in \mathcal{S}, 0 \le d_{k+1} < |w|$.  If so, we could concatenate this filtration of $\Sigma M'$ with the given filtration $\Sigma M''$ to produce a new minimal filtration for $\Sigma M$ which would contradict the maximality of $k$.  Since the length of $\Sigma M''$ is at least one, $\Sigma M'$ has length strictly shorter than $\Sigma M$.  By the induction hypothesis, $M' \in \langle \mathcal{G}' \rangle$.

We now show that $M'' \in \langle \mathcal{G}' \rangle$.  By the proof of part 1) of Lemma \ref{filtration}, by rearranging the $S_i$ we may assume WLOG that the filtration for $\Sigma M''$ expressed above is nice.  Let $1 \le r \le k$ be maximal such that $d_r = d_1$.  We may express $\Sigma M''$ as the triangle $\Sigma N_1 \rightarrow \Sigma M'' \rightarrow \Sigma N_2 \rightarrow \Sigma^2 N_1$, where $\Sigma N_1 = [\Sigma^{-d_k}S_k, \cdots \Sigma^{-d_{r+1}}S_{r+1}]$ and $\Sigma N_2 = \Sigma^{-d_1}([S_r, \cdots S_2, T_1])$ are nice, minimal filtrations.  For all $r < i \le k$, we have that $d_i < d_1 \le |w| - 1$, thus $N_1 = [\Sigma^{-d_k -1}S_k, \cdots \Sigma^{-d_{r+1}-1}S_{r+1}] \in \langle \mathcal{G}' \rangle$.

Next, $\Hom((T_1)_S, \Sigma \mathcal{S}) = 0$, hence the minimal extension $(T_1)_S \rightarrow T_1$ factors through the natural map $[S_r, \cdots S_2, T_1] \rightarrow T_1$.  Note also that $\Hom([S_r, \cdots S_2, T_1], \mathcal{S}) = 0$; otherwise by part 3) of Lemma \ref{filtration}, there would be some member of $\Sigma^{-d_1}\mathcal{S}$ lying in the head of $\Sigma N_2$.  But this is impossible, since any factor in the head of $\Sigma N_2$ also lies in the head of $M'$ and $M$, and the head of $M$ contains no such factors by assumption.  The hypotheses of Lemma \ref{medium extension} are satisfied, and so we obtain a triangle $\Sigma^{-1}(T_1)_S \rightarrow \Sigma^{-1}[S_r, \cdots S_2, T_1] \rightarrow Y \rightarrow (T_1)_S$, with $Y \in \langle \mathcal{S} \rangle$.  Applying $\Sigma^{-d_1}$ to this triangle, we obtain $\Sigma^{-d_1 -1} (T_1)_S \rightarrow N_2 \rightarrow \Sigma^{-d_1}Y$.  Since $0 \le |d_1| < |w|$, both of the outside terms lie in $\langle \mathcal{G}' \rangle$, hence so does $N_2$.  It follows immediately that $M''$ and therefore $M$ lie in $\langle \mathcal{G}' \rangle$.  This concludes our proof of the special case.

We are now ready to prove the general case; it suffices to show that $\mathcal{G} \subset \langle \mathcal{G}' \rangle$.  By definition, $\bigcup_{i=0}^{|w|-1}\Sigma^{-i}\mathcal{S} \subset \mathcal{G}'$.  For $T \notin \mathcal{S}, 0 < i \le |w| -1$, the triangle $\Sigma^{-i-1}C(f) \rightarrow \Sigma^{-i}T_S \rightarrow \Sigma^{-i}T \rightarrow \Sigma^{-i}C(f)$ shows that $\Sigma^{-i}T \in \langle \mathcal{G}' \rangle$.  It remains to show that $T \in \langle \mathcal{G}' \rangle$; we shall reduce this problem to the special case shown above.

Note that $\Hom(\Sigma^i Y, T) = 0$ for all $Y \in \mathcal{S}, 0 \le i \le |w|-1$, hence $\Hom(\Sigma T, \Sigma^{-|w|+1 + i}Y) = 0$ by Serre duality.  In particular, by Lemma \ref{filtration}, part 4), $\Sigma T$ has no nice, minimal descending filtration ending in $\Sigma^{i}Y$, for any $Y \in \mathcal{S}$, $0 \le i \le |w|-1$.  By the special case, $T \in \langle \mathcal{G}' \rangle$, and we are done.
\end{proof}


\section{The Dg-stable Category}
\label{The Dg-stable Category}

Let $A$ be a finite-dimensional, self-injective graded $k$-algebra, with $A^{>0} = 0$.  We view $A$ as a dg-module with zero differential.  Let $D^b_{dg}(A)$ be the bounded derived category of finite-dimensional differential graded right $A$-modules, and let $D^{perf}_{dg}(A)$ be the thick subcategory generated by $A$.  We define the \textbf{differential graded stable category} of $A$ to be $A\dgstab:= D^b_{dg}(A)/D^{perf}_{dg}(A)$.  The basic properties of this triangulated category are discussed in \cite{brightbill2018differential}.  We state the main result, which links the dg-stable category to the orbit category $A\grstab/ \Omega(1)$.  (For more about orbit categories, see Keller \cite{Keller2005}.)

\begin{theorem}{(\cite{brightbill2018differential}, Theorem 3.10)}

There is a fully faithful functor $F_A:  A\grstab/\Omega(1) \rightarrow A\dgstab$, which is the identity on objects.  The image of $F_A$ generates $A\dgstab$ as a triangulated category.
\end{theorem}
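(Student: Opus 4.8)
The plan is to obtain $F_A$ by descending the tautological inclusion of graded modules into dg-modules through the relevant quotients, and then to treat generation and full faithfulness separately; full faithfulness is the crux and is where Keller's theory of triangulated orbit categories, together with the hypotheses $A^{>0}=0$ and self-injectivity, really enters. \emph{Constructing the functor.} Viewing a finite-dimensional graded $A$-module as a dg-module with zero differential gives a functor $\iota\colon A\grmod\to D^b_{dg}(A)$ which is exact---it carries short exact sequences to triangles---and which literally identifies the internal shift $(1)$ with the cohomological shift $[1]$. Composing with the localization $D^b_{dg}(A)\to A\dgstab$, and noting that graded-projective modules, being summands of finite sums of internal shifts of $A$, lie in $D^{perf}_{dg}(A)$, we see that the composite annihilates every morphism factoring through a graded projective; hence it descends to a triangulated functor $\bar F\colon A\grstab\to A\dgstab$. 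Applying $\bar F$ to the syzygy triangle $\Omega M\to P\to M\to\Omega^{-1}\Omega M$, in which $P\mapsto 0$, yields a natural isomorphism $\bar F(\Omega^{-1}M)\cong\bar F(M)[1]$, so that $\bar F$ intertwines the suspension $\Omega^{-1}$ of $A\grstab$ with $[1]$; since $\Omega$ and $(1)$ commute, this and the isomorphism $\bar F\circ(1)\cong[1]\circ\bar F$ combine into a canonical natural isomorphism $\bar F\circ\Omega(1)\xrightarrow{\sim}\bar F$. Feeding this datum into the universal property of the orbit category (Keller, \cite{Keller2005}) produces the desired $F_A\colon A\grstab/\Omega(1)\to A\dgstab$, which is the identity on objects by construction; checking the coherence condition required by the universal property is routine.

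\emph{Generation.} Being a Verdier quotient, $A\dgstab$ is generated as a triangulated category by the image of $D^b_{dg}(A)$, and $D^b_{dg}(A)$ is generated by its finite-dimensional dg-modules. Any finite-dimensional dg-module $M$ has bounded, finite-dimensional cohomology, and its tower of cohomological truncations expresses $M$ as a finite iterated extension of the shifted graded modules $H^i(M)[-i]$. Hence graded modules generate $D^b_{dg}(A)$, so their images generate $A\dgstab$; and every graded module is an object in the image of $F_A$.

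\emph{Full faithfulness}---the hard part. I would deduce it from Keller's theorem that, for a dg category $\mathcal{B}$ equipped with a dg-autoequivalence $\Phi$, the canonical functor from the orbit category $H^0(\mathcal{B})/\Phi$ into the triangulated hull (a suitable perfect subcategory of $\mathcal{D}(\mathcal{B}/\Phi)$) is fully faithful provided the category is locally $\Phi$-finite, i.e.\ $\bigoplus_{n\in\mathbb{Z}}\Hom_{H^0(\mathcal{B})}(X,\Phi^nY)$ is finite-dimensional for all $X,Y$. Concretely the steps would be: (i) fix a dg enhancement $\mathcal{B}$ of $A\grstab$ and a dg lift of $\Omega(1)$; (ii) identify $A\dgstab=D^b_{dg}(A)/D^{perf}_{dg}(A)$ with the triangulated hull of $\mathcal{B}/\Omega(1)$, so that $F_A$ becomes the canonical functor; and (iii) verify the local-finiteness hypothesis. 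Step (iii) is exactly where $A^{>0}=0$ and self-injectivity are used: the generators of a fixed $X$ occupy a bounded window of internal degrees, while the support of $\Omega^nY(n)$ is pushed to $-\infty$ as $n\to+\infty$ and to $+\infty$ as $n\to-\infty$, so $\underline{\Hom}_{A\grstab}(X,\Omega^nY(n))$ vanishes for all but finitely many $n$. The genuinely delicate point is (ii)---pinning down that the triangulated hull of $\mathcal{B}/\Omega(1)$ is precisely $D^b_{dg}(A)/D^{perf}_{dg}(A)$, neither larger nor smaller---and I expect it to absorb most of the effort. Alternatively one can bypass (ii) and argue by a direct, more laborious computation: express $\Hom_{D^b_{dg}(A)}(X,Y)$ as a sum of graded $\Ext$-groups, track the Verdier localization at $D^{perf}_{dg}(A)$, and match the result termwise against $\Hom_{A\grstab/\Omega(1)}(X,Y)=\bigoplus_{n\in\mathbb{Z}}\underline{\Hom}_{A\grstab}(X,\Omega^n(n)Y)$---which relies on the same degree bounds.
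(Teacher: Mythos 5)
Two preliminary remarks. First, this paper does not prove the statement at all: it is quoted from \cite{brightbill2018differential}, so the only comparison available is with that external proof, which establishes full faithfulness by directly computing morphism spaces in the Verdier quotient (essentially your ``alternative'' route), not by invoking the orbit-category machinery. Second, the parts of your proposal that are actually carried out are fine: viewing graded modules as dg-modules with zero differential literally identifies $(1)$ with $[1]$, graded projectives land in $D^{perf}_{dg}(A)$, the syzygy triangle gives $\bar F\circ\Omega(1)\cong\bar F$, and the universal property of the orbit category then yields $F_A$; likewise your generation argument is correct, since $A^{>0}=0$ makes the smart truncations $\tau_{\le n}M$ dg-submodules, so every object of $D^b_{dg}(A)$ is a finite iterated extension of graded modules placed in single degrees.

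The genuine gap is full faithfulness, which is the entire content of the theorem and which you do not prove by either route. Concretely, one must show that for graded modules $X,Y$ the natural map $\bigoplus_{n\in\mathbb{Z}}\Hom_{A\grstab}(X,\Omega^nY(n))\to\Hom_{A\dgstab}(X,Y)$ is an isomorphism. In your route (i)--(iii), the embedding of the orbit category into Keller's triangulated hull is essentially formal (it is the Yoneda embedding of the dg orbit category, and the local $\Phi$-finiteness you emphasize is not the operative hypothesis there); the substance is exactly your step (ii) --- identifying the hull with $D^b_{dg}(A)/D^{perf}_{dg}(A)$ compatibly with the identity-on-objects functor --- together with step (i), producing a dg enhancement of $A\grstab$ with a dg lift of $\Omega(1)$, and you explicitly leave both undone (``I expect it to absorb most of the effort''). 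Your fallback direct computation is likewise only named, not performed: one has to show that morphisms in the Verdier quotient are computed by a colimit over syzygy/cosyzygy replacements (this is where self-injectivity is used, to trade perfect cones for projective modules on either side of a roof), that this colimit is precisely the orbit sum, and that the degree estimates coming from $A^{>0}=0$ force $\Hom_{A\grstab}(X,\Omega^nY(n))=0$ for $|n|\gg0$ so the system stabilizes to a finite sum. Surjectivity and injectivity of the comparison map both require this analysis, and none of it appears in the proposal; as written, the decisive step is a plan rather than a proof.
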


Dg-stable categories provide many examples of negative Calabi-Yau categories.

\begin{proposition}
\label{Calabi-Yau}
Let $A$ be a finite-dimensional symmetric algebra with a non-positive grading.  Suppose the socle of $A$ is concentrated in degree $-d$ for some $d \ge 0$.  Then $A\dgstab$ is $-(d+1)$-Calabi-Yau.
\end{proposition}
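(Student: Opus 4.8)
The plan is to establish a Serre functor on $A\dgstab$ and identify it with $\Sigma^{-(d+1)}$. Since $A\dgstab$ arises as a quotient of $D^b_{dg}(A)$, the natural strategy is to work first in the graded stable category $A\grstab$, transfer the Serre-duality data through $F_A$, and then keep track of the grading shift. Recall that for a finite-dimensional self-injective graded algebra $A$, the graded stable category $A\grstab$ has Auslander–Reiten/Serre-type duality governed by the Nakayama functor. When $A$ is symmetric and ungraded, $A\stab$ is $(-1)$-Calabi-Yau: $\Hom_{A\stab}(X,Y) \cong \Hom_{A\stab}(Y,\Omega X)^*$. The graded refinement of this is $\Hom_{A\grstab}(X,Y) \cong \Hom_{A\grstab}(Y, \Omega X(-d))^*$ — the socle degree $-d$ of $A$ records exactly the internal grading shift by which the Nakayama (identity, since $A$ symmetric) functor twists the syzygy. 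So the first step is to pin down this graded Serre duality precisely, citing or re-deriving the fact that for symmetric graded $A$ with socle in degree $-d$, the functor $X \mapsto \Omega X(-d)$ is a Serre functor on $A\grstab$.

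Next I would pass to the orbit category $A\grstab/\Omega(1)$. In $D^b_{dg}(A)$ the grading shift $(1)$ and the syzygy $\Omega$ interact with the triangulated suspension: concretely, $\Sigma = \Omega^{-1}$ on the image of $A\grstab$, while the internal shift $(1)$ becomes identified (up to the quotient) with a power of $\Sigma$. The key bookkeeping identity is that in $A\dgstab$ one has $\Omega X(1) \cong X$ (this is precisely the orbit relation that $F_A$ imposes, per the cited Theorem). Hence $(1) \cong \Omega^{-1} = \Sigma$ on objects of the image, and since that image generates, on all of $A\dgstab$. Therefore the graded Serre functor $X \mapsto \Omega X(-d)$ descends to $A\dgstab$ as $\Omega \cdot (-d) \cong \Omega \cdot \Sigma^{-d} = \Sigma^{-1}\Sigma^{-d} = \Sigma^{-(d+1)}$. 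So the computation reduces to the two substitutions $(1) = \Sigma$ and $\Omega = \Sigma^{-1}$, applied to the graded duality formula.

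The remaining work is to check that the duality isomorphism genuinely survives the Verdier quotient $D^b_{dg}(A)/D^{perf}_{dg}(A)$ — i.e. that $\Hom$-spaces in $A\dgstab$ are computed by $F_A$ on a generating set and that the natural transformation witnessing Serre duality is still an isomorphism after localizing. Here I would use Hom-finiteness of $A\dgstab$ (from \cite{brightbill2018differential}) together with the fact that $\Sigma^{-(d+1)}$ is an autoequivalence, so it suffices to verify the natural isomorphism $\Hom(X,Y) \cong \Hom(Y, \Sigma^{-(d+1)}X)^*$ on objects $X, Y$ in the image of $F_A$ — which generate — and then extend by the standard dévissage along triangles (both sides are cohomological in each variable, and a natural transformation of such functors that is an isomorphism on a generating set is an isomorphism everywhere).

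The main obstacle I anticipate is the second step: carefully justifying that the internal grading shift $(1)$ is identified with the triangulated suspension $\Sigma$ in $A\dgstab$ (as opposed to merely in the orbit category before taking the triangulated hull), and controlling the precise power and sign. This is really a question about how $F_A$ and the triangulated hull construction interact with the two a priori different "shift" operations on dg-modules — the complex shift $[1]$ and the internal grading shift $(1)$ — and getting the exponent $-(d+1)$ rather than, say, $-(d-1)$ or $+(d+1)$ requires being scrupulous about conventions (in particular the convention, fixed in the excerpt, that $\Omega$ is the *des*uspension). Once that identification is nailed down, the rest is the formal transfer of Serre duality through a generating subcategory, which is routine.
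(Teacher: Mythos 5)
Your proposal follows essentially the same route as the paper's proof: identify the Serre functor on $A\grstab$ as $\Omega\circ\nu$ with $\nu\cong(-d)$ because $A$ is symmetric with socle in degree $-d$, use the orbit relation $\Omega\cong(-1)$ to rewrite it as $(-d-1)=\Sigma^{-(d+1)}$, and extend to all of $A\dgstab$ via the fact that the image of $A\grstab/\Omega(1)$ generates. Your added care about the d\'evissage step for extending the duality isomorphism from the generating subcategory is a reasonable elaboration of what the paper leaves implicit, but the argument is the same.
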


\begin{proof}
For any self-injective algebra $A$, $A\stab$ has Serre functor $\mathbb{S} = \Omega \circ \nu$, where $\nu := A^* \otimes_A -$ is the Nakayama functor.  (See for instance \cite{erdmann2006stable}.)  Though we are unable to find a reference, it is well-known that the same result holds for $A\grstab$ when $A$ is a graded self-injective algebra.  Since $A$ is symmetric with socle concentrated in degree $-d$, we have that $\nu \cong (-d)$.  It follows that $\mathbb{S}$ is a Serre functor on the orbit category $A\grmod/\Omega(1)$.  Furthermore, since $\Omega \cong (-1)$ in this orbit category, we have that $\mathbb{S} \cong (-d-1)$ in $A\grmod/\Omega(1)$.  Since $A\grstab/\Omega(1)$ generates $A\dgstab$ as a triangulated category,  it follows that $(-d-1)$ extends to a Serre functor on all of $A\dgstab$.
\end{proof}


\section{A Combinatorial Model for $A\dgstab$}
\label{Bead Model}

\subsection{The Category}
In this section, we shall study the action of perverse equivalences on a specific triangulated category, namely the dg-stable category of a Brauer tree algebra.  The structure of this category was studied in detail in \cite{brightbill2018differential}, Section 6; we shall work in the same setting and use the same notational conventions, which we summarize below.

Let $k$ be an algebraically closed field.  Let $n \ge 2, d \ge 0$ be integers.  Let $A$ be the graded Brauer tree algebra on the star with $n$ edges, with socle in degree $-d$; this determines $A$ up to graded Morita equivalence.  We let $S_1, \cdots S_n$ denote the $n$ simple $A$-modules graded in degree $0$, and we let $M^i_j$ denote the indecomposable module with head $S_i$ and socle a shift of $S_j$.  The grading is chosen so that $dim\Ext^1(S_i, S_{i+1}) = 1$ for $1 \le i < n$ and $dim\Ext^1(S_n, S_1(d)) = 1$.  For more information on Brauer tree algebras, we refer the reader to Schroll \cite{schroll2018brauer}.

$A$ is a self-injective Nakayama algebra, so by \cite{brightbill2018differential}, Lemma 4.6, we have that $A\dgstab \cong A\grstab/\Omega(1)$.  This means that all computations in $A\dgstab$ can be reduced to computations in $A\grstab$:  For each indecomposable object $X$ in $A\dgstab$, there is a unique power of $\Omega(1)$ such that $\Omega^nX(n) \cong X$ is concentrated in degrees $[-d, 0]$.  If $X$ and $Y$ both lie in this degree range, then $\Hom_{A\dgstab}(X,Y)$ and $\Hom_{A\grstab}(X,Y)$ coincide.  The projection functor $A\grstab \rightarrow A\dgstab$ is exact, and every distinguished triangle in $A\dgstab$ descends from one in $A\grstab$.  Since $\Omega^{-1} \cong (1)$, we can also view the grading shift functor as the suspension functor.

Every indecomposable object in $A\dgstab$ is isomorphic to a shift of $M^1_l$, for $1 \le l \le \lfloor \frac{n+1}{2} \rfloor$.  We say such an object has \textbf{length} $l$.  In $A\dgstab$, the grading shift functor is periodic:  $X(P) \cong X$ for $P = (n+1)(d+2) - 2$ and any $X \in A\dgstab$.  Additionally, if $X$ has length exactly $\frac{n+1}{2}$, then $X(\frac{P}{2}) \cong X$.

It is proven in \cite{brightbill2018differential} that $A\dgstab$ is Hom-finite and Krull-Schmidt.  By Proposition \ref{Calabi-Yau}, $A\dgstab$ is $-(d+1)$-Calabi-Yau.  As in Definition \ref{simple-minded tuple}, let $\widehat{\mathcal{E}}$ be the set of all orthogonal $n$-tuples of objects of $A\dgstab$ (up to isomorphism).  Let $\mathcal{E}$ be the subset of all generating tuples.

Our primary goal is the analysis of the action of $\Xi$ on $\mathcal{E}$.  We will show that $A\dgstab$ admits $(d+1)$-orthogonal maximal extensions in Theorem \ref{well-defined}.  We shall see in Corollary \ref{object transitivity} that $\mathcal{E} = \widehat{\mathcal{E}}$ and that the action of $\Xi$ is transitive.  Note that the simple modules $(S_1, \cdots, S_n)$ form a generating tuple, hence $\mathcal{E}$ is non-empty.


\subsection{Beads on a Wire}
We now develop a combinatorial model of $\mathcal{E}$.  We shall associate indecomposable objects of $A\dgstab$ to beads of varying lengths on a circular wire.

We consider the set $\Z/P\Z$, viewed as a collection of evenly-spaced points on a circular wire of length $P$.  For integers $i, j$, we shall denote by $[[i, j]]$ the image of the closed interval $[i, j]\cap \Z$ in $\Z / P \Z$.
\begin{definition}
Let $i, l$ be integers, with $1 \le l \le n$.  Define $B_l(i)$ to be the interval $[[i - l(d+2), i]]$.  We refer to $B_l(i)$ as a \textbf{bead of type $l$ in position $i$}.  We refer to the interval $[[i - l(d+2) + 1, i - 1]]$ as the \textbf{well} of $B_l(i)$.  The intervals $[[i - l(d+2), i- l(d+2) + 1]]$ and $[[i-1, i]]$ are the \textbf{ridges} of $B_l(i)$, and the points $i - l(d+2)$ and $i$ are the \textbf{endpoints} of $B_l(i)$.
\end{definition}

\begin{remark}
We shall often identify the integer $i$ in the above definition with its image in $\Z / P\Z$.  This shall cause no confusion, as the definition depends only on the image of $i$.  We shall also view $(j)$ as a shift operator on the set of beads, so that $B_l(i)(j) = B_l(i+j)$.
\end{remark}

The total length of a bead of type $l$ is $l(d+2)$.  Geometrically, we view the beads as possessing an interior well, a depression of length $l(d+2) -2$ into which other (smaller) beads may be placed.  This well is surrounded by two ridges of length one, over which other beads cannot be placed.  We give an illustration in Figure \ref{beads1}.  Since no beads can fit in the well of a bead of type $1$, we will depict these beads without ridges or a well; this is a purely aesthetic choice.

\begin{definition}
\label{overlap}
Let $1 \le r \le l \le n$, and let $i, j \in \Z$.  We say the beads $B_l(i)$ and $B_r(j)$ \textbf{do not overlap} if one of the following holds:\\
1)  $[[j - r(d+2), j]] \subset [[i - l(d+2) + 1, i - 1]]$; that is, $B_r(j)$ is contained within the well of $B_l(i)$.\\
2)  $[[j - r(d+2), j]] \subset [[i, i - l(d+2) + P]]$; that is, $B_r(j)$ lies outside of $B_l(i)$ (though the beads' endpoints may touch).
\end{definition}

\begin{remark}  Note that condition 2) is symmetric with respect to $B_l(i)$ and $B_r(j)$, and condition 1) can only occur if $r < l$.
\end{remark}

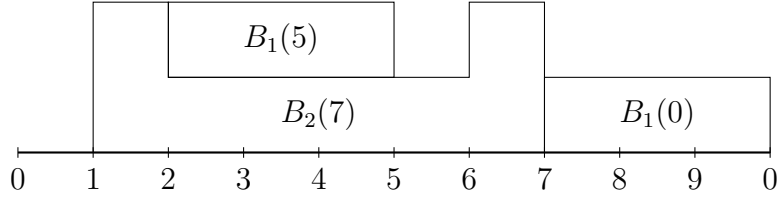
\begin{figure}
\centering
\begin{tikzpicture}
\draw[thick] (0,0) -- (10, 0) ;
\foreach \x in {0, 1, 2, 3, 4, 5, 6, 7, 8, 9}
	\draw (\x cm, 2pt) -- (\x cm, -2pt) node[anchor=north] {$\x$};
\draw (10 cm, 2pt) -- (10 cm, -2pt) node[anchor=north] {$0$};

\draw (1, 0) -- (7,0) -- (7, 2) -- (6,2) -- (6,1) -- (2,1) -- (2,2) --(1,2) -- cycle;
\draw (4, 0.15) node[anchor = south] {$B_2(7)$};

\draw (2, 1) rectangle (5, 2);
\draw (3.5, 1.15) node[anchor = south] {$B_1(5)$};

\draw (7, 0) rectangle (10, 1);
\draw (8.5, 0.15) node[anchor = south] {$B_1(0)$};
\end{tikzpicture}

\caption{Three non-overlapping beads; $n = 3$, $d=1$}
\label{beads1}
\end{figure}

In Figure \ref{beads1}, none of the beads overlap with one another.  Bead $B_1(5)$ lies inside the well of $B_2(7)$.  Bead $B_1(0)$ lies outside of $B_2(7)$.

Since the length of the wire is $P = (n+1)(d+2) -2$, there is not quite enough room on the wire for two beads of types $l$ and $n+1-l$.  However, two such beads can be placed on the wire in such a way that they intersect precisely along their ridges.  This motivates the following definition and proposition:

\begin{definition}
Let $B_l(i)$ be a bead.  Define the \textbf{partner} of $B_l(i)$ to be the bead $\widetilde{B_l(i)} := B_{n+1-l}(i-l(d+2) +  1)$.
\end{definition}

It is easy to verify that $B_l(i)$ and $\widetilde{B_l(i)}$ intersect precisely along the ridges of both beads, and that the function taking a bead to its partner is an involution.  See Figure \ref{beads2}.

If $B_l(i)$ and $B_r(j)$ are two beads, note that $B_r(j)$ lies in the well of $B_l(i)$ if and only if $B_r(j)$ lies outside $\widetilde{B_l(i)}$.  In this case, then $\widetilde{B_r(j)}$ and $B_l(i)$ will necessarily overlap, and $\widetilde{B_l(i)}$ will lie in the well of $\widetilde{B_r(j)}$.

We now relate beads to the indecomposable objects of $A\dgstab$.  

\begin{definition}
\label{bead object mapping}
Given a bead $B_l(i)$, define the \textbf{associated object of} $B_l(i)$ to be the object $M^1_l(i) \in A\dgstab$.  Let $\Phi$ be denote the function mapping a bead to (the isomorphism class of) its associated object.
\end{definition}

\begin{figure}
\centering
\begin{tikzpicture}
\draw[thick] (0,0) -- (10, 0) ;
\foreach \x in {0, 1, 2, 3, 4, 5, 6, 7, 8, 9}
	\draw (\x cm, 2pt) -- (\x cm, -2pt) node[anchor=north] {$\x$};
\draw (10 cm, 2pt) -- (10 cm, -2pt) node[anchor=north] {$0$};

\draw (1, 0) -- (7,0) -- (7, 2) -- (6,2) -- (6,1) -- (2,1) -- (2,2) --(1,2) -- cycle;
\draw (4, 0.15) node[anchor = south] {$B_2(7)$};
\draw[red, dashed, very thick] (0,0) -- (2, 0) -- (2, 2) -- (1, 2) -- (1, 1) -- (0, 1);
\draw[red, dashed, very thick] (10, 0) -- (6, 0) -- (6, 2) -- (7, 2) -- (7, 1) -- (10, 1);
\draw (9, 0.15) node[anchor = south] {$\widetilde{B_2(7)}$};
\end{tikzpicture}

\caption{A bead and its partner; $n = 3$, $d=1$}
\label{beads2}
\end{figure}
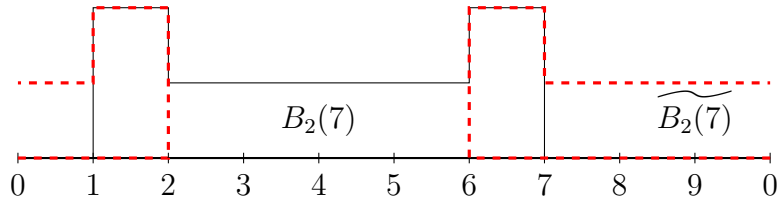

\begin{remark}
Note that $B_l(i+P) = B_l(i)$ for any $i$, so that $\Phi$ is well-defined when viewed as a function of $l$ and $i$.
\end{remark}

\begin{proposition}
\label{two-to-one}
$\Phi$ defines a two-to-one map from the set of beads onto the set of (isomorphism classes of) indecomposable objects of $A\dgstab$.  Each bead has the same image as its partner.
\end{proposition}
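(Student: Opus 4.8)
The plan is to combine the classification of the indecomposable objects of $A\dgstab$ recalled above with a counting argument, using the partner involution to control the fibres of $\Phi$. Surjectivity is immediate: every indecomposable object of $A\dgstab$ is isomorphic to $M^1_l(i)$ for some $1\le l\le\lfloor\frac{n+1}{2}\rfloor\le n$ and $i\in\Z$, and $M^1_l(i)=\Phi(B_l(i))$ by definition of $\Phi$.

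The heart of the argument is to show that a bead and its partner have the same image, i.e.\ that $M^1_l(i)\cong M^1_{n+1-l}(i-l(d+2)+1)$ in $A\dgstab$; applying the shift $(-i)$, this reduces to $M^1_l\cong M^1_{n+1-l}(1-l(d+2))$, i.e.\ to the assertion that in $A\dgstab$ the module $M^1_{n+1-l}$ is a grading-shift of $M^1_l$. I would prove this by an explicit syzygy computation in $A\grmod$: the syzygy $\Omega M^1_l$ is the uniserial submodule of the projective cover $P_1$ obtained by deleting the top $l$ composition factors, so it has length $n+1-l$ and top $S_{l+1}$; more generally each application of $\Omega$ sends a uniserial module of length $\lambda$ and top $S_i$ to one of length $n+1-\lambda$ and top $S_{i+\lambda}$, so $\Omega^{2(n-l)+1}(M^1_l)$ is uniserial of length $n+1-l$ with top $S_1$, hence equals $M^1_{n+1-l}(g)$ for a grading shift $g$ read off from the graded composition series of $P_1,\dots,P_n$ (whose socles sit in degree $-d$). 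Since $\Omega\cong(-1)$ in $A\dgstab\cong A\grstab/\Omega(1)$, applying the shift $(2(n-l)+1)$ gives $M^1_l\cong M^1_{n+1-l}(g+2(n-l)+1)$, and a bookkeeping computation with $P=(n+1)(d+2)-2$ should identify $g+2(n-l)+1$ with $1-l(d+2)$ modulo $P$; the case $l=n$ (where $\Omega M^1_n=\operatorname{soc}(P_1)$ lives in degree $-d$, forcing $M^1_n\cong M^1_1(d+1)$) and the consistency of the formula with the partner map being an involution serve as sanity checks. I expect this grading bookkeeping to be the main technical obstacle; alternatively the relation is implicit in the analysis of $A\dgstab$ in \cite{brightbill2018differential}, Section~6.

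It then remains to count. The beads are exactly the pairs $(l,i)$ with $1\le l\le n$ and $i\in\Z/P\Z$, so there are $nP$ of them. On the other side, the indecomposable objects of $A\dgstab$ are the shifts $M^1_l(i)$ with $1\le l\le\lfloor\frac{n+1}{2}\rfloor$, where — by the periodicity recalled above, there being no further coincidences by the structure theory of \cite{brightbill2018differential} — the shift $i$ ranges over $\Z/P\Z$ when $l<\frac{n+1}{2}$ and over $\Z/\tfrac{P}{2}\Z$ when $l=\frac{n+1}{2}$, the latter occurring only for $n$ odd, in which case $P$ is even; a short case distinction on the parity of $n$ then gives $\tfrac{nP}{2}$ indecomposables in total. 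Thus $\Phi$ is a surjection from a set of size $nP$ onto a set of size $\tfrac{nP}{2}$. The partner map $B\mapsto\widetilde{B}$ is a fixed-point-free involution on beads — it changes the type when $l\ne n+1-l$, and when $n$ is odd and $l=\frac{n+1}{2}$ it shifts the position by $\tfrac{P}{2}\not\equiv 0\pmod P$ — and by the preceding paragraph $\Phi(\widetilde{B})=\Phi(B)$, so every fibre of $\Phi$ contains at least two beads. Since $nP=2\cdot\tfrac{nP}{2}$, every fibre contains exactly two beads, namely a bead together with its partner, which is the claim.
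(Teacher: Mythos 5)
Your proposal is correct and follows essentially the same skeleton as the paper's proof: surjectivity from the classification of indecomposables, the partner identity $\Phi(\widetilde{B})\cong\Phi(B)$, and a count of $nP$ beads against $\tfrac{nP}{2}$ indecomposables followed by pigeonhole (your explicit check that the partner involution is fixed-point-free is a point the paper leaves implicit). The only real divergence is in how the partner identity is obtained: the paper cites Proposition 6.9, Equation (8) of \cite{brightbill2018differential} together with $\Omega(1)\cong\mathrm{id}$, whereas you propose a hands-on syzygy computation, which you leave hedged. For reassurance, your bookkeeping does close: across the $2(n-l)+1$ syzygies the total number of arrows traversed is $(n-l+1)l+(n-l)(n+1-l)=n(n+1-l)$, so the unique degree $-d$ arrow is crossed exactly $n+1-l$ times, giving $g=d(n+1-l)$ in your notation, and then
\begin{equation*}
d(n+1-l)+2(n-l)+1-\bigl(1-l(d+2)\bigr)=(n+1)d+2n=P,
\end{equation*}
so $M^1_l\cong M^1_{n+1-l}(1-l(d+2))$ in $A\dgstab$ as required (and your sanity check $M^1_n\cong M^1_1(d+1)$ is consistent with this). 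Likewise your derivation of the $\tfrac{nP}{2}$ count from the stated periodicity facts matches Corollary 6.15 of the reference, which is what the paper invokes directly.
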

\begin{proof}
By Proposition 6.10 of \cite{brightbill2018differential}, every object of $A\dgstab$ is isomorphic to $M^1_l(i) = \Phi(B_l(i))$ for some $1 \le l \le \lfloor \frac{n+1}{2} \rfloor$, $0 \le i < P$; thus $\Phi$ is surjective.  A straightforward counting argument shows that there are $nP$ beads, and Corollary 6.15 of \cite{brightbill2018differential} shows that $A\dgstab$ has $\frac{nP}{2}$ indecomposable objects, up to isomorphism.

A straightforward calculation using Proposition 6.9, Equation (8) of \cite{brightbill2018differential} shows that $\Phi(\widetilde{M^1_l(i)}) \cong \Omega \Phi(M^1_l(i))(1) \cong \Phi(M^1_l(i))$.  Since every indecomposable object has at least two preimages under $\Phi$, it follows by the pigeonhole principal that $\Phi$ is two-to-one.
\end{proof}

\begin{remark}
Note that if $l < \frac{n+1}{2}$, each $M^1_l(i)$ is the associated object of a unique bead of type $l$ and a unique bead of type $n+1 -l$.  When $l = \frac{n+1}{2}$ (note this requires $n$ to be odd), both preimages of $M^1_l(i)$ are beads of type $\frac{n+1}{2}$.  Taking the partner of a bead corresponds to applying $\Omega(1)$ to its associated object.
\end{remark}

\begin{proposition}
\label{bead-module compatibility}
Let $1 \le r \le l \le \lfloor \frac{n+1}{2} \rfloor$.  Then the beads $B_l(i)$ and $B_r(j)$ do not overlap if and only if $\Phi(B_l(i))$ and $\Phi(B_r(j))$ are distinct and independent.
\end{proposition}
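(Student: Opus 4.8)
The plan is to reduce the equivalence to a single morphism-space computation in $A\dgstab$, already available in \cite{brightbill2018differential}, and then match the two sides as arithmetic conditions on the residue $j-i \pmod P$. By Proposition \ref{Calabi-Yau}, $A\dgstab$ is $-(d+1)$-Calabi-Yau, and $\Sigma = (1)$, so (granting that every indecomposable $M^1_l(i)$ is elementary, see below) the objects $\Phi(B_l(i)) = M^1_l(i)$ and $\Phi(B_r(j)) = M^1_r(j)$ are distinct and independent precisely when $M^1_l(i) \not\cong M^1_r(j)$ and
\[
\Hom_{A\dgstab}(M^1_l(i), M^1_r(j-m)) = 0 = \Hom_{A\dgstab}(M^1_r(j), M^1_l(i-m)) \quad\text{for } 0 \le m \le d .
\]
The first step is therefore to invoke the computation of morphism spaces in $A\dgstab$ from \cite{brightbill2018differential}, Section 6 (Proposition 6.9): for indecomposables, $\dim\Hom_{A\dgstab}(M^1_a(x), M^1_b(y))$ depends only on $a$, $b$ and $y-x \pmod P$, is always $0$ or $1$, and equals $1$ exactly for $y-x$ in an explicit residue interval. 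Taking the diagonal instance $a=b$, $x=y$ yields elementariness of every $M^1_l(i)$ (so that ``independent'' is meaningful), and feeding in the shifts $m=0,\dots,d$ together with both orders of the arguments rewrites the displayed vanishing as: $j-i \pmod P$ avoids a union of two ``bands'' of residues, each of width $r(d+2)$.

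The second step is the parallel unwinding of Definition \ref{overlap}. For $l \ge r$, the beads $B_l(i)$ and $B_r(j)$ fail to overlap exactly when $j-i \pmod P$ lies in $[[r(d+2), P-l(d+2)]]$ (condition (2): $B_r(j)$ outside $B_l(i)$) or --- only possible when $r<l$ --- in $[[-(l-r)(d+2)+1, -1]]$ (condition (1): $B_r(j)$ inside the well of $B_l(i)$); this non-overlap set is empty exactly when $l+r=n+1$, which forces $n$ odd and $l=r=\tfrac{n+1}{2}$. Passing to the complement, $B_l(i)$ and $B_r(j)$ overlap exactly when $j-i \pmod P \in [[0, r(d+2)-1]] \cup [[P-l(d+2)+1, P-l(d+2)+r(d+2)]]$, again a union of two bands of width $r(d+2)$. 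It then remains to check, case by case, that these two bands coincide with the two forbidden bands produced in the first step; along the way one verifies that $M^1_l(i) \cong M^1_r(j)$ forces $l=r$ together with $j-i \equiv 0 \pmod P$ (or $\equiv 0 \pmod{P/2}$ when $l = \tfrac{n+1}{2}$), a residue lying inside both bands, so that ``distinct'' is automatic once the beads do not overlap and nothing extra is lost on the ``distinct'' side.

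The main obstacle is this final matching. One has to read off from \cite{brightbill2018differential}, Proposition 6.9 the precise residue interval on which each of the two $\Hom$-spaces is nonzero and confirm that widening it by the $d+1$ grading shifts $m=0,\dots,d$ produces exactly $[[0, r(d+2)-1]]$ in one direction and $[[P-l(d+2)+1, P-l(d+2)+r(d+2)]]$ in the other --- in effect, that the geometry ``a well of length $l(d+2)-2$ flanked by two ridges of length one'' is precisely the picture of the $(d+2)$-fold spacing of morphisms in $A\dgstab$ spread out over $\{0,\dots,d\}$. The two directions must be handled separately, since Serre duality does not interchange them (the Serre functor $(-d-1)=\Sigma^{-(d+1)}$ carries the index range $\{0,\dots,d\}$ to $\{1,\dots,d+1\}$), and it is exactly this two-sidedness that accounts for the two bands. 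Once the intervals are written out, what remains is a finite verification across the cases $r<l<\tfrac{n+1}{2}$, $r<l=\tfrac{n+1}{2}$, $r=l<\tfrac{n+1}{2}$ and $r=l=\tfrac{n+1}{2}$ (in the last of which both sides read ``always overlapping / never independent'').
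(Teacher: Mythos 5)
Your proposal is correct and follows essentially the same route as the paper: translate ``distinct and independent'' into vanishing of the two families of Hom-spaces over the shifts $0 \le m \le d$, convert these via the explicit computation of morphism spaces in $A\dgstab$ from \cite{brightbill2018differential} into conditions on the residue $j-i$ modulo $P$ (modulo $P/2$ when $r=l=\tfrac{n+1}{2}$), and match the result against the arithmetic reformulation of non-overlap, with the middle-length case treated separately; your band description of the overlap locus agrees with the paper's Lemma \ref{overlap reformulation}. Two points in your description of the ``final matching'' are off, though neither breaks the argument. First, the relevant computation is Theorem 6.12 of \cite{brightbill2018differential} (Proposition 6.9 is the shift identity for indecomposables), and for fixed $l, r$ the nonvanishing shifts form a \emph{discrete} set --- two arithmetic families of $r$ values spaced $d+2$ apart --- not a single residue interval; consequently each single direction of Homs, thickened by the shifts $m=0,\dots,d$, yields a comb of $2r$ intervals of length $d+1$ separated by unit gaps, not a solid band. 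Second, Serre duality \emph{does} interchange the two directions: $\Hom(M^1_r(j), M^1_l(i-m)) \cong \Hom(M^1_l(i-m), M^1_r(j-d-1))^*$, so the second family of vanishing conditions is the first family taken at shifts $m=1,\dots,d+1$; this is precisely how the paper merges everything into the single condition $\Hom(M^1_l, M^1_r(j-i-m))=0$ for all $0 \le m \le d+1$, which fills in the unit gaps. The two solid bands of width $r(d+2)$ therefore arise one from each of the two families in Theorem 6.12 (not one from each direction, as your commentary suggests). With these adjustments, your case-by-case matching, including the case $r=l=\tfrac{n+1}{2}$ where the interval exhausts $\mathbb{Z}/(P/2)\mathbb{Z}$, goes through exactly as in the paper.
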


To prove the above Proposition, it will be helpful to reformulate Definition \ref{overlap}.
\begin{lemma}
\label{overlap reformulation}
Let $1 \le r \le l \le n$.  Two beads $B_l(i)$ and $B_r(j)$ do not overlap if and only if $[[j - r(d+2) +1, j]] \cap \{i - l(d+2) + 1, i\} = \emptyset$ (as subsets of $\mathbb{Z}/P\mathbb{Z}$).
\end{lemma}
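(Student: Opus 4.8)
The plan is to reduce the statement to an elementary fact about arcs in the circle $\mathbb{Z}/P\mathbb{Z}$. Write $W = [[i - l(d+2)+1,\, i-1]]$ for the well of $B_l(i)$ and $O = [[i,\, i - l(d+2)+P]]$ for its exterior. First I would record that, since $1 \le l \le n$ forces $2 \le l(d+2) < P$ in the relevant range, the sets $W$ and $O$ are nonempty arcs that partition $\mathbb{Z}/P\mathbb{Z}$ (as point sets, $W = \{i-l(d+2)+1,\dots,i-1\}$ and $O = \{i,\dots,i-l(d+2)\}$), and that the two ``seams'' between them are precisely the adjacent pairs $\{i-1,\,i\}$ and $\{i - l(d+2),\, i - l(d+2)+1\}$. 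With this notation, Definition~\ref{overlap} says exactly that $B_l(i)$ and $B_r(j)$ do not overlap if and only if the arc $B_r(j) = [[j - r(d+2),\, j]]$ is contained in $W$ (condition 1) or contained in $O$ (condition 2).

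Second, I would prove the combinatorial fact that a proper arc $I$ of $\mathbb{Z}/P\mathbb{Z}$ lies inside $W$ or inside $O$ if and only if $I$ contains neither seam pair. The forward direction is immediate: if $I \subseteq W$, then $i \notin I$ and $i - l(d+2) \notin I$, so neither $\{i-1,i\}$ nor $\{i-l(d+2),i-l(d+2)+1\}$ is a subset of $I$; the case $I \subseteq O$ is symmetric. For the converse, if $I$ meets both $W$ and $O$, then since $I$ is a proper arc one can pass along it from a point of $W$ to a point of $O$ and find two consecutive points of $I$, one in $W$ and one in $O$; by the description of the seams this consecutive pair is one of $\{i-1,i\}$, $\{i-l(d+2),i-l(d+2)+1\}$.

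Third comes the translation to endpoints. The arc $B_r(j)$ has left endpoint $j - r(d+2)$, and $B_r(j)\setminus\{j-r(d+2)\} = [[j-r(d+2)+1,\,j]]$. The key observation is that for $x \in B_r(j)$ one has $x - 1 \in B_r(j)$ if and only if $x \neq j - r(d+2)$ (here one again uses that $B_r(j)$ does not wrap all the way around). Applying this with $x = i$ shows $B_r(j) \supseteq \{i-1,i\}$ iff $i \in [[j-r(d+2)+1,j]]$, and applying it with $x = i - l(d+2)+1$ shows $B_r(j) \supseteq \{i-l(d+2),\,i-l(d+2)+1\}$ iff $i - l(d+2)+1 \in [[j-r(d+2)+1,j]]$. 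Hence ``$B_r(j)$ contains neither seam pair'' is literally the condition $[[j - r(d+2)+1,\,j]] \cap \{i - l(d+2)+1,\, i\} = \emptyset$. Chaining the three equivalences gives the lemma.

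The main obstacle is the circular bookkeeping in the second step: making ``connectedness forces crossing a seam'' precise requires choosing a linear representative for $B_r(j)$ and verifying it does not cover all of $\mathbb{Z}/P\mathbb{Z}$, which is exactly where the bound $l \le n$ (hence $r(d+2) < P$) enters; the genuinely degenerate case (for $d = 0$, $l = n$) falls outside the range in which the lemma is later applied and can be dismissed directly. Everything else — the partition of the circle, the seam identification, and the predecessor argument — is routine.
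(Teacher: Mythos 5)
Your overall strategy is sound and, at bottom, the same elementary positional analysis as the paper's: the paper chooses integer representatives and checks which cyclic orderings of $j-r(d+2)+1$, $j$, $i-l(d+2)+1$, $i$ are consistent, while you repackage this as ``the circle is partitioned into the well $W$ and the exterior $O$, and an arc lies in one piece iff it contains neither of the two seam pairs.'' Steps 1 and 2 are fine as stated (in fact the partition $W \sqcup O = \mathbb{Z}/P\mathbb{Z}$ holds for every $1 \le l \le n$, including $d=0$, $l=n$ where $O=\{i\}$, and the converse in step 2 does not really need properness of $I$: if $I$ meets both $W$ and $O$ it contains a seam pair whether or not it is all of the circle).

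The gap is in step 3, and your guard against it is stated with the wrong bound. The predecessor observation ``$x-1 \in B_r(j)$ iff $x \neq j-r(d+2)$'' requires that $B_r(j)$, the image of $r(d+2)+1$ consecutive integers, not cover $\mathbb{Z}/P\mathbb{Z}$; this needs $r(d+2) \le P-2$, not merely $r(d+2) < P$. Since $r \le n$ gives $r(d+2) \le P-d$, the observation fails exactly when $r=l=n$ and $d \in \{0,1\}$: for $d=0$ one has $r(d+2)=P$, and for $d=1$ one has $r(d+2)=P-1$, so in both cases $B_r(j)$ is all of $\mathbb{Z}/P\mathbb{Z}$ and, e.g., $x = j-r(d+2)$ still has $x-1 \in B_r(j)$. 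You flag only the first of these, and your dismissal (``falls outside the range in which the lemma is later applied'') is not a proof of the lemma as stated for $r \le l \le n$. Fortunately both exceptional cases are checked directly: when $B_r(j)$ covers the circle the beads certainly overlap (neither containment in Definition \ref{overlap} can hold), and $[[j-r(d+2)+1, j]]$ omits at most one point of $\mathbb{Z}/P\mathbb{Z}$, so it must meet the two-element set $\{i-l(d+2)+1, i\}$; hence both sides of the equivalence are false there. With that two-line patch, and the bound corrected to $r(d+2) \le P-2$ in the generic case, your argument is complete.
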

\begin{proof}
Note that changing $i$ and $j$ by a multiple of $P$ does not affect the statement of the lemma.

Suppose $B_l(i)$ and $B_r(j)$ do not overlap.  Suppose condition 1) of Definition \ref{overlap} holds, that is, $B_r(j)$ lies in the well of $B_l(i)$.  Then $i$ and $j$ may be chosen so that
\begin{equation*}
i - l(d+2) + 1 <  j - r(d+2) +1 < j < i < i + l(d+2) + 1 + P
\end{equation*}
Thus neither $i$ nor $i - l(d+2)+1$ lies in $[[j - r(d+2) + 1, j]]$, hence the intersection is empty.

If condition 2) holds (i.e., $B_r(j)$ lies outside of $B_l(i)$), then $i$ and $j$ may be chosen so that
\begin{equation*}
i < j-r(d+2) +1 < j \le i-l(d+2) + P < i - l(d+2) + 1 + P < i +P
\end{equation*}
Once again, the intersection is empty.  This proves the forward direction.

For the reverse direction, suppose the intersection is empty.  Then, of the six potential cyclic orderings of $\{j-r(d+2)+1, j, i-l(d+2)+1, i\}$ inside $\Z/P\Z$, the only two consistent possibilities are:
\begin{equation*}
i-l(d+2)+1 < j-r(d+2) + 1 < j < i < i-l(d+2)+1+P
\end{equation*}
or
\begin{equation*}
j -r(d+2) +1 < j < i - l(d+2)+1 < i < j-r(d+2)+1+P
\end{equation*}
The first case implies that $B_r(j)$ lies in the well of $B_l(i)$.  The second implies that $B_r(j)$ lies outside of $B_l(i)$.  In both cases, $B_l(i)$ and $B_r(j)$ do not overlap.
\end{proof}

We now prove Proposition \ref{bead-module compatibility}.

\begin{proof}
$\Phi(B_l(i))$ and $\Phi(B_r(j))$ are distinct and independent in $A\dgstab$ if and only if, for all $0 \le m \le d$,
\begin{equation*}
\Hom(M^1_l(i), M^1_r(j-m)) = \Hom(M^1_r(j), M^1_l(i-m)) = 0
\end{equation*}

Since $A\dgstab$ is $-(d+1)$-Calabi-Yau, we can rewrite the above condition as
\begin{equation*}
\Hom(M^1_l, M^1_r(j-i-m)) = \Hom(M^1_l, M^1_r(j-i+m-d-1)) = 0
\end{equation*}
for all $0 \le m \le d$.  This can be further simplified to
\begin{equation*}
\Hom(M^1_l, M^1_r(j-i-m)) = 0
\end{equation*}
for all $0 \le m \le d+1$.

By Theorem 6.12 of \cite{brightbill2018differential} this holds if and only if, for all $0 \le m \le d+1$,
\begin{align*}
&j-i-m & \notin & \{(d+2)(r-k) \mid 1 \le k \le r\} \cup\\
& & &  \{(d+2)(n+1-k)-1 \mid 1+l-r \le k \le l\}\\
&& \Updownarrow&\\
&j-i & \notin & [[0, (d+2)(r-1) + d+1]] \cup\\
& & & [[(d+2)(n+1-l) -1, (d+2)(n+r-l) +d]]\\
& & \Updownarrow &\\
&j-i &\notin & [[0, (d+2)r -1]] \cup\\
& & & [[-(d+2)l +1, (d+2)(r-l)]]\\
& & \Updownarrow &\\
&j-i, &\notin & [[0, (d+2)r -1]]\\
&j-i+l(d+2)-1 &  &\\
& & \Updownarrow &\\
&i, i-l(d+2)+1 &\notin & [[j- r(d+2) +1, j]]
\end{align*}
where the above sets are viewed as subsets of $\Z/P\Z$ if $r < \frac{n+1}{2}$ and as subsets of $\Z/(\frac{P}{2}) \Z$ if $r = \frac{n+1}{2}$.

If $r < \frac{n+1}{2}$, the last condition is precisely that which appears in Lemma \ref{overlap reformulation}, and we are done.

If $r = \frac{n+1}{2}$, then necessarily $l = \frac{n+1}{2}$.  In this case, $B_r(j)$ and $B_l(i)$ always overlap, since the length of both the well and the outside of $B_l(i)$ is $\frac{n+1}{2}(d+2) - 2$, which less than the length of $B_r(j)$.  Thus it suffices to show that there are no pairs of distinct, independent objects of length $\frac{n+1}{2}$.  Since $-r(d+2) + 1 = -\frac{P}{2}$, the interval $[[j - r(d+2)+1, j]]$ is equal to $\Z/ (\frac{P}{2})\Z$, hence there can be no pairs of distinct, independent objects of length $\frac{n+1}{2}$.
\end{proof}

Proposition \ref{bead-module compatibility} can be partially extended to beads of unrestricted length.

\begin{proposition}
\label{bead-module compatibility 2}
Let $1 \le r, l \le n$.  If $B_l(i)$ and $B_l(j)$ do not overlap, then $\Phi(B_l(i))$ and $\Phi(B_r(j))$ are distinct and independent.
\end{proposition}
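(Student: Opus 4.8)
The plan is to reduce Proposition \ref{bead-module compatibility 2} to the already-established Proposition \ref{bead-module compatibility} by using the partner involution to replace beads of large type by beads of small type without changing the associated objects. The key observation is that if $l > \lfloor \frac{n+1}{2}\rfloor$, then the partner $\widetilde{B_l(i)} = B_{n+1-l}(i - l(d+2)+1)$ has type $n+1-l < \lfloor \frac{n+1}{2}\rfloor$, and by Proposition \ref{two-to-one} we have $\Phi(\widetilde{B_l(i)}) \cong \Phi(B_l(i))$. So distinctness and independence of $\Phi(B_l(i))$ and $\Phi(B_r(j))$ is unchanged when we pass to partners.

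First I would record the combinatorial fact underlying the reduction: if $B_l(i)$ and $B_r(j)$ do not overlap, then neither do $\widetilde{B_l(i)}$ and $B_r(j)$, nor $B_l(i)$ and $\widetilde{B_r(j)}$, nor the pair of partners. This is exactly the content of the remark preceding Definition \ref{bead object mapping} (the paragraph observing that $B_r(j)$ lies in the well of $B_l(i)$ iff it lies outside $\widetilde{B_l(i)}$, and so on), but for a clean proof I would verify it directly from Lemma \ref{overlap reformulation}: the non-overlap condition $[[j-r(d+2)+1,j]] \cap \{i - l(d+2)+1,\, i\} = \emptyset$ is visibly symmetric in a way that survives replacing $B_l(i)$ by $\widetilde{B_l(i)}$, since the endpoints of $\widetilde{B_l(i)}$ are $\{i - l(d+2)+1,\, i\}$ shifted appropriately — in fact the set $\{i-l(d+2)+1, i\}$ and the corresponding set for the partner, $\{(i-l(d+2)+1) - (n+1-l)(d+2)+1,\, i-l(d+2)+1\} = \{i - (n+1)(d+2)+2,\, i-l(d+2)+1\} = \{i - P,\, i-l(d+2)+1\} \equiv \{i,\, i-l(d+2)+1\}$, agree modulo $P$. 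So the non-overlap predicate depends only on the unordered pair of \emph{endpoints}, which is partner-invariant, and likewise for the second bead.

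Given this, the argument is short. Suppose $B_l(i)$ and $B_r(j)$ do not overlap, with $1 \le l, r \le n$ arbitrary. Replacing $B_l(i)$ by its partner if necessary, and $B_r(j)$ by its partner if necessary, we obtain beads $B_{l'}(i')$ and $B_{r'}(j')$ with $1 \le l', r' \le \lfloor \frac{n+1}{2}\rfloor$ which still do not overlap (by the previous paragraph) and which satisfy $\Phi(B_{l'}(i')) \cong \Phi(B_l(i))$ and $\Phi(B_{r'}(j')) \cong \Phi(B_r(j))$ (by Proposition \ref{two-to-one}). Now apply Proposition \ref{bead-module compatibility} — after possibly swapping the two beads so that the larger type comes first, which does not affect the hypothesis or conclusion — to conclude that $\Phi(B_{l'}(i'))$ and $\Phi(B_{r'}(j'))$ are distinct and independent, hence so are $\Phi(B_l(i))$ and $\Phi(B_r(j))$.

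The main obstacle is making the endpoint-invariance computation in the second paragraph fully rigorous, i.e. checking that the two endpoints of a bead and the two endpoints of its partner coincide as a set in $\mathbb{Z}/P\mathbb{Z}$; this is a one-line computation using $P = (n+1)(d+2)-2$, but it is the crux, since everything else is a formal consequence of Propositions \ref{two-to-one} and \ref{bead-module compatibility}. Note also that the converse of Proposition \ref{bead-module compatibility 2} genuinely fails for large types (overlapping beads can still have distinct independent associated objects once $l$ or $r$ exceeds $\lfloor\frac{n+1}{2}\rfloor$, since overlap is not partner-invariant while the associated object is), which is why only one direction is asserted.
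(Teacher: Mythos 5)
There is a genuine gap in the combinatorial step that does all the work. Your reduction rests on the claim that non-overlap is preserved when you replace \emph{either} bead (or both) by its partner, justified by saying the criterion of Lemma \ref{overlap reformulation} depends only on the unordered endpoint pairs, which are partner-invariant. That claim is false, and the paper's remark immediately following Proposition \ref{bead-module compatibility 2} says so explicitly: starting from a non-overlapping pair, of the four pairs obtained by replacing beads with partners, exactly one \emph{does} overlap. The error in the endpoint argument is a misreading of Lemma \ref{overlap reformulation}: its condition is $[[j-r(d+2)+1,j]] \cap \{i-l(d+2)+1, i\} = \emptyset$ with $r \le l$, so the smaller bead enters through the whole interval $[[j-r(d+2)+1,j]]$, not through its two endpoints alone. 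That interval is not partner-invariant --- the partner's interval has the same endpoints but is the complementary arc --- so replacing the \emph{smaller} bead by its partner genuinely changes the predicate. Concretely, if $B_r(j)$ sits in the well of $B_l(i)$ with $r<l$ (e.g.\ $B_1(5)$ inside $B_2(7)$ in Figure \ref{beads1}), then $\widetilde{B_r(j)}$ overlaps $B_l(i)$, contradicting your claimed invariance. Even for replacing the larger bead, your argument only goes through when $r \le n+1-l$, since otherwise the roles of ``smaller'' and ``larger'' in the lemma flip and the criterion is a different intersection condition.

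The paper's proof avoids this by never replacing the smaller bead of the current pair: assuming $r \le l$, it notes that $B_r(j)$ lies either in the well of $B_l(i)$ or outside it, and in either case $B_r(j)$ lies (respectively) outside or in the well of $\widetilde{B_l(i)}$, so replacing the \emph{larger} bead by its partner preserves non-overlap; if the other bead still has type exceeding $\lfloor \frac{n+1}{2}\rfloor$, it is now the larger bead of the new pair and the same argument is applied again. Your final configuration (replace exactly the beads of type $> \lfloor \frac{n+1}{2}\rfloor$) coincides with the paper's, and the appeal to Propositions \ref{two-to-one} and \ref{bead-module compatibility} at the end is fine, but the justification you give that this final pair is still non-overlapping does not hold; you need the well/outside case analysis (or some equivalent ordered, one-bead-at-a-time argument) rather than partner-invariance of the overlap relation.
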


\begin{proof}
Without loss of generality, we may assume $r \le l$.  By Proposition \ref{two-to-one}, a bead and its partner have the same associated object, so we can replace any bead with its partner without affecting the conclusion of the Proposition.  We shall reduce to the case where $r \le l \le \lfloor \frac{n+1}{2} \rfloor$ and apply Proposition \ref{bead-module compatibility}.

We may assume that $l >  \lfloor \frac{n+1}{2} \rfloor$.  Since $r \le l$, note that either $B_r(j)$ is contained in the well of $B_l(i)$ or lies outside.  In either case $\widetilde{B_l(i)}$ and $B_r(j)$ do not overlap, and $\widetilde{B_l(i)}$ has type $n+1 - l < \lfloor \frac{n+1}{2} \rfloor$.  Thus if $r \le \lfloor \frac{n+1}{2} \rfloor$, we have completed the reduction.  Otherwise, $r > \lfloor \frac{n+1}{2} \rfloor$, hence $B_r(j)$ is longer than $\widetilde{B_l(i)}$.  Repeating the same argument as above, $\widetilde{B_r(j)}$ and $\widetilde{B_l(i)}$ do not overlap, and both beads have type less than $\lfloor \frac{n+1}{2} \rfloor$, hence their associated objects are distinct and independent.  
\end{proof}

\begin{remark}
The converse to Proposition \ref{bead-module compatibility 2} is false.  Given a pair of non-overlapping beads, by replacing beads with their partners we can obtain four distinct pairs of beads with the same image under $\Phi$.  Of these four pairs, exactly one will overlap.
\end{remark}


\subsection{Bead Arrangements}

We now translate the notion of an orthogonal tuple into the language of beads.

\begin{definition}
A \textbf{colored bead arrangement} is an $n$-tuple whose entries are mutually non-overlapping beads.  An (uncolored) \textbf{bead arrangement} is a set of $n$ mutually non-overlapping beads.  A \textbf{free bead arrangement} is a bead arrangement, taken up to a rotation of the wire.  We let $CBA$ (resp. $BA$, $FBA$) denote the set of all colored bead arrangements (resp. bead arrangements, free bead arrangements).
\end{definition}

\begin{definition}
Let $B$ be a bead in a (colored, uncolored, or free) bead arrangment $A$.  Define the \textbf{height} $H(B)$ of $B$ in $A$ to be the number of beads $B'$ in $A$ such that $B \subset B'$.  If $B$ is in a colored bead arrangement, we define the \textbf{color} of $B$ to be the integer $i \in \{1, \cdots, n\}$ such that $B$ is the $i$th entry of the tuple.
\end{definition}

Figure \ref{beads1} shows an uncolored free bead arrangement.  The height of $B_1(5)$ is $2$, and the height of the other two beads is $1$.

The following statement is an immediate corollary of Proposition \ref{bead-module compatibility}.
\begin{proposition}
\label{arrangement-generator compatibility 1}
$\Phi$ induces surjections
\begin{eqnarray*}
CBA \twoheadrightarrow \widehat{\mathcal{E}}\\
FBA \twoheadrightarrow \widehat{\mathcal{E}}/\sim
\end{eqnarray*}
\end{proposition}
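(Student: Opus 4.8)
The plan is to deduce this Proposition directly from Proposition \ref{bead-module compatibility} together with Proposition \ref{two-to-one}, since the statement asserts essentially nothing beyond the bookkeeping that assembles the ``pairwise'' correspondence into a correspondence of tuples. I would first recall that, by definition, an element of $CBA$ is an $n$-tuple $(B^{(1)},\dots,B^{(n)})$ of mutually non-overlapping beads, and applying $\Phi$ entrywise gives an $n$-tuple $(\Phi(B^{(1)}),\dots,\Phi(B^{(n)}))$ of indecomposable objects of $A\dgstab$. To check this tuple lies in $\widehat{\mathcal{E}}$, I need the objects to be elementary, pairwise independent, and distinct; but any two distinct entries $B^{(a)},B^{(b)}$ are non-overlapping beads, and after replacing each with its partner if necessary (which by Proposition \ref{two-to-one} does not change the associated object) I may assume both have type at most $\lfloor\frac{n+1}{2}\rfloor$, so Proposition \ref{bead-module compatibility}—or more directly Proposition \ref{bead-module compatibility 2}, which already handles unrestricted lengths—gives that $\Phi(B^{(a)})$ and $\Phi(B^{(b)})$ are distinct and independent. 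Elementarity of each $\Phi(B^{(a)})$ is the degenerate case: a single bead does not overlap itself in the sense needed, or one simply notes that every indecomposable object of $A\dgstab$ is elementary by the length-classification (Theorem 6.12 of \cite{brightbill2018differential}), which is implicit in the $i=j$ case of the computations in the proof of Proposition \ref{bead-module compatibility}. Thus $\Phi$ carries $CBA$ into $\widehat{\mathcal{E}}$.

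Next I would prove surjectivity of $CBA\twoheadrightarrow\widehat{\mathcal{E}}$. Given an orthogonal tuple $(X_1,\dots,X_n)\in\widehat{\mathcal{E}}$, Proposition \ref{two-to-one} lets me choose, for each $i$, a bead $B^{(i)}$ with $\Phi(B^{(i)})\cong X_i$; the content is to show these can be chosen mutually non-overlapping. Here I use the converse direction of Proposition \ref{bead-module compatibility}: since $X_i$ and $X_j$ are distinct and independent, writing each as $M^1_{l}(\cdot)$ with $l\le\lfloor\frac{n+1}{2}\rfloor$ produces non-overlapping beads of type at most $\lfloor\frac{n+1}{2}\rfloor$. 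The one subtlety is the remark following Proposition \ref{bead-module compatibility 2}: among the four partner-variants of a non-overlapping pair, exactly one overlaps, so I cannot choose partners arbitrarily, but the canonical representative of length $\le\lfloor\frac{n+1}{2}\rfloor$ is precisely the one Proposition \ref{bead-module compatibility} certifies as non-overlapping, and a tuple of such beads is mutually non-overlapping because non-overlapping is checked pairwise. This gives a colored bead arrangement mapping to the given tuple, establishing the first surjection.

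Finally, the second surjection $FBA\twoheadrightarrow\widehat{\mathcal{E}}/\sim$ follows by passing to quotients: rotation of the wire by $j$ sends $B_l(i)$ to $B_l(i+j)$, hence sends $\Phi(B_l(i))=M^1_l(i)$ to $M^1_l(i+j)=\Sigma^{j}\Phi(B_l(i))$ (recalling the grading shift is the suspension in $A\dgstab$), so $\Phi$ intertwines the rotation action on $BA$ with the shift action generating $\sim$; moreover forgetting colors corresponds to forgetting the ordering of the tuple, i.e.\ quotienting by $\mathfrak{S}_n$, and $\sim$ is generated by both shifts and permutations. Therefore $\Phi$ descends to a well-defined map $FBA\to\widehat{\mathcal{E}}/\sim$, and it is surjective because $CBA\to\widehat{\mathcal{E}}$ is. The only genuine obstacle is the care needed in the surjectivity argument to select the correct partner-representatives so that non-overlap holds simultaneously for all pairs; everything else is a routine unwinding of definitions and an appeal to the already-established dictionary.
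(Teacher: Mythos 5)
Your proposal is correct and follows essentially the same route as the paper: use Proposition \ref{two-to-one} to pick bead representatives of type at most $\lfloor\frac{n+1}{2}\rfloor$, invoke Proposition \ref{bead-module compatibility} to translate pairwise distinctness and independence into pairwise non-overlap (and Proposition \ref{bead-module compatibility 2} for the easy well-definedness direction), and observe that rotation of the wire corresponds to suspension so the map descends to $FBA \rightarrow \widehat{\mathcal{E}}/\sim$. The only difference is that you spell out the well-definedness and elementarity bookkeeping that the paper dismisses as an immediate corollary, which is a harmless elaboration rather than a different argument.
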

\begin{proof}
Choose $(X_i)_i \in \widehat{\mathcal{E}}$.  By Proposition \ref{two-to-one}, for each $i$ there exists a bead $B^i$ of type $l$, $1 \le l \le \lfloor \frac{n+1}{2} \rfloor$, such that $\Phi(B^i) = X_i$.  By Proposition \ref{bead-module compatibility}, the $B^i$ are mutually non-overlapping, since the $X_i$ are mutually independent.  Thus $(B^i)_i$ is a colored bead arrangement and $\Phi(B^i)_i= (X_i)_i$.  If $C^i$ is a bead obtained from $B^i$ by a rotation of the wire, then $\Phi(C^i)$ is a shift of $\Phi(B^i)$.  Thus the second function is well-defined, and it is clear from the above argument that it is surjective.
\end{proof}

We would like to further restrict the class of bead arrangements so that the surjective maps defined above become bijections.  The above proof suggests that we restrict our attention to arrangements in which only beads of type $1 \le l \le \lfloor \frac{n+1}{2} \rfloor$ are permitted.  For $n$ even, this is the correct solution, as $\Phi$ induces a bijection between the beads of type $1 \le l < \frac{n+1}{2}$ and isomorphism classes of indecomposable objects of length $l$.  However, if $n$ is odd, and $l = \frac{n+1}{2}$, then $\Phi$ is two-to-one on the set of beads of type $l$.  To resolve this issue, we introduce a new object to our combinatorial model.

\begin{definition}
A \textbf{circlet} is a set $C(i) = \{B_{\frac{n+1}{2}}(i), \widetilde{B_{\frac{n+1}{2}}(i)}\}$ consisting of a bead of type $\frac{n+1}{2}$ and its partner.
\end{definition}

Geometrically, we interpret $C(i)$ as both beads, glued along their overlapping boundaries.  (See Figure \ref{beads3}.)  Thus, $C(i)$ divides the ring into two wells of length $\frac{n+1}{2}(d+2) -2$, separated by the two ridges $[[i-1, i]]$ and $[[i-\frac{n+1}{2}(d+2), i-\frac{n+1}{2}(d+2) + 1]]$.  Note that we can apply $\Phi$ to $C(i)$, since both elements of $C(i)$ have the same image under $\Phi$.  Furthermore, $\Phi$ establishes a bijection between the set of circlets and the set of isomorphism classes of indecomposable objects of length $\frac{n+1}{2}$.  

\begin{definition}
A bead $B_r(j)$ and a circlet $C(i)$ \textbf{do not overlap} if $B_r(j)$ does not overlap with either bead in $C(i)$.
\end{definition}

Note that if $r \ge \frac{n+1}{2}$, $B_r(j)$ will always overlap with at least one of the beads in any circlet $C(i)$, and if $r < \frac{n+1}{2}$, then if $B_r(j)$ does not overlap with one of the beads in $C(i)$, it will not overlap with either.

\begin{definition}
A (colored, uncolored, or free) bead arrangement is called \textbf{reduced} if all beads in the arrangement have type $1 \le l \le \lfloor \frac{n+1}{2} \rfloor$, and any bead $B_{\frac{n+1}{2}}(i)$ is replaced by the corresponding circlet $C(i)$.  We denote by $RCBA$ (resp. $RBA$, $RFBA$) the set of reduced colored bead arrangements (resp. reduced bead arrangements, reduced free bead arrangements).
\end{definition}

Note that since any two beads of type $\frac{n+1}{2}$ overlap, there can be at most one circlet in any type of reduced bead arrangement.  A reduced colored bead arrangement with a circlet is shown in Figure \ref{beads3}.

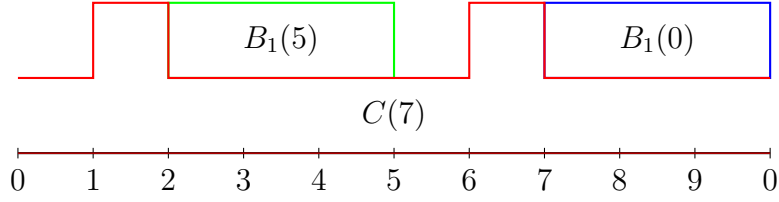
\begin{figure}
\centering
\captionsetup{justification = centering}
\begin{tikzpicture}
\draw[thick] (0,0) -- (10, 0) ;
\foreach \x in {0, 1, 2, 3, 4, 5, 6, 7, 8, 9}
	\draw (\x cm, 2pt) -- (\x cm, -2pt) node[anchor=north] {$\x$};
\draw (10 cm, 2pt) -- (10 cm, -2pt) node[anchor=north] {$0$};
\draw[green, thick] (2, 1) rectangle (5, 2);
\draw (3.5, 1.15) node[anchor = south] {$B_1(5)$};

\draw[blue, thick] (7, 1) rectangle (10, 2);
\draw (8.5, 1.15) node[anchor = south] {$B_1(0)$};

\draw[red, thick] (10, 1) -- (7,1) -- (7, 2) -- (6,2) -- (6,1) -- (2,1) -- (2,2) --(1,2) -- (1, 1) -- (0, 1);
\draw[red] (0, 0) -- (10, 0);
\draw (5, 0.15) node[anchor = south] {$C(7)$};
\end{tikzpicture}
\caption{The reduced colored bead arrangement $(C(7) , B_1(5), B_1(0))$; $n = 3$, $d=1$}
\label{beads3}
\end{figure}

\begin{proposition}
\label{arrangement-generator compatibility 2}
$\Phi$ induces bijections
\begin{eqnarray*}
RCBA \leftrightarrow \widehat{\mathcal{E}}\\
RFBA \leftrightarrow \widehat{\mathcal{E}}/\sim
\end{eqnarray*}
\end{proposition}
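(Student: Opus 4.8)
The plan is to assemble this from the two preceding propositions together with the bookkeeping role of circlets; little new input is needed. Recall from Proposition \ref{arrangement-generator compatibility 1} that $\Phi$ already induces a surjection $CBA \twoheadrightarrow \widehat{\mathcal{E}}$, and that $\Phi$ makes sense on circlets because a bead and its partner have the same image. First I would check that $\Phi$ is well-defined on $RCBA$: given a reduced colored bead arrangement $(B^i)_i$, each entry is either a bead of type $1 \le l < \frac{n+1}{2}$ or a circlet, and applying $\Phi$ entrywise yields an $n$-tuple of indecomposable objects. Mutual non-overlap of the $B^i$ — in the bead--bead case directly via Proposition \ref{bead-module compatibility}, and in the bead--circlet case using the remark that a bead of type $< \frac{n+1}{2}$ fails to overlap a circlet iff it fails to overlap either constituent bead — translates into the $\Phi(B^i)$ being pairwise distinct and independent. (Elementariness of each $\Phi(B^i)$ comes along for free: since $n \ge 2$, every entry is paired with at least one other, and independence of a pair already forces both members to be elementary.) Hence $(\Phi(B^i))_i \in \widehat{\mathcal{E}}$.

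Next I would prove the map $RCBA \to \widehat{\mathcal{E}}$ is a bijection. Surjectivity is the construction in the proof of Proposition \ref{arrangement-generator compatibility 1}, made canonical: given $(X_i)_i \in \widehat{\mathcal{E}}$, each $X_i$ has a well-defined length $\ell_i \le \lfloor \frac{n+1}{2}\rfloor$; if $\ell_i < \frac{n+1}{2}$ take $B^i$ to be the unique bead of type $\ell_i$ with $\Phi(B^i)=X_i$, and if $\ell_i = \frac{n+1}{2}$ take $B^i$ to be the unique circlet with $\Phi(B^i) = X_i$ (uniqueness by the remarks following Proposition \ref{two-to-one} and the fact that $\Phi$ is a bijection from circlets onto length-$\frac{n+1}{2}$ objects). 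By Proposition \ref{bead-module compatibility} the independence of the $X_i$ forces the $B^i$ to be pairwise non-overlapping, so $(B^i)_i \in RCBA$ maps to $(X_i)_i$. For injectivity the key observation is that $\Phi$ is injective on the disjoint union of $\{\text{beads of type } 1 \le l < \frac{n+1}{2}\}$ with $\{\text{circlets}\}$: on the former because distinct types give objects of distinct length and within a fixed such type $\Phi$ is a bijection onto length-$l$ objects; on the latter because $\Phi$ is a bijection onto length-$\frac{n+1}{2}$ objects; and these two image sets are disjoint as they consist of objects of different lengths. Therefore $\Phi(B^i)=\Phi(C^i)$ for all $i$ forces $B^i = C^i$ for all $i$.

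For the free version I would show the bijection $RCBA \leftrightarrow \widehat{\mathcal{E}}$ is equivariant for the relevant symmetries and pass to quotients. The group $\mathfrak{S}_n$ acts compatibly on both sides by permuting entries, and a rotation of the wire by $j$ sends $B_l(i) \mapsto B_l(i+j)$, hence sends associated objects to their shift by $(j) = \Sigma^j$; since $X(P) \cong X$ for all $X \in A\dgstab$, this descends to an action of $\mathbb{Z}/P\mathbb{Z}$ on $RCBA$ matching the $\Sigma^{\mathbb{Z}}$-action on $\widehat{\mathcal{E}}$. As $RBA = RCBA/\mathfrak{S}_n$, $RFBA = RBA/(\mathbb{Z}/P\mathbb{Z})$, and $\widehat{\mathcal{E}}/\sim$ is the quotient of $\widehat{\mathcal{E}}$ by the joint action of $\mathfrak{S}_n$ and $\Sigma^{\mathbb{Z}}$, an equivariant bijection on top descends to the bijection $RFBA \leftrightarrow \widehat{\mathcal{E}}/\sim$. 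I expect the only real friction to be the circlet bookkeeping in the injectivity step — confirming that replacing the two type-$\frac{n+1}{2}$ beads by a single circlet is exactly what repairs the two-to-one behavior of $\Phi$ from Proposition \ref{two-to-one} — together with the routine but slightly fussy verification that rotation of the wire modulo $P$ corresponds precisely to the $\Sigma^{\mathbb{Z}}$-symmetry defining $\sim$.
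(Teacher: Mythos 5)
Your proof is correct and follows essentially the same route as the paper: surjectivity comes from the construction in Proposition \ref{arrangement-generator compatibility 1}, and injectivity from the fact that $\Phi$ is bijective on beads of type $l < \frac{n+1}{2}$ and on circlets, with objects of distinct lengths having disjoint images. The extra detail you supply (equivariance under $\mathfrak{S}_n$ and rotation, then passing to quotients for the $RFBA$ statement) is a correct elaboration of what the paper leaves implicit.
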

\begin{proof}
Surjectivity of both maps follows immediately from the proof of Proposition \ref{arrangement-generator compatibility 1}.  Since $\Phi$ induces a bijection between beads of type $1 \le l < \frac{n+1}{2}$ and isomorphism classes of indecomposable objects of length $l$, as well as between circlets and isomorphism classes of indecomposable objects of length $\frac{n+1}{2}$, it follows that both maps are injective.
\end{proof}


\subsection{Counting $\widehat{\mathcal{E}}$}

In this section, we determine the cardinality of $\widehat{\mathcal{E}}$.  By Proposition \ref{arrangement-generator compatibility 2}, it suffices to count the number of reduced colored bead arrangements.  It is easy to reduce the problem to counting the reduced free bead arrangements.

\begin{proposition}
\label{colored to free reduction}
$|RCBA| = n!\cdot P \cdot |RFBA|$
\end{proposition}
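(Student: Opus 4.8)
The plan is to realize the forgetful surjection from reduced colored bead arrangements to reduced free bead arrangements as a composite of two orbit maps, $RCBA \longrightarrow RBA \longrightarrow RFBA$, each for a \emph{free} action of a finite group; the stated count then follows because a free action of a group $G$ has all orbits of size $|G|$, whence $|RCBA| = |\mathfrak{S}_n|\cdot|RBA| = |\mathfrak{S}_n|\cdot|\Z/P\Z|\cdot|RFBA|$.

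\textbf{Step 1 (recoloring): $|RCBA| = n!\cdot|RBA|$.} Let $\mathfrak{S}_n$ act on $RCBA$ by permuting the entries of the $n$-tuple. The conditions defining a reduced colored bead arrangement — that the entries be reduced beads or circlets, that at most one entry be a circlet, and that the entries be mutually non-overlapping — do not depend on the ordering, so the orbit set of this action is precisely $RBA$. The action is free because the $n$ entries of any reduced colored bead arrangement are pairwise distinct: a bead satisfies neither clause of Definition~\ref{overlap} when compared with itself, and two distinct circlets always overlap, so a repeated entry would violate mutual non-overlapping. Hence every $\mathfrak{S}_n$-orbit has size $n!$ and $|RCBA| = n!\cdot|RBA|$.

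\textbf{Step 2 (rotation): $|RBA| = P\cdot|RFBA|$.} The group $\Z/P\Z$ acts on $RBA$ by rotating the wire, $B_l(i)\mapsto B_l(i+t)$ and $C(i)\mapsto C(i+t)$, and $RFBA$ is by definition its orbit set; so the content of this step is that the action is \emph{free}, i.e. no reduced bead arrangement is fixed by a rotation $t$ with $0<t<P$. This is the crux of the proof. The useful first observations are that a single bead $B_l(i)$ is rotation-invariant only under multiples of $P$, whereas a circlet $C(i)=\{B_{(n+1)/2}(i),\,B_{(n+1)/2}(i-\tfrac{P}{2})\}$ is also invariant under $\tfrac{P}{2}$, and that a reduced bead arrangement contains at most one circlet. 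So if $\mathcal{A}\in RBA$ were fixed by some $0<t<P$: should $\mathcal{A}$ contain a circlet, that circlet — being the only one — must itself be fixed, forcing $t=\tfrac{P}{2}$; otherwise the $n$ beads of $\mathcal{A}$ split into $\langle t\rangle$-orbits of a common size $m:=P/\gcd(t,P)>1$, so $m\mid n$. In either case one must rule out the resulting periodicity, working from the nesting/height stratification of $\mathcal{A}$: rotation preserves containment, so each height-stratum is permuted freely into $\langle t\rangle$-orbits of size $m$, which forces very rigid placements of the beads in that stratum and, one argues, eventually contradicts the length and non-overlap constraints built into the word ``reduced''. Pinning down this incompatibility — especially in the near-symmetric situations, such as a circlet whose two wells carry mirror-image sub-arrangements, or a small bead recurring periodically around the wire — is the step I expect to demand the most work.

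\textbf{Conclusion.} Multiplying the two counts gives $|RCBA| = n!\cdot|RBA| = n!\cdot P\cdot|RFBA|$, as claimed.
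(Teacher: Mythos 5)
Your decomposition is exactly the paper's (its proof simply asserts that the two fibre counts are ``clearly'' $n!$ and $P$), and your Step 1 is fine. The genuine gap is precisely the one you flag in Step 2 --- and it cannot be closed, because the rotation action on $RBA$ is \emph{not} free. The arrangement underlying the paper's own Figure \ref{beads3} already witnesses this: for $n=3$, $d=1$ (so $P=10$), rotation by $P/2=5$ carries $C(7)$ to $C(2)=C(7)$ and swaps $B_1(5)$ and $B_1(0)$, so the set $\{C(7),B_1(5),B_1(0)\}$ is fixed and its rotation orbit has $5$ elements, not $P=10$. Such symmetric arrangements are plentiful: for every odd $n$ and every $d$ one can take a circlet together with $(n-1)/2$ type-$1$ beads in each of its two wells, placed mirror-symmetrically (rotation by $P/2$ fixes the set), and for $d=0$ the $n$ type-$1$ beads tiling the wire are fixed by rotation by $2$. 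The smallest case is checkable by hand: $n=2$, $d=0$ gives $P=4$, $RBA=\{\{B_1(0),B_1(2)\},\{B_1(1),B_1(3)\}\}$, each element fixed by rotation by $2$, so $|RCBA|=4$ and $|RFBA|=1$, whereas the claimed formula gives $2!\cdot 4\cdot 1=8$. Nor is this an artifact of the model: for $n=2$, $d=0$ one has $\Omega^2S_1\cong S_2$ in $A\stab$, so the orthogonal tuple $(S_1,S_2)$ is carried to a permutation of itself by a shift and its $\sim$-class has fewer than $n!\,P$ members.

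Consequently the honest output of your two steps is $|RCBA|=n!\,|RBA|=n!\sum_{\overline{A}\in RFBA}P/|\mathrm{Stab}(\overline{A})|$, which equals $n!\cdot P\cdot|RFBA|$ only when no reduced bead arrangement admits a nontrivial rotational symmetry; in general one needs an orbit--stabilizer (Burnside-type) correction or an explicit hypothesis excluding the symmetric configurations, rather than a completion of the ``rigid placement'' argument you sketch. The same issue propagates to Equation (\ref{gc2}) of Theorem \ref{generator counting}, and a close cousin affects Lemma \ref{single tree counting}: for $n=2$, $d=2$ the formula gives $N_T=3$, but the gap distributions $(2,0)$ and $(0,2)$ yield rotation-equivalent arrangements, so there are only $2$ free arrangements of that class. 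So your identification of the crux was correct; the step you could not supply is exactly the point the paper's one-line proof glosses over, and as stated it is false rather than merely hard.
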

\begin{proof}
The canonical map $RCBA \rightarrow RBA$ sending an $n$-tuple to a set is clearly surjective and $n!$-to-one.  The canonical map $RBA \rightarrow RFBA$ sending a bead arrangement to its equivalence class under rotation is clearly surjective and $P$-to-one.
\end{proof} 

Given a bead arrangement, one can draw a plane tree by associating a vertex to each bead and drawing an edge to each bead sitting directly on top of it.

\begin{definition}
\label{bead tree}
Given an uncolored bead arrangement $A$, define the rooted plane tree $(P(A), r_A)$ as follows:  The vertices of $P(A)$ are the beads of $A$, plus a new vertex, $r_A$, associated to the wire.  The vertex $r_A$ is defined to be the root of the tree.  Draw edges between $r_A$ and each bead of height $1$.  Draw an edge between $B_l(i)$ and $B_r(j)$ if and only if the difference in height between the two beads is $1$ and one bead contains the other.  Associating the vertex $B_l(i)$ with $i \in \mathbb{Z}/P\mathbb{Z}$, the natural cyclic ordering on $\mathbb{Z}/P\mathbb{Z}$ induces a cyclic ordering of all non-root vertices.  This induces a cyclic ordering of the edges around each vertex of height $\neq 1$.  For a vertex $B_l(i)$ of height one, the edge incident to $r_A$ is ordered as though it had value $i$.

We refer to $(P(A), r_A)$ as the \textbf{tree associated to} $A$.  The isomorphism class of $(P(A), r_A)$ (as a rooted plane tree) is called the \textbf{class} of $A$.
\end{definition}

We give two examples of bead arrangements and their associated trees in Figure \ref{beads4}.  The root of each tree is the bottom-most vertex.  Note that the two trees in Figure \ref{beads4} are isomorphic as trees, but not as plane trees, hence the two arrangements do not have the same class.  Intuitively, two bead arrangements will have the same class if and only if they differ by a rigid motion, where beads are allowed to move along, but not through, each other.  In particular, it is straightforward to check that $P(A)$ is invariant under rotation of the wire, hence the map $A \mapsto (P(A), r_A)$ is defined for free bead arrangements.

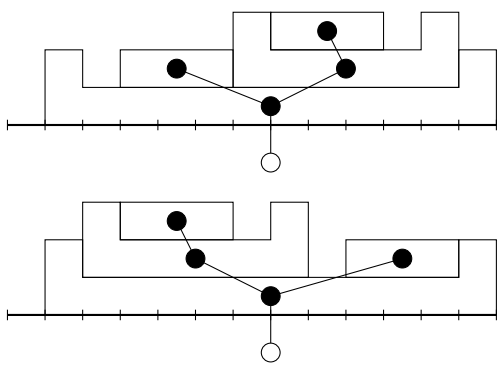
\begin{figure}[H]
\centering
\begin{tikzpicture}[x = 0.5 cm, y = 0.5 cm]
\draw[thick] (0,0) -- (13, 0) ;
\foreach \x in {0, 1, 2, 3, 4, 5, 6, 7, 8, 9, 10, 11, 12, 13}
	\draw (\x, 2pt) -- (\x, -2pt);

\draw (1, 0) -- (13, 0) -- (13, 2) -- (12, 2) -- (12,1) -- (2,1) -- (2,2) --(1,2) -- cycle;
\draw (3, 1) rectangle (6, 2);
\draw (6, 1) -- (12, 1) -- (12, 3) -- (11, 3) -- (11, 2) -- (7, 2) -- (7, 3) -- (6, 3) -- cycle;
\draw (7, 2) rectangle (10, 3);
\draw (7, -1) circle (0.25);
\filldraw[fill = black] (7, 0.5) circle (0.25);
\filldraw[fill = black] (4.5, 1.5) circle (0.25);
\filldraw[fill = black] (9, 1.5) circle (0.25);
\filldraw[fill = black] (8.5, 2.5) circle (0.25);
\draw (7, -0.75) -- (7, 0.5);
\draw (7, 0.5) -- (4.5, 1.5);
\draw (7, 0.5) -- (9, 1.5);
\draw (9, 1.5) -- (8.5, 2.5);
\end{tikzpicture}

\vspace{10 pt}

\begin{tikzpicture}[x = 0.5 cm, y = 0.5 cm]
\draw[thick] (0,0) -- (13, 0) ;
\foreach \x in {0, 1, 2, 3, 4, 5, 6, 7, 8, 9, 10, 11, 12, 13}
	\draw (\x, 2pt) -- (\x, -2pt);
\draw (1, 0) -- (13, 0) -- (13, 2) -- (12, 2) -- (12,1) -- (2,1) -- (2,2) --(1,2) -- cycle;
\draw (9, 1) rectangle (12, 2);
\draw (2, 1) -- (8, 1) -- (8, 3) -- (7, 3) -- (7, 2) -- (3, 2) -- (3, 3) -- (2, 3) -- cycle;
\draw (3, 2) rectangle (6, 3);
\draw (7, -1) circle (0.25);
\filldraw[fill = black] (7, 0.5) circle (0.25);
\filldraw[fill = black] (10.5, 1.5) circle (0.25);
\filldraw[fill = black] (5, 1.5) circle (0.25);
\filldraw[fill = black] (4.5, 2.5) circle (0.25);
\draw (7, -0.75) -- (7, 0.5);
\draw (7, 0.5) -- (10.5, 1.5);
\draw (7, 0.5) -- (5, 1.5);
\draw (5, 1.5) -- (4.5, 2.5);
\end{tikzpicture}

\caption{Two bead arrangements and their associated plane trees; $n = 4$, $d=1$}
\label{beads4}
\end{figure}

The following properties of the map $A \mapsto (P(A), r_A)$ are straightforward to verify.  We refer to Section \ref{Rooted Plane Trees} for terminology.

\begin{proposition}
\label{bead tree properties}
Let $A$ be a (free or uncolored) bead arrangement, and let $B_l(i)$ be a bead in $A$.  Then:\\
1)  The depth of the vertex $B_l(i)$ in $P(A)$ is equal to the height of $B_l(i)$ in $A$.\\
2)  The weight of $B_l(i)$ in $(P(A), r_A)$ is $l$.\\
3)  If $B_l(i)$ has height one, let $A'$ denote the bead arrangement obtained by replacing $B_l(i)$ with its partner.  Then there is a canonical isomorphism of plane trees $P(A) \xrightarrow{\sim} P(A')$ induced by identifying the trees' common non-root vertices.  Identifying the two trees via this isomorphism, $(P(A'), r_{A'})$ is a rebalancing of $(P(A), r_A)$ in the direction of $B_l(i)$.
\end{proposition}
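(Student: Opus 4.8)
The plan is to handle the three parts in order; the single elementary fact behind parts~1 and~2 is that the beads of $A$ containing a fixed bead $B$ are totally ordered by inclusion. Indeed, if $B$ is contained in two distinct non-overlapping beads $C_1,C_2$, they cannot lie outside one another, since by Definition~\ref{overlap} two beads lying outside each other meet in at most their two shared endpoints while $B$ spans at least $d+3\ge 3$ lattice points; hence one of $C_1,C_2$ lies in the well of the other. Writing the chain of beads containing $B$ as $B=B_0\subsetneq B_1\subsetneq\cdots\subsetneq B_{h-1}$ with $h=H(B)$, applying the same observation with $B_k$ in place of $B$ gives $H(B_k)=h-k$, so consecutive members of the chain are nested with heights differing by exactly one and are therefore joined by an edge of $P(A)$, while $B_{h-1}$ (of height one) is joined to $r_A$. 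This exhibits $P(A)$ as a genuine tree and the chain as the unique path from $B$ to $r_A$, so $d(B)=h=H(B)$, which is part~1.

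For part~2 I would argue by a packing count. Call a \emph{slot} either the whole wire or the well of one of the $n$ beads, so there are $n+1$ slots; the beads lying directly in a slot $\sigma$ are its children in $P(A)$, and since they are pairwise non-overlapping with none containing another they sit side by side inside $\sigma$, leaving total gap $g_\sigma\ge 0$. Writing $f(\sigma):=(\text{length of }\sigma)+2$ and $l_c$ for the type of a child $c$, one has $f(\mathrm{well}(c))=l_c(d+2)$ and hence $f(\sigma)=\sum_c f(\mathrm{well}(c))+g_\sigma+2$; telescoping this identity from the wire gives $\sum_\sigma(g_\sigma+2)=f(\mathrm{wire})=(n+1)(d+2)$, so $\sum_\sigma g_\sigma=(n+1)d$. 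On the other hand, if $\sigma$ is the well of a type-$l$ bead then $g_\sigma=\big(l-\sum_c l_c\big)(d+2)-2$, and $g_\sigma\ge 0$ forces $l-\sum_c l_c\ge 1$, so $g_\sigma\ge d$ (the wire is handled identically). As $n+1$ quantities each $\ge d$ sum to $(n+1)d$, every $g_\sigma$ equals $d$, i.e.\ $\sum_c l_c=l-1$ for each bead-well; in particular a leaf has type one, and inducting up the tree via $W(v)=1+\sum_{u\in c(v)}W(u)$ gives $W(B_l(i))=1+\sum_c l_c=l$.

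For part~3, first note $A'$ is again a bead arrangement: as $B_l(i)$ has height one, every other bead of $A$ lies in the well of $B_l(i)$ or outside $B_l(i)$, and by the partnering dictionary recorded in the discussion of Figure~\ref{beads2} both cases are exactly the condition of not overlapping $\widetilde{B_l(i)}$; that same dictionary shows $\widetilde{B_l(i)}$ has height one in $A'$ and that the common beads keep their nesting relations. The correspondence to use fixes every common bead and sends $r_A\mapsto\widetilde{B_l(i)}$, $B_l(i)\mapsto r_{A'}$ — \emph{not} $B_l(i)\mapsto\widetilde{B_l(i)}$ — because partnering interchanges ``lies in the well of $B_l(i)$'' with ``lies outside $\widetilde{B_l(i)}$''. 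Under this correspondence one checks: the beads directly inside $B_l(i)$ (children of $B_l(i)$ in $P(A)$) are precisely the height-one beads of $A'$ other than $\widetilde{B_l(i)}$ (children of $r_{A'}$); the height-one beads of $A$ other than $B_l(i)$ (children of $r_A$) are precisely the beads directly inside $\widetilde{B_l(i)}$ (children of $\widetilde{B_l(i)}$); all remaining nesting is literally unchanged; and the cyclic orders agree because $\widetilde{B_l(i)}$ occupies the arc complementary to the well of $B_l(i)$, leaving the circular order of the other beads untouched. This gives a plane-tree isomorphism $P(A)\xrightarrow{\sim}P(A')$, and since $r_{A'}$ corresponds to $B_l(i)$, a child of $r_A$, the rooted plane tree $(P(A'),r_{A'})$ is by definition the rebalancing of $(P(A),r_A)$ in the direction of $B_l(i)$. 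I expect the bookkeeping in part~3 — and in particular resisting the wrong guess $B_l(i)\mapsto\widetilde{B_l(i)}$ — to be the only real obstacle; parts~1 and~2 are mechanical once the totally-ordered-chain observation and the slot identity are in hand.
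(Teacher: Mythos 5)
Your proposal is correct, and parts 1 and 3 follow the paper's own route: part 1 is the observation that ancestry in $P(A)$ is containment of beads (your totally-ordered-chain argument just makes explicit what the paper states in one line), and in part 3 you land on exactly the identification the paper uses, $r_A \mapsto \widetilde{B_l(i)}$ and $B_l(i) \mapsto r_{A'}$ with all common beads fixed, from which the rebalancing claim is immediate. Part 2 is where you genuinely diverge in bookkeeping: the paper proves the inequality $W(B_l(i)) \le l$ by induction on $l$ (children in the well have types summing to at most $l-1$), notes the same bound at the root with $l = n+1$, and then propagates the forced equality $W(r_A) = n+1$ downward through the tree; you instead run a global count over the $n+1$ ``slots,'' showing the gaps sum to $(n+1)d$ while each gap is at least $d$, so every gap equals $d$ and every bead's direct children have types summing to exactly $l-1$, after which $W = l$ follows by the same bottom-up recursion. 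Both arguments rest on the identical packing inequality plus the fact that the total length of the wire leaves no slack; your version has the small advantage of isolating the exact local statement (each gap is exactly $d$) as an explicit intermediate fact rather than extracting it from equality propagation, at the cost of slightly more arithmetic. Your care in part 3 to reject the tempting assignment $B_l(i) \mapsto \widetilde{B_l(i)}$, and your check that the cyclic orders survive because the partner occupies the complementary arc, supply exactly the details the paper leaves to ``follows directly from definitions.''
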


\begin{proof}
Given two beads $B$ and $B'$ in $A$, $B$ is an ancestor of $B'$ in $(P(A), r_A)$ if and only if $B' \subsetneq B$.  The first statement follows.

For the second statement, we first prove that $W(B_l(i)) \le l$, by induction on $l$.  If $l = 1$, then the statement is immediate, since $B_1(i)$ contains no bead but itself and is therefore a leaf.  Suppose the result holds for beads of type $k < l$.  Suppose the children of $B_l(i)$ are $\{ B_{x_j}(y_j) \}_{j=1}^{s}$.  Since a bead of type $k$ has length $k(d+2)$, and since the beads $B_{x_j}(y_j)$ are mutually non-overlapping beads inside the well of $B_l(i)$, we have that $\sum_{j} x_j \le l -1$.  Thus,
\begin{align}
\label{btp1}
W(B_l(i)) = 1+ \sum_j W(B_{x_j}(y_j)) \le 1 + \sum_j x_j \le l
\end{align}
and the inductive step is complete.

Note that (\ref{btp1}) remains true if $B_l(i)$ is replaced by $r_A$, and $l$ by $n+1$, since the ring has the same length as the well of a (hypothetical) bead of type $n+1$.  Furthermore, if equality holds in (\ref{btp1}), then $W(B_{x_j}) = x_j$ for all $j$.  Thus, equality at a vertex $v$ implies equality at all descendants of $v$.  Equality holds at $r_A$ by construction, hence at all vertices.  This proves the second statement.

The isomorphism in the third statement identifies $r_A$ with $\widetilde{B_l(i)}$ and $B_l(i)$ with $r_{A'}$; the remaining vertices are shared by the two trees.  The rest of the statement follows directly from definitions.
\end{proof}

Motivated by the previous proposition, if $A$ is a (free or uncolored) bead arrangement, and $A'$ is a bead arrangement obtained from $A$ by replacing a height one bead $B_l(i)$ by its partner, we say that $A'$ is a \textbf{rebalancing of $A$ in the direction of $B_l(i)$}.  Thus, the third statement of the previous proposition can be restated as saying that the rebalancing operation commutes with taking the associated tree of a bead arrangement.  By repeatedly rebalancing a tree in the direction of vertices of weight greater than $\lfloor \frac{n+1}{2} \rfloor$, one eventually obtains a balanced tree.  Performing the corresponding operation on bead arrangements, we see that reduced bead arrangements are precisely the analogues of balanced trees.  More precisely:

\begin{corollary}
\label{bead arrangement rebalancing}



Let $A$ be a free bead arrangement.  Then $A$ defines a reduced free bead arrangement $\overline{A}$ if and only if $(P(A), r_A)$ is balanced.  In this case, $\overline{A}$ contains a circlet $C = \{B, \widetilde{B}\}$ if and only if $P(A)$ has two balancing roots.  In this case, let $A'$ be the other free bead arrangement defining $\overline{A}$, with $B \in A$ and $\widetilde{B} \in A'$.  After identifying $P(A)$ and $P(A')$ via the canonical isomorphism, $B$ and $\widetilde{B}$ are the two balancing roots of the tree.
\end{corollary}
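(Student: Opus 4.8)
The plan is to assemble the corollary from the three pieces already established in Proposition \ref{bead tree properties} and Proposition \ref{tree balancing}, viewing ``rebalance a bead arrangement'' and ``rebalance its tree'' as the same operation. First I would fix a free bead arrangement $A$ and observe that, by part 2) of Proposition \ref{bead tree properties}, the beads of type strictly greater than $\lfloor \frac{n+1}{2}\rfloor$ in $A$ correspond exactly to the non-root vertices $v$ of $(P(A), r_A)$ with $W(v) > \lfloor \frac{n+1}{2}\rfloor$. Hence $A$ \emph{is} reduced (no bead needs replacing, and a type-$\frac{n+1}{2}$ bead would be recorded as a circlet) precisely when $(P(A), r_A)$ is balanced. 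To prove the ``only if'' direction of the first equivalence — that if $A$ is not balanced then it does not define a reduced arrangement — I would note that a bead $B_l(i)$ of maximal height with $l > \lfloor\frac{n+1}{2}\rfloor$ must have height one (its weight/type bounds force any proper ancestor to have even larger type, impossible since the wire only accommodates type $\le n$), so it can be rebalanced by replacing it with its partner $\widetilde{B_l(i)}$ of type $n+1-l < \lfloor\frac{n+1}{2}\rfloor$; this is legitimate and strictly decreases the multiset of oversized types, so iterating terminates at the unique reduced arrangement $\overline{A}$, and by part 3) of Proposition \ref{bead tree properties} the corresponding tree operations are exactly rebalancings, terminating (by Proposition \ref{tree balancing}) at a balanced tree. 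The ``if'' direction is the observation that when $(P(A),r_A)$ is already balanced, no rebalancing steps are performed, so $\overline{A} = A$ already satisfies the type bound and we simply reinterpret any type-$\frac{n+1}{2}$ bead as a circlet.

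Next I would handle the circlet statement. The reduced arrangement $\overline{A}$ contains a circlet iff it contains a bead of type exactly $\frac{n+1}{2}$, which by part 2) of Proposition \ref{bead tree properties} happens iff $(P(\overline{A}), r_{\overline{A}})$ has a non-root vertex of weight $\frac{n+1}{2}$ — equivalently, a child of the root of weight $\frac{n+1}{2}$ (since $\overline{A}$ is reduced, all non-root vertices have weight $\le \lfloor\frac{n+1}{2}\rfloor$, and weight $\frac{n+1}{2}$ at a deeper vertex would force weight $> \frac{n+1}{2}$ at its parent). By the final statement of Proposition \ref{tree balancing}, a balanced tree has an edge of weight $\frac{n+1}{2}$ from the root to an adjacent vertex $r'$ iff it has two balancing roots $r$ and $r'$. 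This gives the equivalence ``$\overline{A}$ contains a circlet $\iff P(A)$ has two balancing roots.'' Finally, suppose this holds, let $C = \{B, \widetilde B\}$ be the circlet, write $A$ for the defining arrangement containing $B$ and $A'$ for the one containing $\widetilde B$, so $A'$ is the rebalancing of $A$ in the direction of $B$. By part 3) of Proposition \ref{bead tree properties}, under the canonical identification of $P(A)$ with $P(A')$, the tree $(P(A'), r_{A'})$ is the rebalancing of $(P(A), r_A)$ in the direction of the vertex $B$; in the language of Section \ref{Rooted Plane Trees} this means $r_{A'} = B$ while $r_A = \widetilde B$ (the old root becomes the partner vertex), and both are balancing roots since both $A$ and $A'$ are reduced. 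Hence $B$ and $\widetilde B$ are exactly the two balancing roots.

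The main obstacle I anticipate is purely bookkeeping rather than conceptual: making sure the termination argument for the rebalancing process is airtight, i.e. that a bead of type $> \lfloor\frac{n+1}{2}\rfloor$ always has height one so that its partner can legitimately be substituted, and that the partner has strictly smaller type so the process cannot cycle. This follows from the weight computation in part 2) of Proposition \ref{bead tree properties} — a vertex of weight $w$ cannot have a proper ancestor unless the wire (weight $n+1$) can contain the ancestor, forcing $w \le \lfloor\frac{n+1}{2}\rfloor$ for any non-maximal-height bead — but the argument must be phrased carefully to cover the edge case $n$ odd, $l = \frac{n+1}{2}$, where $B_l(i)$ and $\widetilde{B_l(i)}$ have the same type and the circlet genuinely records an unresolved two-root ambiguity. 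A secondary, minor technical point is checking that ``defines a reduced free bead arrangement $\overline A$'' is well-posed, i.e. that the terminal arrangement of the rebalancing process does not depend on the order of rebalancings; this is immediate once one knows it corresponds under $A \mapsto (P(A), r_A)$ to the balanced-tree output, whose root set is characterized intrinsically by Proposition \ref{tree balancing}. Everything else is a direct translation between the bead picture and the tree picture via Proposition \ref{bead tree properties}.
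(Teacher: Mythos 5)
Your proposal is correct and follows essentially the same route as the paper: weight equals type (part 2) of Proposition \ref{bead tree properties}) gives the first equivalence, the circlet criterion becomes the existence of a depth-one vertex of weight $\frac{n+1}{2}$, which Proposition \ref{tree balancing} converts into the two-balancing-roots condition, and part 3) identifies $B$ and $\widetilde{B}$ with the two balancing roots, exactly as in the paper. One caveat: your digression on iteratively rebalancing an unbalanced arrangement is not needed for the statement (``defines a reduced free bead arrangement'' simply means every bead has type at most $\lfloor \frac{n+1}{2} \rfloor$, so both directions of the first equivalence already follow from your opening sentences), and its claim that an oversized bead of \emph{maximal} height must have height one is false as phrased --- for $n=4$ a type-$3$ bead can sit in the well of a type-$4$ bead; the correct observation is that the \emph{outermost} oversized bead has height one, since any ancestor of an oversized bead is itself oversized. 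As that paragraph plays no role in proving the corollary, the slip does not affect your argument.
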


\begin{proof}
$A$ defines a reduced free bead arrangement if and only if every bead is of type at most $\frac{n+1}{2}$.  By Proposition \ref{bead tree properties}, this holds if and only if $P(A)$ is balanced.  If $A$ defines a reduced free bead arrangement, $\overline{A}$ contains a circlet if and only if $A$ contains a height one bead of type $\frac{n+1}{2}$, if and only if $P(A)$ contains a depth one vertex of weight $\frac{n+1}{2}$.  By Proposition \ref{tree balancing}, this happens if and only if $P(A)$ has two balancing roots.  In this case, $A'$ is a rebalancing of $A$ in the direction of $B$ and $A$ is a rebalancing of $A'$ in the direction of $\widetilde{B}$.  Identifying the two trees, $B$ and $B'$ are the vertices incident to the edge of weight $\frac{n+1}{2}$ and thus are the two balancing roots.
\end{proof}

By Corollary \ref{bead arrangement rebalancing}, the map $\overline{A} \mapsto P(\overline{A})$ is well-defined for reduced free bead arrangements, if we interpret $P(\overline{A})$ as an isomorphism class of plane trees.  We define $P(\overline{A})$ to be the \textbf{class} of $\overline{A}$, as for free bead arrangements.

We are now ready to count the reduced free bead arrangements.  The key result is the following lemma:

\begin{lemma}
\label{single tree counting}
Let $(T, r)$ be a rooted plane tree with $n$ edges.  Then the number of free bead arrangements $A$ of class $(T, r)$ is
\begin{align}
\label{stc1}
N_{T, r} = \binom{d + |c(r)| - 1}{d}\prod_{v \in V_T - \{r\}} \binom{d + |c(v)|}{d}
\end{align}
\end{lemma}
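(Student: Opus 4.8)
The plan is to count the free bead arrangements of a fixed class $(T,r)$ directly, by reconstructing such an arrangement vertex-by-vertex and tracking the positional freedom at each stage. Fix a rooted plane tree $(T,r)$ with $n$ edges. By Proposition \ref{bead tree properties}, any bead arrangement $A$ with $P(A)\cong (T,r)$ assigns to each non-root vertex $v$ a bead of type $W(v)$, and the containment relations and cyclic orderings are dictated by $(T,r)$; what remains free is, for each vertex $v$, where its children are placed within its well (for $v\neq r$) or on the wire (for $v=r$), subject to the children being mutually non-overlapping and arranged in the prescribed cyclic order. Since we are working with \emph{free} bead arrangements (up to rotation), the placement at the root is pinned down only up to rotation, which is what produces the asymmetry between the root factor and the non-root factors in (\ref{stc1}).

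The key local computation is the following: given a vertex $v$ of weight $w$ with children $v_1,\dots,v_s$ (in the prescribed cyclic order) of weights $w_1,\dots,w_s$, where $\sum_i w_i = w-1$, count the ways to place the corresponding beads inside the well of $B_w(\cdot)$. The well has length $w(d+2)-2$; a child bead of type $w_i$ occupies length $w_i(d+2)$; and the endpoints of child beads are forbidden from the two ridge-points by Lemma \ref{overlap reformulation}. So the total length consumed by child beads is $(w-1)(d+2)$, leaving $w(d+2)-2-(w-1)(d+2) = d$ units of "slack" to be distributed into the $s+1$ gaps (before the first child, between consecutive children, after the last child) — a stars-and-bars count giving $\binom{d+s}{d}$. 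For the root, the wire has length $P = (n+1)(d+2)-2$ and the $|c(r)|$ top-level beads consume $n(d+2)$, leaving again $d$ units of slack; but now there are only $|c(r)|$ gaps rather than $|c(r)|+1$, since placement is modulo rotation (equivalently, one can fix the position of the first top-level bead), giving $\binom{d+|c(r)|-1}{d}$. Multiplying these independent local counts over all vertices yields (\ref{stc1}).

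The main obstacle I anticipate is making the "independence" of these local choices fully rigorous — i.e., showing that an arbitrary assignment of gap-vectors at every vertex really does assemble into a valid bead arrangement of the correct class, and conversely that every such arrangement arises uniquely this way. One must check that the recursively-placed child beads genuinely lie in the wells they are supposed to (a consequence of the weight bookkeeping $W(v_i)=w_i$ from Proposition \ref{bead tree properties}), that no unintended overlaps occur between beads in different wells (automatic, since each well is disjoint from the others and from the ridges), that the induced cyclic ordering on vertices matches the plane-tree structure of $(T,r)$, and that for free arrangements the "mod rotation" identification is correctly accounted for exactly once, at the root. I would handle this by induction on the tree: treat each child subtree as determining, independently, a placement inside its parent's well via the induction hypothesis, and verify the base case (a leaf, contributing the empty product $\binom{d+0}{d}=1$) and the root case separately. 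A minor point to treat carefully is the degenerate case where $(T,r)$ is a single edge or where $d=0$; in the latter case every binomial is $1$ and the formula correctly predicts a unique arrangement of each class.
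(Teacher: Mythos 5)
Your proposal is correct and follows essentially the same route as the paper: a vertex-by-vertex choice procedure, with a stars-and-bars count of the $d$ units of slack ($|c(v)|+1$ gaps inside each well, $|c(r)|$ gaps on the wire after fixing the rotation at the root), multiplied over all vertices using the weight bookkeeping of Proposition \ref{bead tree properties}. The independence and reconstruction concerns you raise are handled in the paper exactly as you suggest, by noting the choices at each vertex are made independently and uniquely determine the arrangement.
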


\begin{proof}
We describe a choice procedure for constructing an arbitrary free bead arrangement of class $(T, r)$.  Starting with the root and working upwards, we associate beads to the children of each vertex of the tree.

First, we specify the placement of the height one beads.  Write $c(r) = \{v_1 < v_2 < \cdots < v_k < v_1\}$ as a cyclically ordered set.  (Note $k = |c(r)|$.)  We shall place beads of type $W(v_1), W(v_2), \cdots, W(v_k)$ sequentially on the wire, so that their right edges form an increasing sequence in $\mathbb{Z}/P\mathbb{Z}$.  Since a free bead arrangement is defined up to a rotation of the wire, we assume without loss of generality that the right edge of the bead corresponding to $v_1$ is at position $0$.  Since the $i$th bead has type $W(v_i)$, and $\sum^k_{i=1} W(v_i) = n$, the $k$ beads take up a total of $n(d+2)$ space on the wire, which has total length $n(d+2) + d$.  Thus, to uniquely specify the position of the height one beads (up to rotation of the wire), we need to distribute the $d$ empty spaces amongst the $k$ gaps between beads.  There are $\binom{d + k - 1}{d} = \binom{d + |c(r)| -1}{d}$ such choices.

Next, given any vertex $v$ corresponding to a bead $B$ of type $l$ already placed by our choice procedure, we must place the beads which lie in the well of $B$ and have height $H(B) + 1$.  It is clear that the number of such placements depends only on the type of $B$ and is independent of its horizontal placement.  Thus we may identify the well of $B$ with the interval $[0, l(d+2) -2 ]$.  Since $v \neq r$, $C(v) = \{w_1< \cdots < w_k \}$ is totally ordered (i.e., the parent of $v$ lies between $w_k$ and $w_1$ in the cyclic ordering).  As before, we place beads of type $W(w_1), \cdots, W(w_k)$ sequentially, from left to right.  Once again, there is a total of $d$ empty space in the well of $B$, and uniquely specifying the position of the beads in the well is equivalent to distributing $d$ empty spaces amongst the $k+1$ gaps found between the $k$ beads and the two walls of $B$.  There are $\binom{d+k}{d} = \binom{d + |c(v)|}{d}$ such choices.

It is clear that this choice procedure uniquely specifies all free bead arrangements of class $(T, r)$.  Since the choices made at each vertex are independent of previous choices, this establishes the formula.
\end{proof}

\begin{corollary}
\label{independence of root}
Let $(T,r)$ be a rooted plane tree.  The quantity $N_{T, r}$ is independent of $r$.
\end{corollary}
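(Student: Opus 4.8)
The plan is to rewrite the product formula for $N_{T,r}$ so that every factor depends only on the underlying tree $T$ and not on the choice of root. The key observation concerns the relationship between the number of children $|c(v)|$ of a vertex and its (undirected) degree $\deg_T(v)$ in $T$. Every edge incident to the root $r$ leads to a child, so $|c(r)| = \deg_T(r)$; every non-root vertex $v$ has exactly one incident edge leading to its parent and the remaining edges leading to children, so $|c(v)| = \deg_T(v) - 1$.

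Substituting these identities into
\begin{align*}
N_{T, r} = \binom{d + |c(r)| - 1}{d}\prod_{v \in V_T - \{r\}} \binom{d + |c(v)|}{d}
\end{align*}
turns the root factor $\binom{d + |c(r)| - 1}{d}$ into $\binom{d + \deg_T(r) - 1}{d}$ and each non-root factor $\binom{d + |c(v)|}{d}$ into $\binom{d + \deg_T(v) - 1}{d}$. Hence
\begin{align*}
N_{T, r} = \prod_{v \in V_T} \binom{d + \deg_T(v) - 1}{d},
\end{align*}
an expression in which $r$ no longer appears, which is exactly the claim.

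There is no real obstacle here: the statement is a direct bookkeeping consequence of Lemma \ref{single tree counting}, and the only thing to be careful about is the off-by-one in the binomial coefficient at the root versus at the other vertices, which is precisely accounted for by the parent edge. (One could alternatively deduce the corollary from the rebalancing machinery of Proposition \ref{tree balancing}, but the degree rewriting is shorter and makes the root-independence transparent.)
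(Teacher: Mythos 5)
Your proof is correct, but it is not the argument the paper gives. You work directly from the formula of Lemma \ref{single tree counting}: observing that $|c(r)| = \deg_T(r)$ while $|c(v)| = \deg_T(v)-1$ for $v \neq r$, you absorb the off-by-one at the root and obtain the manifestly root-free expression $N_{T,r} = \prod_{v \in V_T} \binom{d + \deg_T(v) - 1}{d}$. The paper instead stays inside the bead model: it reduces to adjacent roots $r, r'$ and constructs an explicit bijection between the free bead arrangements of class $(T,r)$ and those of class $(T,r')$, sending an arrangement $A$ to its rebalancing in the direction of the height-one bead corresponding to $r'$ (i.e.\ replacing that bead by its partner), with the inverse given by rebalancing back; this rests on Proposition \ref{bead tree properties}, not on Proposition \ref{tree balancing} as your aside suggests. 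The paper's own remark after the corollary acknowledges that a direct proof without bead arrangements is not difficult, and yours is exactly such a proof; it is shorter, and the closed formula in terms of vertex degrees is arguably a sharper statement. What the bijective route buys is that it exercises the rebalancing operation on arrangements, the same mechanism that underlies Corollary \ref{bead arrangement rebalancing} and the count of reduced free bead arrangements in Theorem \ref{generator counting}, so the root-independence is seen as a combinatorial symmetry of the model rather than an algebraic coincidence of the formula.
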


\begin{proof}
It suffices to show that $N_{T,r} = N_{T,r'}$ for adjacent vertices $r$ and $r'$.  Let $X_r$ denote the set of free bead arrangements of class $(T, r)$, and similarly for $r'$.  For each arrangement $A \in X_r$, there is a unique bead $B$ of height $1$ corresponding to the vertex $r'$; let $A'$ denote the rebalancing of $A$ in the direction of $B$.  By Proposition \ref{bead tree properties}, the map $A \mapsto A'$ defines a function $f:  X_r \rightarrow X_{r'}$.  By rebalancing $A'$ in the direction of $\widetilde{B}$, we recover $A$; thus $A' \mapsto A$ is a well-defined inverse of $f$.  Thus $N_{T,r} = |X_r| = |X_{r'}| = N_{T,r'}$.
\end{proof}

\begin{remark}
In view of Corollary \ref{independence of root}, we shall drop the $r$ from the subscript and simply refer to the quantity $N_T$.  It is not difficult to prove Corollary \ref{independence of root} directly, without reference to bead arrangements.
\end{remark}

We are now ready to state the main result of this section.

\begin{theorem}
\label{generator counting}
\begin{align}
\label{gc1}
|\widehat{\mathcal{E}}/\sim | = |RFBA| = \sum_{T \in \mathcal{PT}_n} N_T \\
\label{gc2}
|\widehat{\mathcal{E}}| = |RCBA| = (n!) \cdot P \cdot \sum_{T \in \mathcal{PT}_n} N_T
\end{align}
\end{theorem}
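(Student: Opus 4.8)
The strategy is to assemble the combinatorial dictionary built above and reduce to Lemma \ref{single tree counting}. First, Proposition \ref{arrangement-generator compatibility 2} supplies bijections $RCBA \leftrightarrow \widehat{\mathcal{E}}$ and $RFBA \leftrightarrow \widehat{\mathcal{E}}/\sim$, which give the leftmost equalities in (\ref{gc1}) and (\ref{gc2}); Proposition \ref{colored to free reduction} then reduces (\ref{gc2}) to (\ref{gc1}). So everything comes down to proving $|RFBA| = \sum_{T \in \mathcal{PT}_n} N_T$.

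I would partition $RFBA$ using the class map $\overline{A} \mapsto P(\overline{A}) \in \mathcal{PT}_n$, which is well-defined by Corollary \ref{bead arrangement rebalancing}, and show that for each fixed $T \in \mathcal{PT}_n$ the fibre has cardinality exactly $N_T$ (recall $N_T$ is root-independent by Corollary \ref{independence of root}). Fix $T$ and a balancing root $r$, which exists by Proposition \ref{tree balancing}. By Lemma \ref{single tree counting} there are exactly $N_{T,r} = N_T$ free bead arrangements of class $(T,r)$, and I would relate these to the reduced arrangements of class $T$ via the reduction map $A \mapsto \overline{A}$.

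If $T$ has a \emph{unique} balancing root $r$, then no free bead arrangement of class $(T,r)$ contains a bead of type $\frac{n+1}{2}$: such a bead corresponds to a vertex of weight $\frac{n+1}{2}$, which cannot have depth $\ge 2$ (its parent would then have weight $> \frac{n+1}{2}$, contradicting balancedness) and cannot have depth $1$ (Proposition \ref{tree balancing} would then produce a second balancing root). Hence every such arrangement is already reduced, $A \mapsto \overline{A}$ is a bijection onto the reduced free bead arrangements of class $T$, and the fibre has size $N_T$. If $T$ has \emph{two} balancing roots $r, r'$, then the free bead arrangements whose underlying plane tree is $T$ and whose associated rooted tree is balanced are precisely those of class $(T,r)$ together with those of class $(T,r')$, a total of $N_{T,r} + N_{T,r'} = 2N_T$. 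Each of these contains a height-one bead of type $\frac{n+1}{2}$ (the vertex given by the other balancing root, which has weight $\frac{n+1}{2}$ by part (2) of Proposition \ref{bead tree properties}), and by Corollary \ref{bead arrangement rebalancing} the class-$(T,r)$ arrangement containing a bead $B$ and the class-$(T,r')$ arrangement containing its partner $\widetilde{B}$ reduce to the same element of $RFBA$, whose unique circlet is $\{B, \widetilde{B}\}$. Thus $A \mapsto \overline{A}$ is two-to-one onto the reduced free bead arrangements of class $T$, and again the fibre has size $2N_T/2 = N_T$. Summing over $\mathcal{PT}_n$ yields $|RFBA| = \sum_{T} N_T$, completing the proof.

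The only place demanding real care is the two-balancing-root case: one must verify that $A \mapsto \overline{A}$ is \emph{exactly} two-to-one there, i.e. that a reduced arrangement with a circlet has precisely the two preimages described and no others. This is essentially immediate from Corollary \ref{bead arrangement rebalancing} and part (3) of Proposition \ref{bead tree properties}, since any unreduced balanced preimage is obtained by replacing the circlet by one of its two constituent beads; everything else is routine bookkeeping with the cited statements.
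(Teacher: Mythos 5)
Your proposal is correct and follows essentially the same route as the paper: the leftmost equalities via Proposition \ref{arrangement-generator compatibility 2}, the reduction of (\ref{gc2}) to (\ref{gc1}) via Proposition \ref{colored to free reduction}, and then a fibrewise count over $\mathcal{PT}_n$ using Lemma \ref{single tree counting}, Corollary \ref{bead arrangement rebalancing}, and the dichotomy of one versus two balancing roots. Your extra verification that the reduction map is exactly two-to-one in the two-root case (and that no circlet occurs in the unique-root case) is just a more explicit rendering of the bijections the paper invokes.
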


\begin{proof}
The left-hand equalities were proved in Proposition \ref{arrangement-generator compatibility 2}.  By Proposition \ref{colored to free reduction}, Equation (\ref{gc2}) follows immediately from Equation (\ref{gc1}).  Thus it suffices to prove that $|RFBA| = \sum_{T \in \mathcal{PT}_n} N_T$.

If $T$ has a unique balancing root $r$, then by Corollary \ref{bead arrangement rebalancing} the free bead arrangements of class $(T, r)$ are in bijection with the free reduced bead arrangements of class $T$, hence by Lemma \ref{single tree counting} there are $N_T$ reduced free bead arrangements of class $T$.

If $T$ has two balancing roots $r$ and $r'$, then by Corollary \ref{bead arrangement rebalancing} the free bead arrangements of class $(T, r)$ are in bijection with the free bead arrangements of class $(T, r')$ and also with the reduced free bead arrangements of class $T$.  Therefore there are again $N_T$ reduced free bead arrangements of class $T$.
\end{proof}


\section{The Action of $\Xi$}
\label{mutations}
\subsection{Bead Collisions and Mutations}

So far, our combinatorial model has allowed us to determine the size of $\widehat{\mathcal{E}}$.  Our next task is to describe the action of $\Xi$ (defined in Section \ref{Action}) on $\widehat{\mathcal{E}}$ (see Definition \ref{generator action}).  We have not yet shown $A\dgstab$ admits $(d+1)$-orthogonal maximal extensions; instead, we shall define an action on $CBA$ which descends to $\widehat{\mathcal{E}}$, and show that this is the desired action.

The intuition behind this action is as follows:  given a colored bead arrangement $A$, a set $S\subsetneq [n]$ acts on $A$ by sliding all beads of color $i \notin S$ one unit in the counterclockwise direction.  The resulting tuple of beads need not be a colored bead arrangement, as some of the beads may now overlap.  When this happens, we apply various "mutations" to the moved beads by extending or shrinking them depending on the nature of the collision.

\begin{definition}
\label{naive bead action}
Let $BT$ be the set of all $n$-tuples of beads (with overlaps allowed).  Define an action of $\Xi$ on $BT$ as follows.  Let $\sigma \in \mathfrak{S}_n$, $S \subsetneq [n]$, and $T = (B^1, \cdots, B^n) \in BT$.  Then:
\begin{align*}
\sigma \cdot T &:= (B^{\sigma(1)}, \cdots, B^{\sigma(n)})\\
S\cdot T &:= (B^1(- \delta_{1 \notin S}), \cdots, B^n(- \delta_{n \notin S}) )\\
S^{-1}\cdot T &:= (B^1(\delta_{1 \notin S}), \cdots, B^n(\delta_{n \notin S}) )
\end{align*}
We refer to this action as the \textbf{naive action} on bead tuples.
\end{definition}

Clearly, $CBA \subset BT$ is not stable under the naive action.  However, we are able to classify the ways in which collisions can occur between beads.

\begin{definition}
\label{bead collisions}
Let $B^1$ and $B^2$ be beads.

We say that $B^1$ \textbf{has a left collision of Type I} with $B^2$ if $B^1 \cap B^2$ is precisely the left ridge of $B^1$ and the right ridge of $B^2$.

We say that $B^1$ \textbf{has a left collision of Type II} with $B^2$ if the left ridge of $B^1$ coincides with the left ridge of $B^2$, and $B^1(1)$ lies in the well of $B^2$.

We say that $B^1$ \textbf{has a left collision of Type III} with $B^2$ if the right ridge of $B^1$ coincides with the right ridge of $B^2$, and $B^2$ lies in the well of $B^1(1)$.

We define the mirror notion of right collisions by reversing all instances of "left" and "right" in the above definitions, and replacing all positive shifts with negative shifts.  We shall work almost exclusively with left collisions throughout this paper.  When we do not specify left or right, we shall always mean a left collision.

We shall say $B^1$ is Type I, II, or III \textbf{left adjacent} to $B^2$ if $B(-1)$ has a Type I, II, or III left collision with $B_2$.  We define right adjacency analogously.
\end{definition}

\begin{remark}
Let $A = (B^1, \cdots B^n)$ be a colored bead arrangement and $S \subsetneq [n]$.  It is clear that for any $B^i$ and $B^j$ in $A$, $B^i(-1)$ and $B^j(-1)$ do not overlap; thus two beads in $S\cdot A$ can overlap only if one is moved by $S$ and the other remains stationary.  It is easy to verify that a moved bead $B^i(-1)$ and a stationary bead $B^j$ in $S \cdot A$ overlap if and only if $B^i(-1)$ has a left collision with $B^j$.  Similarly, a moved bead $B^i(1)$ and a stationary bead $B^j$ in $S^{-1} \cdot A$ overlap if and only if $B^i(1)$ has a right collision with $B^j$.
\end{remark}

For $\Xi$ to define an action on $CBA$, we must develop a means of correcting collisions.  For each type of collision, we introduce a corresponding mutation that resolves the collision.

\begin{definition}
\label{bead mutations}
Let $B^1 = B_{l_1}(i_1)$, $B^2 = B_{l_2}(i_2)$ be beads.

If $B^1$ has a Type I left collision with $B^2$, define the \textbf{Type I left mutation of} $B^1$ to be the bead $M_I(B^1) = B_{l_1 + l_2}(i_1)$.

If $B^1$ has a Type II left collision with $B^2$, define the \textbf{Type II left mutation of} $B^1$ to be the bead $M_{II}(B^1) = B_{l_2 - l_1}(i_2 -1)$.

If $B^1$ has a Type III left collision with $B^2$, define the \textbf{Type III left mutation of} $B^1$ to be the bead $M_{III}(B^1) = B_{l_1 - l_2}(i_1 - l_2(d+2))$.

Right mutations are defined analogously.  As with collisions, we shall simply write ``mutations'' when referring to left mutations.

If $A = (B^1, \cdots, B^n) \in BT$, and for some $i$ there is a unique $j$ such that $B^i$ has a Type $r$ collision with $B^j$, define $M_r^i(A)$ to be the tuple obtained from $A$ by replacing $B^i$ with $M_r(B^i)$.
\end{definition}

Each of the three types of mutation corresponds to an intuitive physical transformation of the bead.

In a mutation of type I, we extend the length of $B^1$, keeping the right endpoint fixed, until $B^2$ lies in its well.  $B^2$ will be Type II left adjacent to $M_I(B^1)$.  This process is illustrated in Figure \ref{beads5} below.

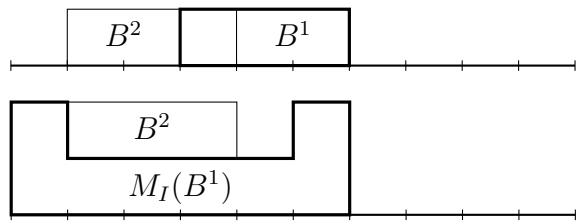
\begin{figure}[H]
\centering
\captionsetup{justification = centering}
\begin{tikzpicture}[x = 0.75 cm, y = 0.75 cm]
\draw[thick] (0,0) -- (10, 0) ;
\foreach \x in {0, 1, 2, 3, 4, 5, 6, 7, 8, 9, 10}
	\draw (\x, 2pt) -- (\x, -2pt);

\draw (1, 0) rectangle (4, 1);
\draw (2, 0.15 ) node[anchor = south] {$B^2$};
\draw[very thick] (3, 0) rectangle (6, 1);
\draw (5, 0.15 ) node[anchor = south] {$B^1$};
\end{tikzpicture}

\vspace{10 pt}

\begin{tikzpicture}[x = 0.75 cm, y = 0.75 cm]
\draw[thick] (0,0) -- (10, 0) ;
\foreach \x in {0, 1, 2, 3, 4, 5, 6, 7, 8, 9, 10}
	\draw (\x, 2pt) -- (\x, -2pt);

\draw (1, 1) rectangle (4, 2);
\draw (2.5, 1.15 ) node[anchor = south] {$B^2$};
\draw[very thick] (0, 0) --  (6, 0) -- (6, 2) -- (5, 2) -- (5, 1) -- (1, 1) -- (1, 2) -- (0, 2) -- cycle;
\draw (3, 0 ) node[anchor = south] {$M_I(B^1)$};
\end{tikzpicture}

\caption{Top:  A Type I left collision of $B^1$ with $B^2$. \newline
Bottom:  A Type I mutation of $B^1$.\newline
$n = 3, d = 1$}
\label{beads5}
\end{figure}

In a mutation of type II, we "reflect" $B^1$ inside the well of $B^2$.  $B^2$ will be Type III left adjacent to $M_{II}(B^1)$, and the left ridge of $M_{II}(B^1)$ will coincide with the right ridge of $B^1$.  $M_{II}(B^1)$ could be described as the ``partner of $B^1$, relative to $B^2$''.  This is illustrated in Figure \ref{beads6}.

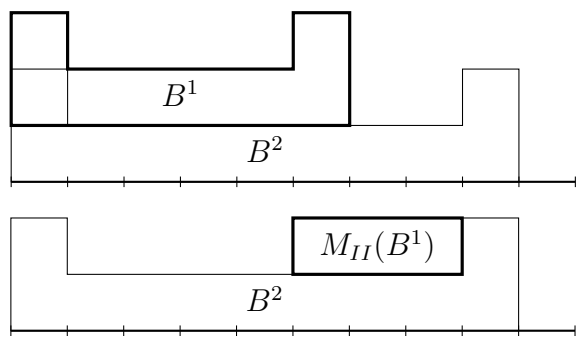
\begin{figure}[H]
\centering
\captionsetup{justification = centering}
\begin{tikzpicture}[x = 0.75 cm, y = 0.75 cm]
\draw[thick] (0,0) -- (10, 0) ;
\foreach \x in {0, 1, 2, 3, 4, 5, 6, 7, 8, 9, 10}
	\draw (\x, 2pt) -- (\x, -2pt);

\draw[very thick] (0, 1) -- (6, 1) -- (6, 3) -- (5, 3) -- (5, 2) -- (1, 2) -- (1, 3) -- (0, 3) -- cycle;
\draw (3, 1.15 ) node[anchor = south] {$B^1$};
\draw (0, 0) --  (9, 0) -- (9, 2) -- (8, 2) -- (8, 1) -- (1, 1) -- (1, 2) -- (0, 2) -- cycle;
\draw (4.5, 0.15 ) node[anchor = south] {$B^2$};
\end{tikzpicture}

\vspace{10 pt}

\begin{tikzpicture}[x = 0.75 cm, y = 0.75 cm]
\draw[thick] (0,0) -- (10, 0) ;
\foreach \x in {0, 1, 2, 3, 4, 5, 6, 7, 8, 9, 10}
	\draw (\x, 2pt) -- (\x, -2pt);

\draw[very thick] (5, 1) rectangle (8, 2);
\draw (6.5, 1 ) node[anchor = south] {$M_{II}(B^1)$};
\draw (0, 0) --  (9, 0) -- (9, 2) -- (8, 2) -- (8, 1) -- (1, 1) -- (1, 2) -- (0, 2) -- cycle;
\draw (4.5, 0.15 ) node[anchor = south] {$B^2$};
\end{tikzpicture}

\caption{Top:  A Type II left collision of $B^1$ with $B^2$. \newline
Bottom:  A Type II mutation of $B^1$.\newline
$n = 3, d = 1$}
\label{beads6}
\end{figure}

In a mutation of type III, we shorten $B^1$, keeping the left endpoint fixed, until $B^2$ no longer overlaps with it.  $B^2$ will be Type I left adjacent to $M_{III}(B^1)$.  This is illustrated in Figure \ref{beads7}.

\begin{figure}[H]
\centering
\captionsetup{justification = centering}
\begin{tikzpicture}[x = 0.75 cm, y = 0.75 cm]
\draw[thick] (0,0) -- (10, 0);
\foreach \x in {0, 1, 2, 3, 4, 5, 6, 7, 8, 9, 10}
	\draw (\x, 2pt) -- (\x, -2pt);

\draw (3, 1) -- (9, 1) -- (9, 3) -- (8, 3) -- (8, 2) -- (4, 2) -- (4, 3) -- (3, 3) -- cycle;
\draw (6, 1.15 ) node[anchor = south] {$B^2$};
\draw[very thick] (0, 0) --  (9, 0) -- (9, 2) -- (8, 2) -- (8, 1) -- (1, 1) -- (1, 2) -- (0, 2) -- cycle;
\draw (4.5, 0.15 ) node[anchor = south] {$B^1$};
\end{tikzpicture}

\vspace{10 pt}

\begin{tikzpicture}[x = 0.75 cm, y = 0.75 cm]
\draw[thick] (0,0) -- (10, 0) ;
\foreach \x in {0, 1, 2, 3, 4, 5, 6, 7, 8, 9, 10}
	\draw (\x, 2pt) -- (\x, -2pt);

\draw (3, 0) -- (9, 0) -- (9, 2) -- (8, 2) -- (8, 1) -- (4, 1) -- (4, 2) -- (3, 2) -- cycle;
\draw (6, 0.15 ) node[anchor = south] {$B^2$};
\draw[very thick] (0,0) rectangle (3, 1);
\draw (1.5, 0 ) node[anchor = south] {$M_{III}(B^1)$};
\end{tikzpicture}

\caption{Top:  A Type III left collision of $B^1$ with $B^2$. \newline
Bottom:  A Type III mutation of $B^1$.\newline
$n = 3, d = 1$}
\label{beads7}
\end{figure}
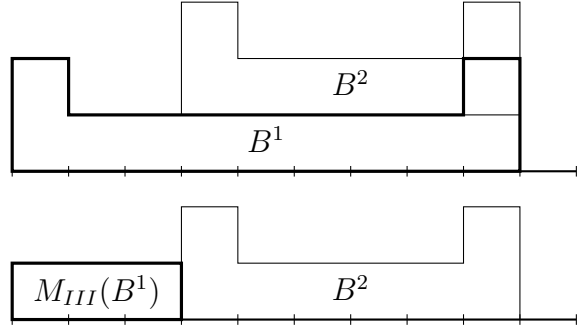

We are now ready to define an action of $\Xi$ on $CBA$.

\begin{definition}
\label{bead action}
Let $\sigma \in \mathfrak{S}_n$, $S \subsetneq \{1, \cdots n\}$, and $A = (B^1, \cdots, B^n) \in CBA$.  Define an action of $\Xi$ on $CBA$ according to the following procedure.  We denote this action by $\circ$ to distinguish it from the naive action $\cdot$ defined in \ref{naive bead action}.\\
1)  Let $S \circ A := S \cdot A$ if $S\cdot A \in CBA$.\\
2)  If $S \cdot A \notin CBA$, apply Type I left mutations to $S\cdot A$ until there are no Type I left collisions between beads in $S\cdot A$.  The mutations may be applied in any order.  Call the resulting tuple $M_I(S\cdot A)$\\
3)  Apply Type III left mutations to $M_I(S\cdot A)$ until there are no Type III left collisions between beads in $M_I(S\cdot A)$.  The mutations may be applied in any order.  Call the resulting tuple $M_{III}M_I(S\cdot A)$\\
4)  Apply Type II left mutations to $M_{III}M_I(S\cdot A)$ until there are no Type II left collisions between beads in $M_{III}M_I(S\cdot A)$.  The mutations may be applied in any order.  Call the resulting tuple $M_{II}M_{III}M_I(S\cdot A)$.\\
5)  Apply Type III left mutations to $M_{II}M_{III}M_I(S\cdot A)$ until there are no Type III left collisions between beads in $M_{II}M_{III}M_I(S\cdot A)$.  The mutations may be applied in any order.  Define the resulting tuple to be $S\circ A$.\\
6)  Define $S^{-1} \circ A$ in analogy with 1-5) above, but replacing the word "left" with "right".\\
7)  Let $\sigma \circ A := \sigma \cdot A$.
\end{definition}

This action is illustrated in Figure \ref{beads8} below.  If the red bead in the upper figure is moved to the left, it undergoes mutations of Type I, III, II, and III, which produces the lower figure.

\begin{remark}
The procedure for computing the action is chosen to simplify the proof that $S\circ A$ is a well-defined colored bead arrangement.  In fact, one can apply mutations to resolve collisions in any order.  The final colored bead arrangement remains the same, as does the total number of mutations.  However, we do not prove this.
\end{remark}

One could define an alternative procedure, in which the Type II mutations occur in Step 3, followed by Type I mutations in Step 4, with the rest of the instructions unchanged.  It is easy to check that applying $M_{II}$ to a bead with a Type III collision produces a bead with a Type I collision, and that $M_IM_{II} = M_{II}M_{III}$.  Thus the two procedures diverge at Step 3 but reconverge at Step 4.  The careful reader may have noticed that the action of $S^{-1}$ does not actually perform the inverse operations of the original procedure in reverse order; instead, it performs the inverse operations of the alternative procedure in reverse order.  Since the two procedures are equivalent, we have that $S$ and $S^{-1}$ act inversely.

\begin{figure}[H]
\centering
\begin{tikzpicture}[x = 0.5 cm, y = 0.5 cm]
\draw[thick] (0,0) -- (16, 0) ;
\foreach \x in {0, 1, 2, 3, 4, 5, 6, 7, 8, 9, 10, 11, 12, 13, 14, 15, 16}
	\draw (\x, 2pt) -- (\x, -2pt);

\draw (1, 0) -- (16, 0) -- (16, 2) -- (15, 2) -- (15,1) -- (2,1) -- (2,2) --(1,2) -- cycle;
\draw (2, 1) rectangle (5, 2);
\draw[red, thick] (5, 1) -- (11, 1) -- (11, 3) -- (10, 3) -- (10, 2) -- (6, 2) -- (6, 3) -- (5, 3) -- cycle;
\draw (7, 2) rectangle (10, 3);
\draw (12, 1) rectangle (15, 2);
\end{tikzpicture}

\vspace{10 pt}

\begin{tikzpicture}[x = 0.5 cm, y = 0.5 cm]
\draw[thick] (0,0) -- (16, 0) ;
\foreach \x in {0, 1, 2, 3, 4, 5, 6, 7, 8, 9, 10, 11, 12, 13, 14, 15, 16}
	\draw (\x, 2pt) -- (\x, -2pt);

\draw (1, 0) -- (16, 0) -- (16, 2) -- (15, 2) -- (15,1) -- (2,1) -- (2,2) --(1,2) -- cycle;
\draw (2, 1) rectangle (5, 2);
\draw[red, thick] (6, 1) -- (12, 1) -- (12, 3) -- (11, 3) -- (11, 2) -- (7, 2) -- (7, 3) -- (6, 3) -- cycle;
\draw (7, 2) rectangle (10, 3);
\draw (12, 1) rectangle (15, 2);
\end{tikzpicture}

\caption{Shift the red bead (top figure) one to the left to produce the bottom figure.  $n = 5$, $d=1$}
\label{beads8}
\end{figure}
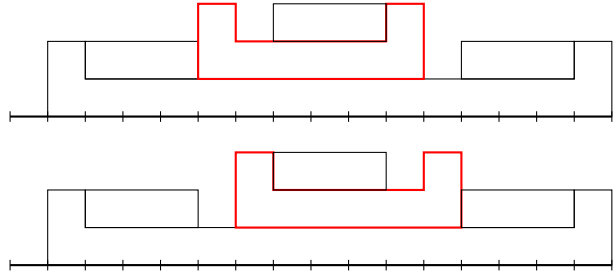

\begin{proposition}
Let all notation be as in Definition \ref{bead action}.   The algorithm defining $S\circ A$ terminates and $S\circ A \in CBA$.  The action of $\Xi$ on $CBA$ is well-defined.
\end{proposition}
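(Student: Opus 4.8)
The plan is to establish, in order: (a) each of the five loops in Definition~\ref{bead action} terminates; (b) the output tuple $S\circ A$ has pairwise non-overlapping beads, so lies in $CBA$; and (c) the resulting assignment is a genuine action of $\Xi=\mathrm{Free}(\mathcal{P}'(n))\rtimes\mathfrak{S}_n$. Parts (a) and (b) are purely combinatorial and amount to tracking, under each individual mutation, a numerical monovariant together with the way the \emph{types} of the colliding beads change; part (c) is essentially formal once (a) and (b) are in hand.

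For termination I would give a strict monovariant for each loop. Steps~2, 3 and~5 use only Type~I and Type~III mutations, which send $B_{l_1}(i_1)$ to $B_{l_1+l_2}(i_1)$ and to $B_{l_1-l_2}(i_1-l_2(d+2))$ respectively: the type of the mutated bead strictly increases in the first case and strictly decreases in the second, the remaining beads being unaffected, and a short length count (the overlap in a Type~I collision is one ridge) bounds the types that can occur on a wire of circumference $P$, so these loops halt. For Step~4 one checks that a Type~II mutation of $B^1$ against $B^2$ places $M_{II}(B^1)$ strictly inside the well of $B^2$; since Step~4 sees no Type~I or Type~III collisions, any further Type~II collision of $M_{II}(B^1)$ must be with a bead that is itself contained in the well of $B^2$, so each Type~II mutation nests the mutated bead one level deeper and at most $n$ of them are possible. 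Along the way one also verifies that the mutation invoked at each stage is unambiguous, i.e.\ that the bead it acts on has a collision of the relevant type with a unique partner.

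For $S\circ A\in CBA$ I would analyse how a single mutation changes the collision pattern of a bead, using the facts previewed after Definitions~\ref{bead collisions} and~\ref{bead mutations}: after a Type~I mutation the pair $(B^1,B^2)$ becomes Type~II left adjacent and no \emph{new} Type~III collision of $B^1$ is created on its right; after a Type~III mutation $(B^1,B^2)$ becomes Type~I left adjacent; and after a Type~II mutation $(B^1,B^2)$ becomes Type~III left adjacent while $M_{II}(B^1)$ acquires, at most, a Type~III collision on its left --- never a new Type~I or Type~II collision among the beads still in play. The five phases are arranged precisely so that these transitions cascade downhill: Step~2 eliminates all Type~I collisions (clearing within the same loop any new Type~I collisions it spawns, and merely exposing Type~III ones), Step~3 eliminates all Type~III collisions (each clearance producing only a Type~I collision, which is already absent on the side in question), Step~4 eliminates all Type~II collisions, and Step~5 clears the Type~III collisions produced by Step~4 --- with no further type to feed into. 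Hence after Step~5 no collision of any type remains between two of the $n$ beads, and by the Remark following Definition~\ref{bead collisions} the beads are pairwise non-overlapping, i.e.\ $S\circ A\in CBA$. I would also check that the output of each loop is independent of the order in which its mutations are performed: each loop terminates, and it is locally confluent because two simultaneously available mutations either act on disjoint beads or reconverge after one more mutation (e.g.\ via $M_IM_{II}=M_{II}M_{III}$), so Newman's lemma applies.

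For part (c): the $\mathfrak{S}_n$-part permutes the entries of the tuple, hence is an action; the semidirect relation $\sigma\circ(S\circ A)=\sigma(S)\circ(\sigma\circ A)$ holds because mutations depend only on the positions of the beads, not their colours, and $\sigma$ carries the set of beads moved by $S$ onto the set moved by $\sigma(S)$; and $S$ and $S^{-1}$ act as mutually inverse bijections of $CBA$ by the argument sketched after Definition~\ref{bead action} --- the prescribed procedure coincides with the variant in which Type~II mutations precede Type~I mutations, and the $S^{-1}$-procedure is exactly that variant inverted and run in reverse, whence $S^{-1}\circ(S\circ A)=A$ and symmetrically $S\circ(S^{-1}\circ A)=A$. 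Since $\mathrm{Free}(\mathcal{P}'(n))$ has no relations, this is all that is needed. The main obstacle is the collision-type bookkeeping in the third paragraph: one must verify case by case, for each mutation type and each local configuration, that the prescribed order genuinely removes every collision and that no phase resurrects a collision type already eliminated --- in particular that a Type~II mutation can only ever create a Type~III collision, which is why a single closing Type~III phase suffices.
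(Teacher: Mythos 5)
Your plan is essentially the paper's own proof: the paper proceeds through Lemmas \ref{step 1 validity}--\ref{step 3 validity} and a final lemma, tracking exactly as you do which collision types can survive each phase (Step 2 leaves only Type II/III, Step 3 only Type II, Step 4 only Type III collisions of moved beads with stationary beads), with termination from the same finiteness counts and uniqueness from commutation of mutations at distinct indices, and with the inverse/semidirect-product checks treated as formal just as you treat them. The only differences are cosmetic --- the paper observes that Step 4 requires at most one Type II mutation per moved bead (no nesting induction is needed) and gets order-independence from commutation within each phase rather than Newman's lemma --- so your route is the same one, correctly laid out.
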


If $S\circ A$ is well-defined, it follows by symmetry that $S^{-1} \circ A$ is well-defined.  It is easy to check that the actions of $S$ and $S^{-1}$ are mutually inverse, hence the action of $Free(\mathcal{P}'(n))$ on $CBA$ is well-defined.  It is also clear that the actions of $Free(\mathcal{P}'(n))$ and $\mathfrak{S_n}$ induce an action of $\Xi$ on $CBA$.  Thus it is enough to show that $S\circ A$ is well-defined.  We prove this with a sequence of lemmas below.

We shall refer to the $i$th entry of any tuple obtained from $S\cdot A$ via a sequence of mutations as a \textbf{moved bead} if $i \notin S$ and a \textbf{stationary bead} if $i \in S$.  It is clear that the only overlaps between beads in $S\cdot A$ are left collisions of moved beads with stationary beads.  Furthermore, a moved bead can have collisions with at most two stationary beads:  a Type I or II collision along its left ridge, and a Type III collision along its right ridge.

\begin{lemma}
\label{step 1 validity}
$M_I(S\cdot A)$ exists; that is, there is a unique tuple in $BT$ which has no Type I left collisions and is obtained from $S\cdot A$ by a finite sequence of Type I left mutations.  The only overlaps between beads in $M_I(S\cdot A)$ are Type II or II left collisions of moved beads with stationary beads.
\end{lemma}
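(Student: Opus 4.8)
The plan is to work out explicitly what a sequence of Type I left mutations does to $S\cdot A$, starting from two elementary structural facts about the arrangement $A$ itself. First I would record that no two beads of a bead arrangement can share a left endpoint, and no two can share a right endpoint: if beads $B_l(i)$ and $B_r(j)$ had a common endpoint of the same type, then one would fail to be contained in the well of the other and also fail to lie outside of it, so by Definition \ref{overlap} they would overlap, contradicting $A\in CBA$. The immediate consequence is that in $S\cdot A$ each moved bead $B^i(-1)$ has \emph{at most one} Type I left collision (a second would force two stationary beads to share the right endpoint $i_i-l_i(d+2)$), likewise at most one Type III left collision, and it cannot simultaneously have a Type I and a Type II collision on its left ridge (that configuration forces two stationary beads to overlap in $A$). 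Hence $M_I^i$ is unambiguously defined wherever the algorithm calls for it.

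Next I would track a single Type I mutation. If $B^i(-1)=B_{l_i}(i_i-1)$ has a Type I left collision with a stationary $B^j=B_{l_j}(i_j)$, then $i_j=i_i-l_i(d+2)$ and $M_I(B^i(-1))=B_{l_i+l_j}(i_i-1)$; this bead keeps the right endpoint $i_i-1$ (so any Type III collision on its right is untouched) and its left endpoint lands exactly one unit past the left endpoint of $B^j$. As an interval it is $(B^i\cup B^j)(-1)$, where $B^i\cup B^j$ is the genuine interval obtained by abutting $B^j$ to the left of $B^i$. A short endpoint check then shows that the only collision the grown bead can newly acquire on its left ridge is again a Type I collision, occurring exactly when a stationary bead abuts its new left endpoint from the left, and that it cannot newly collide with a moved bead (a moved bead to its left was itself shifted, so it now meets the grown bead only at a single point). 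Iterating, for each moved bead $B^i$ I would define the \textbf{left run} $R^i=(B^{j_1},\dots,B^{j_{k_i}})$ to be the maximal chain of stationary beads of $A$ laid end-to-end immediately to the left of $B^i$; the Type I mutations applied to $B^i(-1)$ are then precisely the successive absorptions of $B^{j_1},\dots,B^{j_{k_i}}$, terminating in $C^i:=B_{L_i}(i_i-1)$ with $L_i=l_i+\sum_m l_{j_m}$ and $C^i=(B^i\cup R^i)(-1)$.

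Given this picture, the three remaining assertions follow. For termination: $B^i,B^{j_1},\dots,B^{j_{k_i}}$ are pairwise non-overlapping beads of $A$ abutting consecutively, so a count of distinct points on the arc they cover forces $l_i+\sum_m l_{j_m}\le n$; thus the type of each moved bead never exceeds $n$ and only finitely many mutations occur. For uniqueness of $M_I(S\cdot A)$: the left runs of distinct moved beads are disjoint collections of beads (chasing the chain of abutting stationary beads rightward from any member of $R^i$ lands on $B^i$ and on no other moved bead), so the moved beads compete for nothing and the terminal position $C^i=(B^i\cup R^i)(-1)$ is independent of the order of mutations. For the classification of overlaps: stationary beads are untouched, hence pairwise non-overlapping; the intervals $B^i\cup R^i$ are pairwise non-overlapping unions of beads of $A$, and shifting all of them by $-1$ preserves this, so the $C^i$ are pairwise non-overlapping; and if a moved $C^i$ meets a stationary $B^j\notin R^i$, then $B^j$ and $B^i\cup R^i$ are non-overlapping in $A$, so they are either disjoint or abutting (which survives the shift, giving no overlap), or $B^j$ contains $B^i\cup R^i$ (the shift keeps $C^i$ inside the well of $B^j$ unless $B^i\cup R^i$ touched the inner left edge of $B^j$, in which case the shift produces exactly a Type II left collision of $C^i$ with $B^j$), or $B^j$ sits inside the well of a constituent of $B^i\cup R^i$ (the shift keeps $B^j$ inside the well of $C^i$ unless that constituent is $B^i$ itself and $B^j$ touched its inner right edge, in which case the shift produces exactly a Type III left collision).

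The hard part, I expect, will be this last classification: one must follow each endpoint through the $-1$ shift in every nesting configuration and verify that the surviving intersections are honest Type II or Type III collisions and not some partial overlap of an unlisted kind, including the degenerate numerology when $d=0$. What keeps the bookkeeping tractable is the structural ``no shared endpoints'' fact together with the identity $C^i=(B^i\cup R^i)(-1)$, which reduces every case to comparing a single shifted interval against a fixed one.
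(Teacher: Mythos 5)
Your proposal is correct in substance but organized quite differently from the paper's proof. The paper argues locally and inductively: it analyzes what a single application of $M_I^i$ does, observes that the only collisions the grown bead can acquire on its (moved) left ridge are a Type I or Type II collision with a stationary bead, maintains through the induction the invariant that all overlaps are left collisions of moved beads with stationary beads, obtains termination because each Type I mutation adds one more stationary bead to the well of the moved bead, and obtains uniqueness from the commutativity of the operators $M_I^i$. You instead compute the outcome of the entire Type I phase in closed form, $C^i=(B^i\cup R^i)(-1)$ with $R^i$ the maximal left run of abutting stationary beads, prove termination via the bound $l_i+\sum_m l_{j_m}\le n$, get uniqueness from disjointness of the runs, and then classify the residual overlaps directly on the final configuration. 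The closed-form description buys a sharper uniqueness statement and makes explicit which stationary beads each moved bead absorbs; the paper's invariant-style argument never has to compare a composite interval against the rest of the arrangement, which is exactly where your version must work hardest.

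Two points need tightening. First, your intermediate claim that the grown bead can only newly acquire a Type I collision on its left ridge is literally false: when the run terminates flush against the left wall of an enclosing stationary bead's well, the last absorption creates a Type II collision. This is harmless, since only Type I collisions drive the absorption process and your final classification treats the Type II case correctly, but the sentence should be corrected. Second, the assertion that the union-beads $B^i\cup R^i$ do not overlap one another, nor overlap a stationary bead outside the run, is not a formal consequence of their constituents being pairwise non-overlapping: one must exclude the ``crossing'' configuration and show that any containment lands inside a well, using the no-shared-endpoints fact together with maximality of the run (the latter is what rules out a stationary bead abutting the left end of $R^i$, which would otherwise produce an overlap after the shift despite your ``disjoint or abutting'' case). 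That is precisely the endpoint-chasing you defer to in your last paragraph; once it is written out, the argument goes through.
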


\begin{proof}
Write $A = (B^1, \cdots, B^n)$.  Let $B^i(-1)$ be a moved bead in $S\cdot A$ which has a Type I collision with a stationary bead $B^j$.  Apply $M^i_I$ to $S\cdot A$.  The right ridge of $M_I(B^i(-1))$ has not moved and thus causes no new overlaps.  The left ridge of $M_I(B^i(-1))$ is one unit to the left of the left ridge of $B^j$, so that $B^j$ lies in the well of $M_I(B^i(-1))$.

If $M_I(B^i(-1))$ overlaps with some bead $B\neq B^i(-1)$ in $S\cdot A$, but not in a Type I, II, or III collision, then one of the ridges of $B$ intersects $M_I(B^i(-1))$ somewhere other than its left ridge.  The ridge of $B$ cannot overlap with the left ridge of $B^i(-1)$, since $B^i(-1)$ already has a Type I left collision with $B^j$; the ridge of $B$ cannot be in any other position, since this would result in a forbidden overlap with either $B^j$ or $B^i(-1)$ in $S\cdot A$.  Thus $M_I(B^i(-1))$ overlaps with another bead in $M_I^i(S\cdot A)$ if and only if it has a Type I, II, or III collision.

Since applying $M_I^i$ does not move the right ridge of $B^i(-1)$, Type III collisions are not affected by $M_I$.  There are three possibilities for the left ridge:

The first possibility is that the left ridge of $M_I(B^i(-1))$ does not intersect with any other bead.  In this case, $M_I(B^i(-1))$ no longer has a Type I collision with any other bead.

The second possibility is that the left ridge of $M_I(B^i(-1))$ overlaps with the left ridge of a bead $B^k$.  Note that $B^k$ must be a stationary bead, since its left ridge is adjacent to that of the stationary bead $B^j$.  Furthermore, both $B^i$ and $B^j$ must lie in the well of $B^k$.  In this case, $M_I(B^i(-1))$ has a Type II collision with $B^k$, but no longer has any Type I collisions.

The third possibility is that the left ridge of $M_I(B^i(-1))$ overlaps with the right ridge of a bead $B^k$.  Again $B^k$ must be a stationary bead.  Then $M_I(B^i(-1))$ has a Type I collision with $B^k$.  In this case, we apply another Type I mutation to $M_I(B^i(-1))$, repeating the process until we are in either of the first two situations.  Each time we apply a Type I mutation, a new stationary bead is added to the well of $B^i(-1)$; since there are only finitely many such beads, this process must terminate after finitely many steps.

We have shown that for each $i$, there is some $k_i \ge 0$ such that $(M^i_{I})^{k_i}(S\cdot A)$ has no Type $I$ collisions with any other entry.  Furthermore, the only overlaps between beads in $(M^i_{I})^{k_i}(S\cdot A)$ are left collisions of moved beads with stationary beads, and the number of beads with a Type I collision has decreased by one.  Note that the above argument applies verbatim if $S\cdot A$ is replaced with $(M^i_{I})^{k_i}(S\cdot A)$.  Applying the argument at the index of each bead with a Type I collision, we obtain the desired tuple $M_I(S\cdot A)$.

Uniqueness of $M_I(S\cdot A)$ follows immediately from the fact that $M^i_I$ and $M^j_I$ commute for all $i$ and $j$.
\end{proof}

\begin{lemma}
\label{step 2 validity}
$M_{III}M_I(S\cdot A)$ exists; that is, there is a unique tuple in $BT$ which has no Type I or III left collisions and is obtained from $M_I(S\cdot A)$ by a finite sequence of Type III left mutations.  The only overlaps between beads in $M_{III}M_I(S\cdot A)$ are Type II left collisions of moved beads with stationary beads.  
\end{lemma}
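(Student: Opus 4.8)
The plan is to mirror the proof of Lemma \ref{step 1 validity}, with Type III mutations playing the role that Type I mutations played there. First I would record the structural facts carried over from the previous lemma: Type I mutations only ever extend a bead to the left, so passing from $S\cdot A$ to $M_I(S\cdot A)$ moves no right ridge; hence the Type III left collisions present in $M_I(S\cdot A)$ are exactly those already present in $S\cdot A$, each being a collision of a moved bead with a stationary bead along the moved bead's right ridge. I would also note that no two moved beads can share a right ridge: in $S\cdot A$ the moved beads are $(-1)$-shifts of pairwise non-overlapping beads of $A$, and two beads with a common right endpoint necessarily overlap; this property survives $M_I$, and two stationary beads likewise cannot share a right ridge.

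Next I would analyse a single Type III mutation. Fix a moved bead $B = B_l(i)$ having a Type III left collision with a stationary bead $B' = B_{l'}(i)$ (so $l > l'$ and $B'$ lies in the well of $B(1)$), and replace $B$ by $M_{III}(B) = B_{l-l'}(i - l'(d+2))$. The left endpoint is unchanged, and the part of $B$'s interior that is deleted is exactly $B' \cap (\text{interior of } B)$. Consequently the left ridge of $B$ is untouched, so any Type II left collision of $B$ with a stationary bead is preserved and no new overlap is created on the left; the only thing to control is the new right ridge $[[\,i - l'(d+2) - 1,\ i - l'(d+2)\,]]$, which lies strictly inside $B$'s old interior. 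Any bead meeting this ridge, other than $B'$ itself (which meets it only at the single point $i - l'(d+2)$, i.e.\ sits Type I adjacent to $M_{III}(B)$), was contained in $B$'s well in $M_I(S\cdot A)$, and I would rule out the bad cases by pushing each putative overlap back to a forbidden overlap in the colored bead arrangement $A$: a bead whose right ridge coincides with the new one would be a second moved bead sharing a right ridge with $B$, unless it is stationary, in which case $M_{III}(B)$ has a fresh Type III collision to be resolved by a further Type III mutation (and only one such stationary bead can occur); a bead whose left ridge coincides with the new one would overlap $B'$ illegally in $M_I(S\cdot A)$. The unique remaining bead, a moved bead having a Type II collision with $B'$ and meeting the deleted part of $B$'s well, ends up Type I adjacent to $M_{III}(B)$, again with no overlap. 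Thus a single Type III mutation creates no forbidden overlap, strictly decreases the type of the mutated bead, preserves the ``one Type III collision per bead'' property, and never increases the number of Type III collisions.

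Granting this, the lemma follows as in Lemma \ref{step 1 validity}: iterating Type III mutations at a fixed index terminates because the type of the bead is a positive integer that strictly decreases, so after finitely many mutations all Type I and III collisions are gone, and by the analysis above the remaining overlaps are exactly the Type II left collisions of moved beads with stationary beads, inherited unchanged from $M_I(S\cdot A)$. Uniqueness of $M_{III}M_I(S\cdot A)$ follows as before from the fact that the operators $M_{III}^i$, iterated at each index until that index is collision-free, commute for distinct $i$: they shorten disjoint beads and, by the case analysis, the shortening at one index does not interfere with the collision behaviour at another. The main obstacle is precisely that case analysis for the new right ridge — in particular excluding a second moved bead lying on it and excluding any reappearance of a Type I or Type II collision there; everything else is bookkeeping parallel to Lemma \ref{step 1 validity}.
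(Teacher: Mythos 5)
Your overall strategy is the paper's: analyze one Type III mutation at a time, note that the left ridge is fixed so nothing new happens on the left, control the relocated right ridge, observe that the only possible new collision is a further Type III collision with a stationary bead, terminate by a strictly decreasing quantity, and get uniqueness from commutativity of the $M_{III}^i$. But the step you yourself single out as the main obstacle --- excluding a \emph{moved} bead whose right ridge lands on the new right ridge --- is not justified by the argument you give. The new right ridge of $M_{III}(B)$ is $[[\,i-l'(d+2)-1,\ i-l'(d+2)\,]]$, a different segment from $B$'s old right ridge $[[i-1,i]]$; a bead whose right ridge coincides with the new one therefore does not ``share a right ridge with $B$,'' and your earlier observation that no two moved beads of $M_I(S\cdot A)$ share a right ridge does not apply. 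That observation holds precisely because $M_I$ never moves a right ridge, so the moved beads' right ridges are $(-1)$-shifts of right ridges of pairwise non-overlapping beads of $A$; once $M_{III}$ relocates a right ridge, whether some other moved bead's right ridge sits at the new location is exactly what must be checked, and appealing to the pre-mutation invariant is circular.

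The fact you need is true, but for the reason the paper gives: the new right ridge abuts the left endpoint $i-l'(d+2)$ of the stationary bead $B'$. If a bead $C$ whose right ridge coincides with the new ridge were a moved bead, then (since Type I mutations do not move right ridges) its right endpoint back in $A$ would be $i-l'(d+2)+1$, a point interior to the stationary bead $B'$, which forces a forbidden overlap of $C$ with $B'$ in $A$; hence $C$ is stationary and the new overlap is an honest Type III collision, removed by the next iteration exactly as you argue. Two smaller imprecisions in the same case analysis: a stationary bead with which $B$ has a Type II collision also contains the new ridge yet is not in $B$'s well (harmless --- that Type II collision simply persists, as the statement of the lemma allows), and you never treat a bead of $B$'s well whose own well would contain the new ridge; that configuration is impossible, but by the same comparison with $B'$ that you use for the left-ridge case, not by anything you have written. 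With these repairs your proof coincides with the paper's.
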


\begin{proof}
We use the same notation as Lemma \ref{step 1 validity}.

Let $B^{i'}  = (M_I)^{k_i}(B^i(-1))$ be a moved bead in $M_I(S\cdot A)$ which has a Type III collision with a stationary bead $B^j$.  Apply $M^i_{III}$ to $M_I(S \cdot A)$.  The left ridge of $M_{III}(B^{i'})$ is unchanged and causes no new overlaps; in particular, $M_{III}(B^{i'})$ has no Type I collisions.  The right ridge of $M_{III}(B^{i'})$ is one unit to the right of the left ridge of $B^j$, so the two beads no longer intersect.

Note that it is not possible for the right ridge of $M_{III}(B^{i'})$ to overlap with the left ridge of a bead $B$ in $M_I(S\cdot A)$.  If this were the case, then $B^k$ would overlap with either $B^j$ or $B^{i'}$, and neither overlap would be a collision.  This contradicts our our construction of $M_I(S\cdot A)$.  More generally, it is not possible for $M_{III}(B^{i'})$ to overlap with any bead $B$ in $M_I(S\cdot A)$ except in a left collision. Thus, we need only consider the two possibilities for the right ridge of $M_{III}(B^{i'})$:

The first possibility is that the right ridge of $M_{III}(B^{i'})$ does not overlap with the right ridge of any other bead.  In this case, $M_{III}(B^{i'})$ no longer has a Type III collision.

The second possibility is that the right ridge of $M_{III}(B^{i'})$ overlaps with the right ridge of a bead $B^k$.  Since the right ridge of $B^k$ is adjacent to the stationary bead $B^j$, $B^k$ must be a stationary bead, and $M_{III}(B^{i'})$ has a Type III collision with $B^k$.  Since each Type III mutation removes a bead from the well of $B^{i'}$, there is some $l_i$ such that $(M_{III})^{l_i}(B^{i'})$ has no Type III collisions.

We have shown that for each $i$, there exists $l_i$ such that $(M_{III}^i)^{l_i}M_I(S\cdot A)$ has no Type I or III collisions with any other entry of $M_I(S\cdot A)$.  The only overlap between beads in $(M_{III}^i)^{l_i}M_{I}(S\cdot A)$ are Type II collisions of moved beads with stationary beads, and the number of beads with a Type III collision has decreased by one.  Once again, we can apply the same argument to $M_{III}^{l_i}M_{I}(S\cdot A)$ at each remaining bead with a Type III collision.  This produces the desired tuple $M_{III}M_{I}(S\cdot A)$.

Uniqueness of $M_{III}M_{I}(S\cdot A)$ follows from the fact that $M^i_{III}$ and $M^j_{III}$ commute for all $i$ and $j$.
\end{proof}

\begin{lemma}
\label{step 3 validity}
$M_{II}M_{III}M_I(S\cdot A)$ exists; that is, there is a unique tuple in $BT$ which has no Type I or II left collisions and is obtained from $M_{III}M_I(S\cdot A)$ by a finite sequence of Type II left mutations.  The only overlaps between beads in $M_{II}M_{III}M_I(S\cdot A)$ are Type III left collisions of moved beads with stationary beads.
\end{lemma}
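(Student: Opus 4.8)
The plan is to mirror, step for step, the proofs of Lemmas \ref{step 1 validity} and \ref{step 2 validity}. By Lemma \ref{step 2 validity} the only overlaps in $M_{III}M_I(S\cdot A)$ are Type II left collisions of moved beads with stationary beads, and I would first note that each stationary bead is involved in at most one of them: two moved beads with a Type II left collision with the same stationary bead $B^j$ would share the left ridge of $B^j$, hence would overlap one another, which is impossible. Fix, then, a moved bead $B^{i'}$ with a Type II left collision with a stationary bead $B^j = B_{l_j}(i_j)$, and apply $M_{II}^i$.

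The geometric content of the mutation, read off from Definition \ref{bead mutations}, is that $M_{II}(B^{i'})$ occupies exactly the part of the well of $B^j$ lying to the right of $B^{i'}$: its left ridge is the old right ridge of $B^{i'}$, its right endpoint is $i_j - 1$, and the whole bead is contained in the well of $B^j$ (touching that well's right boundary). From this I get at once that $M_{II}(B^{i'})$ no longer overlaps $B^j$ and that any bead overlapping $M_{II}(B^{i'})$ must lie in the well of $B^j$. Such a bead cannot sit to the left of $B^{i'}$ (the left ridge of $B^{i'}$ is the left wall of $B^j$, so nothing not buried in the old well of $B^{i'}$ lies there) nor inside the old well of $B^{i'}$ (that region is disjoint from $M_{II}(B^{i'})$); so it lies strictly to the right of $B^{i'}$ in the well of $B^j$. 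If its right endpoint is at most $i_j - 2$ it is swallowed by the well of $M_{II}(B^{i'})$ and no overlap occurs; the only remaining possibility is that its right endpoint is exactly $i_j - 1$. Here a divisibility check — every bead has length a multiple of $d+2\ge 2$, while $B^{i'}$ and $B^j$ having a common left ridge makes $i_j - i_{i'}$ a multiple of $d+2$ — rules out the degenerate case in which such a bead also abuts the old right endpoint of $B^{i'}$, and in the surviving case $M_{II}(B^{i'})$ has a genuine Type III left collision with it. One checks similarly that no Type I or Type II collision is created: both would require a bead whose ridge coincides with the new left ridge of $M_{II}(B^{i'})$, namely the old right ridge of $B^{i'}$, and such a bead would already have had a Type I collision with $B^{i'}$ in $M_{III}M_I(S\cdot A)$, which is excluded.

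It remains to see that the new Type III collision is always against a \emph{stationary} bead. If a moved bead $B^{k'}$ were to sit in the well of $B^j$ with right endpoint $i_j - 1$, I would trace it back through the $M_I$ and $M_{III}$ steps. If no Type III mutation had been applied to its ancestor, then (since $M_I$ fixes right endpoints) its right endpoint equals the right endpoint of the corresponding bead of $A$ minus one, forcing that bead of $A$ to share its right endpoint, hence to overlap, with $B^j$. If a Type III mutation had been applied, then by Definition \ref{bead mutations} its right endpoint equals the left endpoint of the stationary bead whose collision was last resolved, and that left endpoint being $i_j - 1$ would make the left ridge of that stationary bead coincide with the right ridge of $B^j$ — again an overlap of two beads of $A$. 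Either way this contradicts $A \in CBA$, so no such $B^{k'}$ exists.

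Finally, since applying $M_{II}^i$ removes one Type II left collision and, by the above, creates none, the number of Type II left collisions strictly decreases; iterating over the finitely many moved beads with a Type II collision therefore terminates and produces a tuple whose only overlaps are Type III left collisions of moved beads with stationary beads. Uniqueness is immediate because the various $M_{II}^i$ change disjoint entries of the tuple and hence commute. The step I expect to be the most delicate is the back-tracking argument of the third paragraph: certifying that no moved bead can occupy the one position that would spoil the conclusion is the only place where one must appeal to the detailed bookkeeping of Lemmas \ref{step 1 validity} and \ref{step 2 validity} about where moved beads may sit, rather than to a purely local picture of a single mutation.
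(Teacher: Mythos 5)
Your proposal is correct and follows essentially the same route as the paper's proof: analyze each Type II mutation locally inside the well of the colliding stationary bead $B^j$, conclude that the only possible new overlap is a Type III left collision with a (stationary) bead whose right edge sits at the rightmost point of that well, and get uniqueness from commutativity of the $M_{II}^i$. Your divisibility check and the back-tracing through the $M_I$ and $M_{III}$ steps simply make explicit two points the paper asserts tersely (that the residual overlap is a genuine Type III collision, and that its partner, being adjacent to the right ridge of the stationary bead $B^j$, must itself be stationary), so they are welcome elaborations rather than a departure.
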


\begin{proof}
We use the notation of Lemma \ref{step 2 validity}.  Write $B^{i''} = M_{III}^{l_i}(B^{i'})$ for each moved bead in $M_{III}M_{I}(S\cdot A)$.

If $B^{i''}$ has a Type II collision with the stationary bead $B^j$, then $M_{II}(B^{i''})$ lies in the well of $B^j$.  The left ridge of $M_{II}(B^{i''})$ coincides with the right ridge of $B^{i''}$, and the right ridge of $M_{II}(B^{i''})$ is adjacent to the right ridge of $B^j$.  Since $B^{i''}$ did not overlap with any bead except $B^j$, the only possible overlap for $M_{II}(B^{i''})$ is a Type III collision with a bead $B^k$ whose right edge lies at the rightmost point of the well of $B^j$.  Since the right edge of $B^k$ is adjacent to the right ridge of the stationary bead $B^j$, $B^k$ must be a stationary bead.

Note that $B^{i''}$ is the only bead in $M_{III}M_{I}(C\cdot A)$ which has a Type II collision with $B^j$; if another moved bead $B^{r''}$ had a Type II collision with $B^j$, then $B^{i''}$ and $B^{r''}$ would overlap, a contradiction.  In particular, for any $B^{r''}$ which has a Type II collision, $M_{II}(B^{r''})$ and $M_{II}(B^{i''})$ do not overlap.

For each $i$ such that $B^{i''}$ has a Type II collision, apply $M_{II}^{i}$ to $M_{III}M_I(S\cdot A)$; call the resulting tuple $M_{II}M_{III}M_I(S\cdot A)$.  We have shown that the only possible overlaps between beads in $M_{II}M_{III}M_{I}(S\cdot A)$ are Type III overlaps between moved beads which have undergone a Type II mutation and stationary beads.

Uniqueness of $M_{II}M_{III}M_I(S\cdot A)$ follows from the fact that $M_{II}^i$ and $M_{II}^j$ commute for all $i$ and $j$.
\end{proof}

\begin{lemma}
$S\circ A$ exists; that is, there is a unique tuple in $CBA$ which is obtained from $M_{II}M_{III}M_I(S\cdot A)$ by a finite sequence of Type III left mutations.
\end{lemma}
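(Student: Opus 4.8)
The plan is to repeat, almost verbatim, the argument of Lemma~\ref{step 2 validity}, now applied to the tuple $M_{II}M_{III}M_I(S\cdot A)$ in the role previously played by $M_I(S\cdot A)$. First I would record the starting configuration: by Lemma~\ref{step 3 validity} the only overlaps among the beads of $M_{II}M_{III}M_I(S\cdot A)$ are Type III left collisions of moved beads (each of which has undergone a Type II mutation) with stationary beads, and no moved bead has more than one such collision, since two moved beads colliding with the same stationary bead would overlap one another. Hence the offending indices can be treated one at a time and independently.

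Next I would fix such a moved bead $B$, with a Type III left collision against a stationary bead $B^j$, and apply $M_{III}$ at that index. As in Lemma~\ref{step 2 validity}: the left ridge of $M_{III}(B)$ is unchanged, so $M_{III}(B)$ acquires no Type I or Type II left collision; its right ridge moves leftward past $B^j$, so $M_{III}(B)$ and $B^j$ no longer overlap. I would then check, exactly as there, that the only way $M_{III}(B)$ can still overlap another bead is a Type III collision of its new right ridge with the right ridge of a further stationary bead $B^k$ in the well of $B^j$ — any other configuration of ridges would force a non-collision overlap already present in $M_{II}M_{III}M_I(S\cdot A)$, contradicting Lemma~\ref{step 3 validity}. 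Each such Type III mutation removes one bead from the well of the bead being mutated, strictly decreasing its type, so the process terminates after finitely many steps; carrying it out at every offending index yields a tuple with no Type I, II, or III left collisions. Uniqueness and independence of the order of mutation follow, as in the previous lemmas, from the commutativity of $M^i_{III}$ and $M^{i'}_{III}$ for $i\neq i'$: each affected bead is shortened from the right without its left ridge moving, and the affected beads never interact.

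It then remains to observe that the terminal tuple lies in $CBA$. By construction none of its beads admits a left collision of any type; and by Lemmas~\ref{step 1 validity}, \ref{step 2 validity}, \ref{step 3 validity} together with the present argument, every overlap produced at any stage of the procedure is a left collision of Type I, II, or III. Hence the terminal tuple has no overlapping pair of beads at all. Since $n$ mutually non-overlapping beads are necessarily of type at most $n$ (a bead of greater type has length exceeding $P$ and cannot embed in the wire), the terminal tuple is a genuine colored bead arrangement, which we define to be $S\circ A$.

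The main obstacle, as in Lemma~\ref{step 2 validity}, is the case analysis showing that a Type III mutation creates no overlap other than a fresh Type III collision against a stationary bead; this rests essentially on the structural description of $M_{II}M_{III}M_I(S\cdot A)$ supplied by Lemma~\ref{step 3 validity}, so it amounts to careful bookkeeping rather than a new idea. Everything else — termination via the decreasing-type measure, and confluence via commutativity of the $M^i_{III}$ — is then routine.
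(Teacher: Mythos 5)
Your proposal is correct and follows essentially the same route as the paper's own proof: using the structural description from Lemma \ref{step 3 validity}, you show each Type III mutation can only create a new Type III collision with a further stationary bead, terminate via the fact that each mutation removes a bead from the well (decreasing the type), and obtain uniqueness from commutativity of the $M^i_{III}$. The extra closing remarks (that the terminal tuple has no overlaps at all and hence lies in $CBA$) are harmless elaborations of what the paper leaves implicit.
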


\begin{proof}
We use the notation of Lemma \ref{step 3 validity}.  Let $B^{i'''} = M_{II}(B^{i''})$ be a moved bead in $M_{II}M_{III}M_I(S\cdot A)$ which has a Type III collision with some stationary bead $B^k$.  By the previous lemma, $B^{i'''}$ lies in the well of a stationary bead $B^j$.  The left ridge of $B^{i'''}$ does not overlap with any other bead, and the right ridge of $B^{i'''}$ is adjacent to the right ridge of $B^j$ and coincides with the right ridge of $B^k$.  It is clear that $M_{III}(B^{i'''})$ cannot overlap with another bead in $M_{II}M_{III}M_I(S\cdot A)$, except possibly in a Type III collision with a stationary bead $B^l$ which is right adjacent to $B^k$.  Each application of $M_{III}$ removes a bead from the well of $B^{i'''}$, hence there is some $r_i$ such that $(M_{III})^{r_i}(B_{i'''})$ does not overlap with any bead.

Applying $(M^i_{III})^{r_i}$ to $M_{II}M_{III}M_I(S\cdot A)$ at each bead with a Type III collision, we obtain the colored bead arrangement $S\circ A$, which is unique since Type III mutations commute.
\end{proof}


\subsection{Compatibility of Actions}

In this section, we show that $A\dgstab$ admits $(d+1)$-orthogonal maximal extensions, hence $\Xi$ acts on $\widehat{\mathcal{E}}$ by perverse tilts.  (See Sections \ref{Maximal Extensions} and \ref{Action} for definitions and terminology.)  We shall also see that the two actions are compatible via $\Phi$.  We shall need the following three lemmas.

\begin{lemma}
\label{action compatibility 1}
Let $B^1$, $B^2$ be non-overlapping beads.  Then $B^1(-1)$ has a Type $\alpha$ left collision with $B^2$ for some $\alpha \in \{\text{I, II, III}\}$ if and only if $dim\Hom_{A\dgstab}(\Phi(B^1), \Phi(B^2(1))) = 1$.  In this case, any nonzero morphism fits into a triangle
\begin{equation}
\label{mutation triangle}
\Phi(B^2) \rightarrow \Phi(M_{\alpha}(B^1(-1)))(1) \rightarrow \Phi(B^1) \rightarrow \Phi(B^2(1))
\end{equation}

Dually, $B^1(1)$ has a Type $\alpha$ right collision with the bead $B^2$ if and only if $dim_{A\dgstab}\Hom(\Phi(B^2(-1)), \Phi(B^1))=1$.  In this case, any nonzero morphism fits into a triangle
\begin{equation}
\label{mutation triangle right}
\Phi(B^2(-1)) \rightarrow \Phi(B^1) \rightarrow \Phi(M_{\alpha}(B^1(1)))(-1) \rightarrow \Phi(B^2)
\end{equation}
\end{lemma}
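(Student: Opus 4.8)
The plan is to carry out the entire computation in $A\grstab$, using the identification $A\dgstab \cong A\grstab/\Omega(1)$ from Section \ref{Bead Model}: the objects in sight have uniserial representatives $M^1_\ell$, their morphism spaces are given by Theorem 6.12 of \cite{brightbill2018differential}, and the triangles we want come from short exact sequences of graded modules. Write $B^1 = B_{l_1}(i_1)$, $B^2 = B_{l_2}(i_2)$, $m = i_2 - i_1$, $C^1 = B^1(-1)$.

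For the $\Hom$-dimension: replacing the objects by their representatives concentrated in degrees $[-d,0]$ (or dualizing via the $-(d+1)$-Calabi-Yau property) identifies $\Hom_{A\dgstab}(\Phi(B^1),\Phi(B^2(1)))$ with $\Hom_{A\dgstab}(M^1_{l_1}, M^1_{l_2}(m+1))$, whose dimension Theorem 6.12 of \cite{brightbill2018differential} computes: it is at most $1$ (the $M^1_\ell$ are uniserial over a Nakayama algebra) and equals $1$ exactly for $m$ in an explicitly described set $\mathcal{X}$ (a subset of $\Z/P\Z$, or of $\Z/(P/2)\Z$ when $l_2 = \frac{n+1}{2}$). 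The combinatorial heart of the lemma is then: \emph{given that $B^1$ and $B^2$ do not overlap}, $m \in \mathcal{X}$ iff $C^1$ collides with $B^2$, and the type is read off from $m$ --- $m = -l_1(d+2)$ is Type~I, $m = -1$ is Type~III, $m = (l_2-l_1)(d+2)-1$ is Type~II. I would prove this by feeding the description of $\mathcal{X}$ into Lemma \ref{overlap reformulation} (applied to $C^1$ and $B^2$, which reads non-overlap as $\{i_1-1,\,i_1-1-l_1(d+2)\}\cap[[i_2-l_2(d+2)+1,\,i_2]] = \emptyset$) and checking position-by-position that the non-overlap locus meets $\mathcal{X}$ precisely in those three boundary configurations, that each is the named collision, and that they are mutually exclusive. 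I expect this matching --- of the support of the Brauer-tree $\Hom$-formula against the bead geometry, with its periodic bookkeeping and the degenerate case $l_2 = \frac{n+1}{2}$ --- to be the main obstacle.

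For the triangle: in each case a short exact sequence of graded modules read off from the radical series of the relevant $M^1_\ell$ does the job --- Type~I: $0 \to \Phi(B^2) \to M^1_{l_1+l_2}(i_1) \to \Phi(B^1) \to 0$; Type~III: $0 \to Z \to \Phi(B^1) \to \Phi(B^2(1)) \to 0$ with $Z$ the length-$(l_1-l_2)$ submodule of $M^1_{l_1}(i_1)$; Type~II: $0 \to \Phi(B^1)(-1) \to \Phi(B^2) \to Z \to 0$ with $Z$ the length-$(l_2-l_1)$ quotient of $M^1_{l_2}(i_2)$. The Type relations from the first part let one identify the heads and socles here, and the fact that an indecomposable object of a given length is a determined shift of $M^1_\ell$ in $A\dgstab$ identifies $Z$ with $\Phi(M_\alpha(B^1(-1)))(1)$. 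Each such sequence becomes a distinguished triangle in $A\grstab$, hence in $A\dgstab$, and a suitable rotation (using $\Sigma\Phi(B^2) = \Phi(B^2(1))$) turns it into the triangle (\ref{mutation triangle}). The connecting map $\Phi(B^1) \to \Phi(B^2(1))$ is nonzero: otherwise the triangle splits and $\Phi(M_\alpha(B^1(-1)))(1) \cong \Phi(B^1)\oplus\Phi(B^2)$ in $A\dgstab$, impossible since the image of any bead under $\Phi$ is indecomposable and nonzero. Since the $\Hom$-space is one-dimensional, every nonzero morphism gives an isomorphic triangle, and the first half of the lemma is done.

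Finally, the dual statement for right collisions follows from the mirror symmetry built into the definitions: reflecting the wire interchanges left and right collisions and left and right mutations, reverses the sign of every shift, and carries (\ref{mutation triangle}) to (\ref{mutation triangle right}); I would therefore deduce it from the first half rather than repeat the argument.
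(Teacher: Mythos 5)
Your proposal is correct in substance and its computational core coincides with the paper's: the paper also reduces everything to the uniserial graded modules $M^1_\ell$, obtains the triangle from a non-split extension (in the Type I case it realizes the one-dimensional $\Hom$-space as $\Ext^1_{A\grmod}(M^1_{l_1},M^{l_1+1}_{l_1+l_2})$, whose generator gives exactly your short exact sequence $0\to\Phi(B^2)\to M^1_{l_1+l_2}(i_1)\to\Phi(B^1)\to 0$), identifies the middle term via the shift identity $M^{a+1}_{a+b}\cong M^1_b(-a(d+2))$ from Proposition 6.9 of \cite{brightbill2018differential}, treats Types II and III analogously, and handles right collisions by duality. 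Where you genuinely diverge is the ``$\Hom\neq 0\Rightarrow$ collision'' direction: you propose to compute the full support set of $\Hom(M^1_{l_1},M^1_{l_2}(-))$ from Theorem 6.12 and intersect it with the non-overlap locus via Lemma \ref{overlap reformulation}. This works (the intersection is indeed exactly the three values $m=-l_1(d+2)$, $m=-1$, $m=(l_2-l_1)(d+2)-1$, each forcing the named configuration), but it is considerably heavier than the paper's argument, which simply observes that $\Hom(\Phi(B^1),\Phi(B^2(1)))=\Hom(\Phi(B^1(-1)),\Phi(B^2))$ vanishes whenever $B^1(-1)$ and $B^2$ do not overlap, by Proposition \ref{bead-module compatibility 2}, and that, given that $B^1$ and $B^2$ do not overlap, any overlap of $B^1(-1)$ with $B^2$ is automatically one of the three left collisions; you could adopt this and skip the support-matching entirely. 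Two small points to tighten if you carry out your version: the set coming from Lemma \ref{overlap reformulation} for $C^1=B_{l_1}(i_1-1)$ is $\{i_1-1,\,i_1-l_1(d+2)\}$ (not $i_1-1-l_1(d+2)$), and the constraint you must impose is the hypothesis that $B^1$ and $B^2$ do not overlap, not the non-overlap of $C^1$ with $B^2$; and the identification of your $Z$ with $\Phi(M_\alpha(B^1(-1)))(1)$ is an isomorphism in $A\dgstab$ (not in $A\grmod$), so ``indecomposable of given length is a determined shift of $M^1_\ell$'' must be supplemented by the explicit computation of that shift, i.e.\ by Proposition 6.9 of \cite{brightbill2018differential}, exactly as the paper does. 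Your argument that the connecting map is nonzero (splitting would contradict indecomposability of the middle term) together with one-dimensionality of the $\Hom$-space correctly yields that every nonzero morphism fits into the triangle \eqref{mutation triangle}.
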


\begin{proof}
Let $B^1 = B_{l_1}(i_1), B^2 = B_{l_2}(i_2)$.  If $B^1(-1)$ has a Type I collision with $B^2$, we must have that $i_2 = i_1 - l_1(d+2)$ and $l_1+l_2 \le n$.  By Proposition 6.9 of \cite{brightbill2018differential}, $M^1_{l_2}(-l_1(d+2)) \cong M^{l_1+1}_{l_1+l_2}$.  Thus,
\begin{align*}
\Hom_{A\dgstab}(\Phi(B^{1}), \Phi(B^2(1))) &\cong \Hom_{A\dgstab}(M^{1}_{l_1}, M^1_{l_2}(1-l_1(d+2)))\\
& \cong \Hom_{A\grstab}(M^{1}_{l_1}, \Omega^{-1}M^{l_1+1}_{l_1+l_2})\\
& \cong \Ext^1_{A\grmod}(M^{1}_{l_1}, M^{l_1+1}_{l_1+l_2})
\end{align*}
The last space has dimension one.  Any nonzero generator yields a triangle in $D^b(A\grmod)$, which descends to the following triangle in $A\dgstab$:
\begin{equation*}
M^{1+l_1}_{l_1+l_2} \rightarrow M^1_{l_1+l_2} \rightarrow M^1_{l_1} \rightarrow M^{l_1+1}_{l_1+l_2}(1)
\end{equation*}

Applying $(i_1)$ to the triangle, we have $M^1_{l_1+l_2}(i_1) = \Phi(M_I(B^1(-1)))(1)$ and $M^{l_1+1}_{l_1+l_2}(i_1) \cong M^{1}_{l_2}(i_1 - l_1(d+2)) = \Phi(B^2)$.  We have obtained the desired triangle in $A\dgstab$.

If $B^1(-1)$ has a Type II or III collision with $B^2$, the proof is analogous.

Conversely, if $B^1(-1)$ has no left collision with $B^2$, then $B^1(-1)$ and $B^2$ do not overlap, hence $\Hom_{A\dgstab}(\Phi(B^1(-1)), \Phi(B^2)) = 0$ by Proposition \ref{bead-module compatibility 2}.

The proof for right mutations is dual.
\end{proof}

\begin{lemma}
\label{action compatibility 2}
Let $A = (B^1, \cdots, B^n)$ be a colored bead arrangement.  Let $S \subsetneq [n]$.  Let $S\circ A = (C^1, \cdots, C^n)$.  Suppose $C^i$ is a moved bead and $C^j = B^j$ is a stationary bead.  Then $\Hom_{A\dgstab}(\Phi(C^i)(1), \Phi(B^j)) = 0$.

Dually, if $S^{-1}\circ A = (C^1, \cdots, C^n)$, with $C^i$ a moved bead and $C^j = B^j$ a stationary bead, then $\Hom_{A\dgstab}(\Phi(B^j), \Phi(C^i)(-1)) = 0$.
\end{lemma}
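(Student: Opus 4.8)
The plan is to reduce the statement to the already-established fact that $S\circ A$ (and dually $S^{-1}\circ A$) lies in $CBA$, together with the Calabi--Yau duality of $A\dgstab$; no new combinatorial analysis of the mutation algorithm is needed.

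Write $S\circ A = (C^1,\dots,C^n)$. Since $S\circ A \in CBA$, its entries are mutually non-overlapping beads; in particular the moved bead $C^i$ and the stationary bead $C^j = B^j$ do not overlap. By Proposition \ref{bead-module compatibility 2}, the objects $\Phi(C^i)$ and $\Phi(B^j)$ of $A\dgstab$ are then distinct and independent. Recall from Proposition \ref{Calabi-Yau} that $A\dgstab$ is $-(d+1)$-Calabi--Yau, so $\Sigma^{-(d+1)}$ is a Serre functor, where the suspension $\Sigma$ is the grading shift $(1)$; thus $|w| = d+1$. Using $\Phi(C^i)(1) = \Sigma\,\Phi(C^i)$ and Serre duality,
\[
\Hom_{A\dgstab}(\Phi(C^i)(1),\Phi(B^j)) \;\cong\; \Hom_{A\dgstab}(\Phi(C^i),\Phi(B^j)(-1)) \;\cong\; \Hom_{A\dgstab}(\Phi(B^j),\Phi(C^i)(-d))^{*}.
\]
Since $0 \le d < d+1 = |w|$ and $\Phi(C^i) \not\cong \Phi(B^j)$, the definition of independence forces the right-hand side to vanish, which proves the first assertion. (When $d \ge 1$ one can instead read the vanishing of $\Hom(\Phi(C^i),\Phi(B^j)(-1))$ straight off the definition of independence, taking $m=1$; the Serre-duality detour is only needed to also cover the case $d = 0$, where $|w| = 1$.)

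The dual statement is proved the same way: $S^{-1}\circ A$ is again a colored bead arrangement, so a moved bead $C^i$ and a stationary bead $C^j = B^j$ do not overlap, whence $\Phi(C^i)$ and $\Phi(B^j)$ are distinct and independent, and $\Hom_{A\dgstab}(\Phi(B^j),\Phi(C^i)(-1)) = 0$ follows directly from independence if $d \ge 1$ and from Serre duality (as above) if $d = 0$.

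There is essentially no obstacle: the only point requiring attention is that the hypothesis of Proposition \ref{bead-module compatibility 2} is met, i.e.\ that the relevant pair of beads in $S\circ A$ is genuinely non-overlapping, which is precisely the content of the statement (proved in the lemmas immediately preceding this one) that $S\circ A \in CBA$. A more laborious alternative would track, step by step through the algorithm of Definition \ref{bead action}, which collisions a moved bead can still participate in and then invoke Lemma \ref{action compatibility 1}, but this is unnecessary once $S\circ A$ is known to be a colored bead arrangement.
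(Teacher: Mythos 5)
Your proof is correct, but it takes a different route from the paper's. The paper proves the lemma by examining the \emph{shifted} bead $C^i(1)$: since $C^i$ and $B^j$ do not overlap, either $C^i(1)$ and $B^j$ do not overlap (and Proposition \ref{bead-module compatibility 2} applies at $m=0$), or $C^i(1)$ has a right collision of Type I, II, or III with $B^j$, and in each collision case the Hom space is computed explicitly in $A\grstab$ via the module identifications of Proposition 6.9 of \cite{brightbill2018differential} (e.g.\ $\Hom(M^{1+l_j}_{l_i+l_j}, M^{1+l_j}_1)=0$ for Type I). You instead observe that the desired vanishing is already contained in the independence of the \emph{unshifted} objects: non-overlapping of $C^i$ and $B^j$ in $S\circ A\in CBA$ gives independence of $\Phi(C^i)$ and $\Phi(B^j)$ by Proposition \ref{bead-module compatibility 2}, and $\Hom(\Phi(C^i)(1),\Phi(B^j))\cong\Hom(\Phi(C^i),\Sigma^{-1}\Phi(B^j))$ is the $m=1$ orthogonality condition when $d\ge 1$, while for $d=0$ Serre duality (Proposition \ref{Calabi-Yau}) converts it to the $m=0$ condition for distinct objects. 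This is legitimate -- there is no circularity, since the fact that $S\circ A, S^{-1}\circ A\in CBA$ is established by the purely combinatorial lemmas preceding this one, and it mirrors the reformulation already used in the proof of Proposition \ref{bead-module compatibility}, where Serre duality extends the vanishing range to $0\le m\le d+1$. What your argument buys is brevity and uniformity: no case analysis by collision type and no computations with the modules $M^i_j$. What the paper's computation buys is independence from the Calabi--Yau/Serre-duality input in the collision cases and an explicit module-level picture of what happens at a collision, consistent with the adjacent Lemma \ref{action compatibility 1}; but for the bare vanishing statement your shorter argument suffices.
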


\begin{proof}
Since $C^i$ and $B^j$ are part of a colored bead arrangement, they do not overlap.  Thus either $C^i(1)$ and $B^j$ do not overlap, in which case we are done by Proposition \ref{bead-module compatibility 2}, or $C^i(1)$ has a Type I, II, or III right collision with $B^j$.

Write $C^i = B_{l_i}(k_i)$, $B^j = B_{l_j}(k_j)$.  If there is a Type I right collision, then $k_i = k_j - l_j(d+2)$, hence $\Phi(C^i(1)) = M^1_{l_i}(1+ k_j - l_j(d+2)) \cong M^{1+l_j}_{l_i+l_j}(1 + k_j)$ by Proposition 6.9 of \cite{brightbill2018differential}.  Note that $l_1+l_2 \le n$.  Then,
\begin{align*}
\Hom_{A\dgstab}(\Phi(C^i)(1), \Phi(B^j)) &\cong \Hom_{A\dgstab}(M^{1+l_j}_{l_i+l_j}(1+k_j), M^{1}_{l_j}(k_j))\\
&\cong \Hom_{A\dgstab}(M^{1+l_j}_{l_i+l_j}, M^{1}_{l_j}(-1))\\
&= \Hom_{A\grstab}(M^{1+l_j}_{l_i+l_j}, M^{1+l_j}_{1})\\
&= 0
\end{align*}
The other two cases follow by analogous arguments.

The proof for $S^{-1}$ is dual.
\end{proof}

\begin{lemma}
\label{action compatibility 3}
Let $A = (B^1, \cdots B^n) \in CBA$, $S\subsetneq \{1, \cdots, n\}$.  Let $\mathcal{S} = \langle \{B^j \mid j \in S\} \rangle$.  Write $S\circ A = (C^1, \cdots, C^n)$, and let $i \notin S$.  Then $\Phi(C^i)(1)$ is a maximal extension of $B^i$ by $\mathcal{S}$.

Dually, if $S^{-1} \circ A = (D^1, \cdots, D^n)$, then $\Phi(D^i)(-1)$ is a maximal $\mathcal{S}$-extension by $B^i$.
\end{lemma}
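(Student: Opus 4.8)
The plan is to exhibit, for each $i \notin S$, an explicit morphism $f \colon \Phi(C^i)(1) \to \Phi(B^i)$ and to verify that it meets the criterion of Proposition~\ref{orthogonal extensions} for being a maximal extension of $\Phi(B^i)$ by $\mathcal{S} := \langle \{\Phi(B^j) \mid j \in S\} \rangle$ (which is closed under extensions by definition). The dual assertion, for $S^{-1}\circ A$, will follow by the same argument with every left collision and mutation replaced by its right counterpart and the dual halves of Lemmas~\ref{action compatibility 1} and~\ref{action compatibility 2}. The hypothesis $\Hom(\Phi(B^i),\mathcal{S})=0$ of Proposition~\ref{orthogonal extensions} is immediate: the beads of $A$ are pairwise non-overlapping, so by Proposition~\ref{bead-module compatibility 2} the objects $\Phi(B^i)$ and $\Phi(B^j)$ (for $j\in S$) are distinct and independent, whence $\Hom(\Phi(B^i),\Phi(B^j))=0$, and this propagates to all of $\mathcal{S}$ because $\Hom(\Phi(B^i),-)$ is cohomological.

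The heart of the argument is the construction of $f$ together with the proof that $\Sigma^{-1}C(f)\in\mathcal{S}$. By the construction of $S\circ A$ (Definition~\ref{bead action} and Lemmas~\ref{step 1 validity}--\ref{step 3 validity}), the bead $C^i$ is obtained from $B^i(-1)$ by a finite chain of left mutations $D_0 = B^i(-1), D_1, \dots, D_N = C^i$ with $D_{t+1} = M_{\alpha_t}(D_t)$, where at each stage $D_t$ has a Type $\alpha_t$ left collision with some \emph{stationary} bead $B^{k_t}$, $k_t\in S$. Applying Lemma~\ref{action compatibility 1} to the non-overlapping pair $D_t(1), B^{k_t}$ (noting $D_t(1)(-1)=D_t$, so $D_t(1)$ is Type $\alpha_t$ left adjacent to $B^{k_t}$) yields a triangle $\Phi(B^{k_t}) \to \Phi(D_{t+1})(1) \to \Phi(D_t)(1) \to \Sigma\Phi(B^{k_t})$. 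Starting from $g_0 = \mathrm{id}_{\Phi(B^i)}$ (using $\Phi(D_0)(1)=\Phi(B^i)$) and composing successively with the middle maps of these triangles, I obtain, by induction on $t$ via the octahedral axiom, morphisms $g_t \colon \Phi(D_t)(1) \to \Phi(B^i)$ with $\Sigma^{-1}C(g_t)\in\mathcal{S}$: the octahedron for the composite $g_{t+1}=g_t\circ h_{t+1}$ exhibits $\Sigma^{-1}C(g_{t+1})$ as an extension of $\Sigma^{-1}C(g_t)$ by $\Phi(B^{k_t})$, and $\mathcal{S}$ is extension-closed. Setting $f = g_N$ gives the desired morphism.

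It then remains to check $\Hom(\Phi(C^i)(1),\mathcal{S}) = \Hom(\Phi(C^i)(1),\Sigma\mathcal{S}) = 0$. The first vanishing is exactly Lemma~\ref{action compatibility 2}, applied to each generator $\Phi(B^j)$ and then propagated through $\mathcal{S}$ by the cohomological functor $\Hom(\Phi(C^i)(1),-)$. For the second, since $\Sigma\mathcal{S} = \langle \{\Phi(B^j)(1) \mid j\in S\}\rangle$, it suffices to show $\Hom(\Phi(C^i)(1),\Phi(B^j)(1)) = \Hom(\Phi(C^i),\Phi(B^j)) = 0$ for $j\in S$; but $C^i$ and $B^j$ are non-overlapping beads of the colored bead arrangement $S\circ A$, so Proposition~\ref{bead-module compatibility 2} again gives independence and hence this vanishing. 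Proposition~\ref{orthogonal extensions} now applies and shows $f$ is a maximal extension of $\Phi(B^i)$ by $\mathcal{S}$, which is the claim.

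The main obstacle I anticipate is the bookkeeping in the middle step: one must track carefully the single grading shift $(1)$ that rides along the mutation chain, so that each invocation of Lemma~\ref{action compatibility 1} is applied to $D_t(1)$ rather than $D_t$; one must know that every mutation in the chain indexed by $i$ is a collision with a stationary bead (this is part of what Lemmas~\ref{step 1 validity}--\ref{step 3 validity} establish, but it must be cited precisely); and one must run the octahedral induction cleanly. Everything else is a routine application of the cited lemmas together with standard triangulated manipulations — long exact sequences, the octahedral axiom, and extension-closure of $\langle\,\cdot\,\rangle$.
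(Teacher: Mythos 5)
Your proposal is correct and follows essentially the same route as the paper: build $f\colon \Phi(C^i)(1)\to\Phi(B^i)$ by composing the triangle morphisms supplied by Lemma~\ref{action compatibility 1} along the mutation chain from $B^i(-1)$ to $C^i$, use the octahedral axiom and extension-closure of $\mathcal{S}$ to get $\Sigma^{-1}C(f)\in\mathcal{S}$, and verify the two vanishing conditions of Proposition~\ref{orthogonal extensions} via Lemma~\ref{action compatibility 2} and Proposition~\ref{bead-module compatibility 2}, with the dual argument for $S^{-1}$. The bookkeeping points you flag (the shift $(1)$ riding along the chain, and that every collision resolved is with a stationary bead) are exactly the ones the paper handles implicitly, so there is no gap.
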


\begin{proof}
To simplify notation, we write $M = \Phi(C^i)(1), N = \Phi(B^i)$.

For any $j \in S$, $\Hom(N, \Phi(B^j)) = 0$, hence $\Hom(N, X) = 0$ for any $X \in \mathcal{S}$.  By Proposition \ref{orthogonal extensions}, $M = N_S$ if and only if we have a morphism $f: M \rightarrow N$ such that $C(f)(-1) \in \mathcal{S}$ and $\Hom(M, X) = \Hom(M(-1), X) = 0$ for all $X \in \mathcal{S}$.

To construct the morphism $f$, note that $C^i$ is obtained by applying a sequence of mutations $M_{\alpha_1}, \cdots, M_{\alpha_r}$ to $B^i(-1)$.  For $1 \le k \le r$, write $N_k = \Phi(M_{\alpha_{k}} \cdots M_{\alpha_{1}}B^i(-1))(1)$.  Define $N_0 = N$ and note that $N_r = M$.  For each $1 \le k \le r$, by Lemma \ref{action compatibility 1} we have a morphism $g_k: N_k \rightarrow N_{k-1}$ which fits into the triangle
\begin{equation*}
\Phi(B^{j_k}) \rightarrow N_k \xrightarrow{g_k} N_{k-1} \rightarrow \Phi(B^{j_k})(1)
\end{equation*}
with $j_k \in S$.  Let $f = g_1 \cdots g_r : M \rightarrow N$.

Since $\Phi(B^{j_k}) \in \mathcal{S}$ for each $k$, it follows from the octahedron axiom and induction on $k$ that $C(f)(-1) \in \mathcal{S}$.  Given $j \in S$, we have that $\Hom(M(-1), \Phi(B^j)) = 0$ since the beads $C^i$ and $B^j$ do not overlap.  That $\Hom(M, \Phi(B^j)) = 0$ is precisely the statement of Lemma \ref{action compatibility 2}.  The corresponding statements with $B^j$ replaced by any $X \in \mathcal{S}$ follow immediately.  Thus $M = N_S$.

The proof of the second statement is dual.
\end{proof}

We have established the following theorem:

\begin{theorem}
\label{well-defined}
$A\dgstab$ admits $(d+1)$-orthogonal maximal extensions, hence $\Xi$ acts on $\mathcal{E}$ and $\widehat{\mathcal{E}}$, as in Definition \ref{generator action}.  Furthermore, $\Phi: CBA \rightarrow \widehat{\mathcal{E}}$ is a morphism of $\Xi$-sets.
\end{theorem}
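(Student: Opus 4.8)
The plan is to deduce both assertions directly from the three compatibility lemmas, so that the proof amounts to matching the index set $S$ of the bead action with the subcategory $\mathcal{S}$ of Definition~\ref{generator action} and keeping track of the identification $\Sigma \cong (1)$ in $A\dgstab$.

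First I would show that $A\dgstab$ admits $(d+1)$-orthogonal maximal extensions. Fix a $(d+1)$-orthogonal $n$-tuple $(X_1,\dots,X_n)$, a subset $\mathcal{S}\subset\{X_i\}$, and $X_j\notin\mathcal{S}$. By Proposition~\ref{arrangement-generator compatibility 1}, $(X_i)_i = \Phi(A)$ for some colored bead arrangement $A = (B^1,\dots,B^n)$, and $S = \{\, i \mid X_i \in \mathcal{S}\,\}$ is a proper subset of $[n]$ since $X_j \notin \mathcal{S}$. Writing $S\circ A = (C^1,\dots,C^n)$, Lemma~\ref{action compatibility 3} says $\Phi(C^j)(1)$ is a maximal extension of $X_j = \Phi(B^j)$ by $\langle \mathcal{S}\rangle$, and the dual half of the lemma, applied to $S^{-1}\circ A$, produces the maximal $\langle\mathcal{S}\rangle$-extension by $X_j$. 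By Proposition~\ref{orthogonal extensions} it is harmless to pass to the extension-closure here, so both maximal extensions exist, which is precisely the claim. Consequently, by Proposition~\ref{action justification} and the theorem following it, $\Xi$ acts on $\widehat{\mathcal{E}}$ and on $\mathcal{E}$ as in Definition~\ref{generator action}.

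For the equivariance of $\Phi\colon CBA\to\widehat{\mathcal{E}}$ it suffices to check $\Phi(g\circ A) = g\cdot\Phi(A)$ for $g$ ranging over the generators of $\Xi = Free(\mathcal{P}'(n))\rtimes\mathfrak{S}_n$. For $\sigma\in\mathfrak{S}_n$ both actions merely permute entries and $\Phi$ is applied entrywise, so the identity is immediate. For a proper subset $S$: the entries indexed by $i\in S$ are fixed by both $S\circ(-)$ and $S\cdot(-)$, with matching $\Phi$-value $\Phi(B^i)$. For $i\notin S$, the $i$th entry of $S\cdot\Phi(A)$ is $\Sigma^{-1}(\Phi(B^i))_S$ while the $i$th entry of $S\circ A$ is $C^i$; Lemma~\ref{action compatibility 3} gives $(\Phi(B^i))_S \cong \Phi(C^i)(1)$, and since $\Sigma = (1)$ and $\Phi(C^i)(1) = \Phi(C^i(1)) = \Sigma\Phi(C^i)$, we obtain $\Sigma^{-1}(\Phi(B^i))_S \cong \Phi(C^i) = \Phi((S\circ A)^i)$. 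The case of $S^{-1}$ is dual, using the dual half of Lemma~\ref{action compatibility 3} and $\Sigma^{-1}\Phi(D^i) = \Phi(D^i(-1))$. This establishes that $\Phi$ is a morphism of $\Xi$-sets.

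The genuinely substantive content is already packaged in the three lemmas: Lemma~\ref{action compatibility 1} identifies each elementary bead mutation with a single distinguished triangle whose third term lies in $\mathcal{S}$ (via the $\dim\Ext^1 = 1$ computations for the Brauer tree algebra), Lemma~\ref{action compatibility 2} supplies the orthogonality vanishing that makes an iterated sequence of such mutations a \emph{maximal} extension rather than just an extension, and Lemma~\ref{action compatibility 3} splices them together with the octahedral axiom. Accordingly, the only delicate points in writing the theorem up are organizational: translating between a subset $S\subseteq[n]$ and the extension-closed subcategory $\langle\mathcal{S}\rangle$, tracking the identification $\Sigma\cong(1)$ so that the $\Sigma^{\pm1}$ of Definition~\ref{generator action} align with the shifts $(\pm 1)$ in Lemma~\ref{action compatibility 3}, and observing that well-definedness of the action on $\widehat{\mathcal{E}}$ (first part) guarantees the maximal extension is independent of the chosen bead lift $A$, so that the equivariance assertion is consistent.
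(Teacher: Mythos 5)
Your proposal is correct and follows essentially the same route as the paper: the existence of $(d+1)$-orthogonal maximal extensions and the identity $\Phi(S\circ A)=S\cdot\Phi(A)$ are both read off from Lemma \ref{action compatibility 3} (together with the surjectivity of $\Phi$ from Proposition \ref{arrangement-generator compatibility 1} and the evident $\mathfrak{S}_n$-equivariance). Your version simply spells out the bookkeeping ($\Sigma\cong(1)$, index set $S$ versus $\langle\mathcal{S}\rangle$, uniqueness of the maximal extension) that the paper's proof leaves implicit.
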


\begin{proof}
The first statement follows immediately from Lemma \ref{action compatibility 3}.  The lemma also shows that $\Phi(S\circ A) = S \cdot \Phi(A)$ for any colored bead arrangement $A$ and $S\subsetneq [n]$.  It is clear that any $\sigma \in \mathfrak{S}_n$ commutes with $\Phi$, hence $\Phi$ is a morphism of $\Xi$ sets.
\end{proof}


\subsection{Transitivity}

We now prove that the action of $\Xi$ on $\widehat{\mathcal{E}}$ is transitive.  It will then follow easily that $\mathcal{E} = \widehat{\mathcal{E}}$, hence every orthogonal tuple is a generating tuple.  We shall require two definitions:

\begin{definition}
Let $A$ be a colored bead arrangement, and let $S \subsetneq [n]$.  If $S\circ A = S\cdot A$ (i.e., no mutations occur), we say that $S \cdot A$ and $A$ \textbf{differ by an elementary rigid motion}.  We say two colored bead arrangements \textbf{differ by a rigid motion} if they are connected by a finite sequence of elementary rigid motions.  We say two uncolored or free bead arrangements differ by a rigid motion if they are the images of colored bead arrangements which differ by a rigid motion. 
\end{definition}

Note that applying a rigid motion does not affect the class of a bead arrangement.

\begin{definition}
Let $B$ be a  bead in a colored, uncolored, or free bead arrangement.  We say $B$ is \textbf{right-justified} if $B(1)$ has a Type I or II right collision with another bead.  We say a colored, uncolored, or free bead arrangement $A$ is right-justified if there exists a bead $B$ in $A$ such that all beads $B'\neq B$ in $A$ are right-justified.
\end{definition}

The parameter $d$ determines the amount of empty space in the well of each bead in the arrangement, as well as on the ring.  Thus if $d = 0$, all bead arrangements are right-justified.  If $d>0$, then at most $n-1$ beads can be simultaneously right-justified, since there will always be a bead on the wire which is not right-justified.  Thus in any right-justified bead arrangement, the unique bead which is not right-justified must have height one.  Note that $A = (B_1, B_1(-(d+2)), \cdots B_1(-n(d+2)))$ is right-justified, and that $\Phi(A) = (S_1, \cdots, S_n)$.

It is intuitively clear that any bead arrangement can be converted into a right-justified form via a rigid motion: hold one bead on the wire fixed, and slide all other beads to the right as far as they will go.  More formally:

\begin{lemma}
\label{rigid motion lemma 1}
Let $A \in CBA$.  Then there exists a right-justified bead arrangement $A'$ which differs from $A$ by a rigid motion.
\end{lemma}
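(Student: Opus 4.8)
The plan is to make precise the intuitive picture ``hold one bead fixed and slide everything else to the right until it jams.'' First I would fix a bead $B$ in $A$ to serve as the eventual exceptional (non-right-justified) bead; any bead works, but it is convenient to take one of height one. Then I would set up a non-negative integer-valued complexity function $\mu(A)$ measuring how far $A$ is from being right-justified --- for instance, the total amount of empty space lying immediately to the right of a bead, i.e.\ $\mu(A) = \sum_{B' \neq B} g(B')$, where $g(B')$ is the number of empty points between the right endpoint of $B'$ and the next obstruction (a ridge of the bead directly containing $B'$, or the left endpoint of the next sibling bead, or --- for height-one beads --- the relevant point determined by $B$). The claim is that $\mu(A) = 0$ precisely when $A$ is right-justified with exceptional bead $B$: if some $B' \neq B$ is not right-justified, then $B'(1)$ has neither a Type I nor a Type II right collision, which forces a strictly positive gap to its right, so $g(B') > 0$; conversely if every $B' \neq B$ is right-justified then each such gap is zero.

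Next I would show that whenever $\mu(A) > 0$ one can decrease it by an elementary rigid motion. Pick a bead $B' \neq B$ of minimal height among beads with $g(B') > 0$ that has no bead $B''$ with $g(B'') > 0$ strictly between its right endpoint and its obstruction --- equivalently, a ``rightmost-blocked'' bead together with everything sitting inside it. Let $S = [n] \setminus \{\text{colors of } B' \text{ and all beads contained in } B'\}$; then $S^{-1} \circ A$ slides $B'$ and its entire contents one unit counterclockwise... wait, one must be careful with orientation. Since right-justification is phrased via \emph{right} collisions and positive shifts, the correct move is $S^{-1}$ acting on the \emph{complementary} set: one wants the chosen cluster of beads to move one step to the right while everything else stays put, which is exactly $S \cdot A$ where $S$ is the set of colors NOT in the cluster (so the cluster beads, being outside $S$, get shifted by $-\delta$... ). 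I would sort out the sign bookkeeping by appealing directly to Definition \ref{naive bead action}: choose $S$ so that the beads we wish to move rightward are precisely those indexed outside $S$ and the shift is by $+1$, i.e.\ use the $S^{-1}$-action. The key point is that because the cluster was chosen minimally and with nothing blocked to its right inside its obstruction, a one-step shift of the cluster creates no overlap --- the only beads it could hit are its obstruction (which it does not reach, since $g > 0$) or beads entirely outside it moving in lockstep --- so no mutations occur and this is an elementary rigid motion. Moreover this shift decreases $g(B')$ by one and leaves $g(B'')$ unchanged for the beads $B''$ moving with it (they and their obstructions translate together), and does not increase $g$ for stationary beads, so $\mu$ strictly drops.

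Finally, since $\mu$ takes values in $\mathbb{Z}_{\ge 0}$ and strictly decreases at each step, the process terminates after finitely many elementary rigid motions at an arrangement $A'$ with $\mu(A') = 0$, i.e.\ a right-justified arrangement, and by construction $A'$ differs from $A$ by a rigid motion.

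The main obstacle I anticipate is \emph{step two}: verifying carefully that the chosen cluster-shift genuinely introduces no bead collisions, so that it is an \emph{elementary} rigid motion (no mutations) rather than a general application of $\circ$. This requires a clean choice of which beads to move --- precisely one ``maximal blocked'' bead together with its full set of descendants in the containment order --- and a short case analysis (against the obstruction above it, against its siblings, against beads it contains, all of which translate with it) to rule out overlap after the unit shift. The termination argument and the equivalence ``$\mu = 0 \iff$ right-justified'' are routine once the complexity function is set up correctly; the orientation/sign bookkeeping is a nuisance but not a genuine difficulty, and can be handled by reading off $S$ versus $S^{-1}$ directly from Definition \ref{naive bead action} and the definition of a right collision.
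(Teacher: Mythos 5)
Your overall strategy is the same as the paper's: fix one height-one bead, and repeatedly apply $S^{-1}$ with $S$ the complement of the colors of a chosen bead $B'$ together with everything in its well, so that this cluster slides one unit to the right; your collision analysis for why each such step is an \emph{elementary} rigid motion (the positive gap rules out Type I/II right collisions, and Type III is impossible because the contents of $B'$ move in lockstep) is correct and is exactly the argument in the paper. The genuine gap is in your termination argument. The claim that the unit shift ``does not increase $g$ for stationary beads, so $\mu$ strictly drops'' is false: if $B'$ has a nearest left sibling $C$ (or, at height one, a previous bead on the ring other than the fixed bead $B$), then $C$'s obstruction is the left endpoint of $B'$, and moving the cluster one unit to the right increases $g(C)$ by exactly $1$ while decreasing $g(B')$ by $1$, so $\mu$ is unchanged. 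Concretely, with two beads $C_1, C_2$ in a well and one unit of slack between them and one unit between $C_2$ and the right wall, your ``rightmost-blocked'' rule moves $C_2$ first and $\mu$ stays at $2$; no choice of which blocked bead to move avoids this phenomenon, since the increase happens at the left neighbor regardless. So $\mu \in \mathbb{Z}_{\ge 0}$ strictly decreasing is not available, and as written the induction on $\mu$ does not prove the process halts.

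The argument is repairable, but it needs a different termination device. The paper processes beads by height: for each offending bead it applies $S^{-1}$ repeatedly until that bead is right-justified, and termination is forced because a bead only ever moves rightward inside a well of finite length (and at height one the fixed bead $B^1$, which is never moved, acts as a wall on the ring), while beads of smaller height stay right-justified since only beads inside their wells are moved. Alternatively, you can salvage a potential-function proof by replacing your nearest-obstruction gap $g(B')$ with the \emph{total} empty space between the right endpoint of $B'$ and the right end of the well containing it (for height-one beads, the space between $B'$ and the fixed bead $B$ measured clockwise at the top level): with that definition each cluster shift decreases the potential of $B'$ by one and leaves every other bead's contribution unchanged, because a left sibling's total slack to the wall is merely redistributed, not increased. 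With either fix the rest of your argument (the equivalence ``all gaps zero $\iff$ right-justified'' and the no-collision analysis) goes through.
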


\begin{proof}
Without loss of generality, let $B^1 \in A$ have height $1$.  I claim that there exist colored bead arrangements $\{A_i\}_{i \ge 0}$ such that $B^1 \in A_i$ for all $i$, $A_{i+1}$ differs from $A_{i}$ by a rigid motion, and all beads $B \neq B^1$ in $A^i$ of height at most $i$ are right-justified.

Let $A_{0} = A$.  Given $A_{i} = (B^1, \cdots, B^n)$, if every bead $B^j \neq B^1$ of height $i+1$ is right-justified, take $A_{i+1} = A_{i}$.  Otherwise, if some $B^j \neq B^1$ is not right-justified, let $S = \{1, \cdots, n\} - \{r \mid B^r \subset B^j\}$ and repeatedly apply $S^{-1}$ to $A_{i}$ until $B^j$ becomes right-justified.  Each application of $S^{-1}$ is a rigid motion; no Type I or II right collisions occur because $B^j$ is not right-justified, and no Type III collisions occur because all beads in the well of $B^j$ are moved by $S^{-1}$.  Furthermore, all beads of height $\le i$ (excluding $B^1$) remain right justified, since we have only moved beads lying inside their wells.  Note also that $B^1$ is never moved.

Repeat the process in the above paragraph until all height $i+1$ (except possibly $B^1$) beads are right-justified.  If $i > 0$, there is finite space in the well of each bead, so the process must terminate after finitely many steps; if $i =0$, since $B^1$ is always held fixed and the ring is finite, the process again terminates after finitely many steps.  Once all height $i+1$ beads (except possibly $B^1$) are right-justified, call the resulting bead arrangement $A_{i+1}$.  The existence of the desired family $\{A_i\}$ follows by induction.  Then $A_n$ is right-justified and differs from $A = A_0$ by a rigid motion.
\end{proof}

\begin{definition}
\label{standard form}
We say that a right-justified colored bead arrangement $A = (B^1, \cdots, B^n)$ is in \textbf{standard form} if:\\
1)  After identifying each bead with its right endpoint, $B^1 > B^2 > \cdots > B^n > B^1$ with respect to the cyclic order on $\mathbb{Z}/P\mathbb{Z}$.\\
2)  For all $j \neq 1$, $B^j$ is right-justified.
\end{definition}

Any right-justified colored bead arrangement $A = (B^1, \cdots, B^n)$ can be put in standard form by permuting its entries.  Furthermore, if $A$ is in standard form, then $A$ is uniquely determined by two pieces of data:  the associated tree $(P(A), r_A)$ and the choice of $B^1$:  Once the position of $B^1$ is known, one can reconstruct $A$ as an uncolored bead arrangement by simply placing each bead specified by $P(A)$ as far to the right as possible.

\begin{lemma}
\label{rigid motion lemma 2}
Up to permutation of indices, any two right-justified colored bead arrangements of the same class differ by a rigid motion.
\end{lemma}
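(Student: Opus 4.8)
The plan is to reduce both arrangements to standard form (Definition~\ref{standard form}) by permuting indices, and then to show that any two standard-form arrangements of a fixed class are joined by a finite sequence of elementary rigid motions, up to a final harmless permutation. By the discussion following Definition~\ref{standard form}, a standard-form arrangement is pinned down by three pieces of data: its class (the isomorphism class of $(P(A), r_A)$), the choice of which child of the root serves as $B^1$, and the position of $B^1$ on the wire. Two arrangements of the same class agree on the first datum, so it remains to reconcile the other two.

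The position is handled by rotation: the action of $S = \emptyset \subsetneq [n]$ translates every bead of the arrangement by one unit simultaneously, hence creates no overlap, so $A$ and $\emptyset\cdot A$ differ by an elementary rigid motion; composing such moves, an arbitrary rotation of the wire is a rigid motion. We may therefore assume the distinguished beads of the two arrangements sit in the same position, and only the choice of $B^1$ among the children of the root remains to be matched.

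The heart of the argument is that the distinguished bead can be passed to a cyclically adjacent child of the root by a rigid motion. If $d = 0$ this is vacuous, since then every bead is right-justified, a standard-form arrangement is just the beads of the given class packed tightly around the wire, and two such of the same class already differ by a rotation. Suppose $d \ge 1$. Since the height-one beads of $A$ have total length $n(d+2)$ on a wire of length $P = n(d+2) + d$, a right-justified arrangement has exactly one gap of $d$ units between consecutive height-one beads, and by the definition of right-justified this gap lies immediately to the right of $B^1$. Let $B$ be the height-one bead bounding this gap on its far side and let $S \subseteq [n]$ be the complement of the set of indices of $B$ and its descendants. Applying the $S$-action $d$ times slides $B$, together with every bead inside its well, one unit leftward at each step; because the entire subtree rooted at $B$ moves rigidly and $B$ only ever advances into the ambient gap --- reaching, but never crossing, $B^1$ --- no step produces an overlap, so each is an elementary rigid motion. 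The resulting arrangement is right-justified with its wire-gap now sitting to the right of $B$, so $B$ is its unique non-right-justified bead; after a permutation putting it in standard form, the role of $B^1$ has moved from one child of the root to a cyclically adjacent one. Iterating around the children of the root realizes any choice of distinguished bead.

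Assembling the pieces: given two right-justified arrangements of the same class, permute both into standard form, fix an isomorphism of their associated rooted plane trees, use the adjacent-transfer moves to bring the two distinguished beads into correspondence under this isomorphism, and rotate to align them on the wire. By the uniqueness recorded after Definition~\ref{standard form}, the two arrangements then coincide as uncolored bead arrangements, so they differ only by a permutation of indices, which is what we wanted. The step I expect to require genuine care is checking that the gap-sliding move is collision-free --- that sliding $B$ along with its whole subtree leaves every internal incidence intact and merely narrows the single wire-gap --- since this is exactly what certifies it as a composition of elementary rigid motions rather than an arbitrary instance of the $\Xi$-action.
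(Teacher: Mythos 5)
Your proof is correct and takes essentially the same route as the paper: normalize both arrangements to standard form, cyclically transfer the unique non-right-justified (distinguished) height-one bead among the children of the root via gap-closing elementary rigid motions, align by a rotation, and conclude from the uniqueness of a standard-form arrangement of a given class. The only cosmetic difference is that you slide the bead across the gap (with its subtree) leftward, while the paper slides the non-right-justified bead rightward until it is flush.
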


\begin{proof}
Let $A, A' \in CBA$ be right-justified colored bead arrangements of the same class.  Let $A = (B^1, \cdots, B^n), A' = (C^1, \cdots, C^n)$.  Permuting the entries of $A$, we may assume without loss of generality that $A$ is in standard form.  Permuting $A'$, we may assume that the isomorphism mapping $P(A)$ to $P(A')$ sends $B^i$ to $C^i$ for each $i$.  Note that $A'$ need not be in standard form; condition 1) of Definition \ref{standard form} will be satisfied, but not necessarily condition 2).

If condition 2) is not satisfied, then some $C^{j_1} \neq C^1$ is the unique bead (necessarily of height 1) which is not right justified.  Let $C^{j_1} > C^{j_2} > \cdots C^{j_k} > C^{j_1}$ be the height $1$ beads of $A'$, ordered cyclically by their right endpoints; note that $C^1 = C^{j_r}$ for some $r$.  Applying a rigid motion, we may translate $C^{j_1}$ (and all beads lying in its well) until $C^{j_1}$ is right justified, with its right edge adjacent to $C^{j_k}$.  The resulting colored bead arrangement is right justified and $C^{j_2}$ is now the unique bead which is not right-justified.  Repeat this procedure until $C^{j_r} = C^1$ is not right-justified.  Denote the new arrangement $A''$; clearly $A''$ is in standard form.

It is clear that $A''$ differs from $A'$ by a rigid motion; consequently, $A''$ has the same class as $A'$ and $A$.  Since $C^1$ has the same type as $B^1$, we can write $B^1 = C^1(i)$ for some $i$.  Then $A''(i)$ and $A$ have the same class, are both in standard form, and have the same first bead, hence $A''(i) = A$.  Since $A$ and $A''$ differ by a rigid motion, so do $A$ and $A'$.
\end{proof}

\begin{lemma}
\label{rigid motion lemma 3}
Two colored bead arrangements have the same class if and only if they differ by a rigid motion and a permutation of indices.  Two uncolored or free bead arrangements have the same class if and only if they differ by a rigid motion.
\end{lemma}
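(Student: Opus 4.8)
The plan is to bootstrap from Lemmas \ref{rigid motion lemma 1} and \ref{rigid motion lemma 2}, treating the colored case first and then transferring the result to the uncolored and free settings.

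For colored arrangements, the forward implication will be essentially definitional: a permutation of indices does not change the underlying uncolored arrangement, hence does not change the associated rooted plane tree, hence preserves the class; and it has already been observed that an elementary rigid motion — and therefore any rigid motion — leaves the class unchanged. For the reverse implication I would take $A, A' \in CBA$ of the same class and invoke Lemma \ref{rigid motion lemma 1} to produce right-justified colored bead arrangements $A_0$ and $A_0'$ obtained from $A$ and $A'$ by rigid motions. Since rigid motions preserve the class, $A_0$ and $A_0'$ share a class, so Lemma \ref{rigid motion lemma 2} shows they differ by a permutation of indices together with a rigid motion. Composing the two rigid motions (from $A$ to $A_0$ and from $A_0'$ to $A'$) with this one yields the claim for $A$ and $A'$.

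For uncolored (resp. free) arrangements, I would use that the class is defined intrinsically through $(P(A), r_A)$, which ignores the coloring, so it agrees with the class of any colored lift. Given uncolored $B, B'$ of the same class, choose colored lifts $A, A'$; by the colored case these differ by a rigid motion and a permutation $\sigma$, so $\sigma \cdot A$ differs from $A'$ by a rigid motion. Since $\sigma \cdot A$ and $A$ have the same image $B$ in $BA$, the arrangements $B$ and $B'$ are the images of colored arrangements ($\sigma \cdot A$ and $A'$) differing by a rigid motion, which is exactly the assertion that $B$ and $B'$ differ by a rigid motion; the converse is immediate because rigid motions preserve the class. The free case then follows by passing to rotation classes, noting that rotation of the wire is itself a rigid motion and does not affect the class.

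I do not expect a genuine obstacle here: the substantive work was already carried out in Lemmas \ref{rigid motion lemma 1} and \ref{rigid motion lemma 2}. The only delicate point is the bookkeeping relating colorings, permutations of indices, and the two quotient maps $CBA \to BA \to FBA$ — in particular, one must choose the colored lifts carefully so that "differ by a rigid motion" in the uncolored or free sense is correctly witnessed by an explicit pair of colored arrangements.
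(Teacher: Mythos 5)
Your proposal is correct and follows essentially the same route as the paper: rigid motions and permutations preserve the class for the forward direction, and the converse reduces via Lemma \ref{rigid motion lemma 1} to right-justified arrangements, where Lemma \ref{rigid motion lemma 2} applies; the uncolored and free cases are then deduced from the colored one. Your extra bookkeeping with colored lifts just makes explicit what the paper dismisses as immediate.
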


\begin{proof}
Applying an elementary rigid motion does not change the height of any bead, nor the relative ordering of the beads' right edges.  Thus two bead arrangements which differ by an elementary rigid motion have the same class, hence also for arbitrary rigid motions.

Conversely, let $A, A'$ be two colored bead arrangements of the same class.  By Lemma \ref{rigid motion lemma 1}, after changing $A$ and $A'$ up to a rigid motion, we can assume without loss of generality that both $A$ and $A'$ are right-justified.  By Lemma \ref{rigid motion lemma 2}, $A$ and $A'$ differ by a rigid motion and a permutation, and we are done.

The second statement follows immediately from the first.
\end{proof}

\begin{theorem}
\label{bead transitivity}
The action of $\Xi$ on $CBA$ is transitive.
\end{theorem}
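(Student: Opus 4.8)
The plan is to reduce the statement to a combinatorial claim about rooted plane trees and then prove that claim by induction on a complexity invariant.

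First I would use Lemma \ref{rigid motion lemma 3}: any two colored bead arrangements of the same class differ by a rigid motion and a permutation of indices. Both operations are realised inside $\Xi$ — a rigid motion is, by definition, a sequence of moves $S\circ A=S\cdot A$, and permutations lie in $\mathfrak{S}_n\subset\Xi$ — so arrangements of the same class lie in one $\Xi$-orbit. Every isomorphism class of rooted plane tree with $n$ edges is the class of some element of $CBA$, and the ``star'' arrangement $A_0=(B_1(0),B_1(-(d+2)),\dots,B_1(-(n-1)(d+2)))$, for which $\Phi(A_0)=(S_1,\dots,S_n)$, has as its class the star rooted at its centre. Hence it suffices to show that the class of an arbitrary $A\in CBA$ can be joined to the class of $A_0$ by a finite sequence of $\Xi$-moves.

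For the induction, set $c(A)=\sum_{B\in A}H(B)$; by Proposition \ref{bead tree properties} this equals $\sum_{v\neq r_A}d_{r_A}(v)$, so $c(A)\geq n$ with equality exactly when every bead has height one, i.e.\ when $A$ has the class of $A_0$. The key move is a ``pop-out'': if $B$ is a non-leaf bead (type $\geq 2$) whose contents are packed against one of its inner walls, then shifting $B$ one unit (the corresponding generator of $\Xi$) and running the algorithm of Definition \ref{bead action} forces a Type III mutation that shrinks $B$ and ejects a sub-bead to height one less; compatibility with the action on $\widehat{\mathcal{E}}$ is guaranteed by Lemma \ref{action compatibility 3}. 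When $d\geq 1$ there is slack on the wire, so after a preliminary rigid motion (allowed by the reduction above) one may take $B$ isolated on the chosen side; the shift then produces a single clean Type III mutation, one bead drops in height by one, and $c$ decreases by exactly one.

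The remaining and harder case is $d=0$ — the $(-1)$-Calabi-Yau case — where the arrangement is rigidly tiled, admits no rigid motions, and a one-unit shift of a bead necessarily triggers a cascade of Type I, II and III mutations; since Type I and II mutations lengthen beads, the net effect must be controlled. A direct (if lengthy) analysis of the cascade, using Proposition \ref{bead tree properties} and the explicit form of the mutations, shows that shifting a non-leaf bead $B$ of type $l$ towards its siblings of total weight $t$ on that side changes $c$ by $t-l+1$; such a move decreases $c$ precisely when $t\leq l-2$. The crux of the proof is therefore to show that whenever $A$ is not of star class some non-leaf bead $B$ admits such a direction — if necessary after first applying ``horizontal'' ($\Delta c=0$) re-rooting moves to reposition $B$ among its siblings. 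Granting this, the induction closes: every class reaches the class of $A_0$, so $CBA$ is a single $\Xi$-orbit.
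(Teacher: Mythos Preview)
Your reduction via Lemma \ref{rigid motion lemma 3} is correct and is exactly the reduction the paper makes.  The divergence is in how you connect the classes: the paper \emph{builds up} from the star by applying generators $S$ that move several type-1 beads simultaneously, producing only Type I mutations, and thereby realises the tree $(T,r)$ level by level.  You instead try to \emph{tear down} to the star by decreasing the height-sum $c(A)$ using Type III mutations of a single bead.

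For $d\ge 1$ your pop-out idea is reasonable and can be made to work with some care, but the $d=0$ case is a real gap, not a technicality you can wave through.  You acknowledge this yourself (``Granting this, the induction closes''), and the difficulty is genuine.  Consider $n=4$, $d=0$, and the arrangement $A_1=(B_2(7),B_1(6),B_2(3),B_1(2))$ with $c=6$.  The only non-leaf beads are the two $B_2$'s, each with $l=2$ and total sibling weight $t=2$, so your formula gives $\Delta c=+1$ in either direction; moving either $B_1$ triggers a Type II mutation that returns it to its starting position.  Thus no single-bead shift decreases $c$, and there are no rigid motions at all when $d=0$, so your ``horizontal re-rooting moves'' cannot be rigid motions.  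If one increases $c$ first (shift a $B_2$ to reach $c=7$) and then tries to come back down, one lands back at $A_1$; I could not find any sequence of single-bead shifts that reaches $c<6$.  The paper's move that actually reaches the star from $A_1$ is $\{2,4\}^{-1}$, which moves \emph{both} $B_2$'s at once and produces two simultaneous Type III mutations --- a possibility your single-bead framework does not contemplate.  Your formula $\Delta c=t-l+1$ also fails for beads of height $>1$: moving the inner $B_2$ in $(B_3(1),B_1(3),B_2(0),B_1(7))$ has $t=0$, $l=2$, but the cascade of Type III, II, III mutations returns the bead to its original position, giving $\Delta c=0$, not $-1$.

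In short, the descent-by-$c$ strategy with single-bead moves does not obviously terminate when $d=0$, and the missing argument is not a routine verification but the heart of the matter.  The paper's constructive approach sidesteps this entirely by allowing multi-bead moves and never needing to decrease any invariant.
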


\begin{proof}
Let $A = (B_1, B_1(-(d+2)), \cdots B_1(-n(d+2)))$.  We shall show that the orbit of $A$ is $CBA$.  By Lemma \ref{rigid motion lemma 3}, it suffices to show that the orbit of $A$ contains one representative of every class of colored bead arrangement.

Let $(T, r)$ be a rooted plane tree with $n+1$ vertices.  Let $V_{T,i}$ denote the set of vertices of $T$ of depth $i$.  Let $T_{\le i}$ denote the subtree of $T$ consisting of all vertices of depth $\le i$.  We shall construct a sequence $\{A_i\}_{i\ge 0}$ of bead arrangements with the following properties:\\
1)  $A_{i+1} = \alpha \circ A_i$ for some $\alpha \in \Xi$.\\
2)  For each $i$, there is an isomorphism $\phi_i: (T_{\le i}, r) \xrightarrow{\sim} (\mathcal{P}(A_i)_{\le i}, r_{A_i})$ of rooted plane trees.\\
3)  $\phi_i$ preserves weight; i.e., $W_T(v) = W_{\mathcal{P}(A_i)}(\phi_i(v))$ for each vertex $v \in T_{\le i}$.\\
4)  All beads in $A_i$ of height $i+1$ are of type 1.\\
5)  $A_i$ is right-justified and in standard form.

Let $A_0 = A$.  Suppose we have constructed $A_i  = (B^1, \cdots, B^n)$ for some $i \ge 0$.  Let $v \in T$ be a vertex of height $i$.  (If no such vertex exists, let $A_{i+1} = A_i$.)  Let $B^j = \phi_i(v)$ (if $i>0$).  Let $c_T(v) = \{v_1 > \cdots > v_r\}$ be the children of $v$.  Let $N_0 = 1$, and let $N_s = 1 + \sum_{c=1}^s W_T(v_c)$ for $s \ge 1$.  By 3) and 4), $\phi_i(v) \in A_i$ has weight $N_r$, and there are $N_r -1$ type 1 beads, $B^{j+1} > B^{j+2} > \cdots > B^{j+N_r -1}$ in the well of $B^j$.  (If $i = 0$, then $\phi_i(v) = r_A$ and the beads $B^{j+c}$ lie on the ring.  All superscripts are then taken modulo $n$.)  Since $A_i$ is right-justified in standard form, the beads $B^{j +c}, 1 \le c < N_r$ are adjacent to one another, so that $B^{j+s}(-1)$ has a Type I left collision with $B^{j+s+1}$, and $B^{j+1}$ is adjacent to the right ridge of $B^j$ (if $i>0$).

Let $S = \{j + c \mid 1 \le c < N_r, c\neq N_s \text{ for any } 0 \le s < r \}$.  When we apply $S$ to $A_i$, the moved beads are $B^j$, every bead not in the well of $B^j$ (if $i>0$), and each bead of the form $B^{j+N_s}$, for $0 \le s < r$.  Note that none of the beads outside the well of $B^j$ have collisions, since the only stationary beads are in the well of $B^j$.  By the same reasoning, $B^j$ can only have a Type III left collision, and this does not happen since $B^{j+1}$ is also a moved bead.  For each $0 \le s <r$, the bead $B^{j+N_s}$ undergoes $W_T(v_{s+1}) -1$ Type I left mutations, which place the beads $B^{j+N_s+1}, \cdots B^{j+ N_{s+1} -1}$ into its well; no Type II collisions are possible since $B^j$ is moved, and no Type III collisions are possible since the $B^{j+N_s}$ are of type 1.  Thus in $\mathcal{P}(S\circ A_i)$, the children of $\phi_i(v)$ have weight $W_T(v_s)$, and are arranged in the same order as the children of $v$.  After performing this process at each height $i$ vertex $v$ of $T$ and converting the tuple into right-justified standard form via a rigid motion and a permutation of indices, denote the resulting colored bead arrangement $A_{i+1}$.

By construction, $A_{i+1}$ is obtained from $A_i$ by application of an element of $\Xi$, $A_{i+1}$ is right-justified and in standard form, and all beads of height $i+2$ in $A_{i+1}$ are of type 1.  Note that $\mathcal{P}(A_{i+1})_{\le i} = \mathcal{P}(A_i)_{\le i}$.  It follows from the preceding paragraph that the map $\phi_i : T_{\le i} \rightarrow \mathcal{P}(A_{i+1})_{\le i}$ can be extended to an isomorphism $\phi_{i+1}: T_{\le i+1} \rightarrow \mathcal{P}(A_{i+1})_{\le i+1}$ which preserves the weight of all vertices for which it is defined.  Thus $A_{i+1}$ satisfies properties 1-5) above, hence the sequence $\{A_i\}$ exists.  Then $A_n$ is of class $(T, r)$ and lies in the orbit of $A$.

This process is illustrated in Figure \ref{beads9} below.
\end{proof}

\begin{figure}[H]
\centering
\begin{tikzpicture}[x = 0.5 cm, y = 0.5 cm]
\draw[thick] (0,0) -- (13, 0) ;
\foreach \x in {0, 1, 2, 3, 4, 5, 6, 7, 8, 9, 10, 11, 12, 13}
	\draw (\x, 2pt) -- (\x, -2pt);

\draw[black] (10, 0) rectangle (13, 1);
\draw[red, thick] (7, 0) rectangle (10, 1);
\draw[blue, thick] (4, 0) rectangle (7, 1);
\draw[green, thick] (1, 0) rectangle (4, 1);
\end{tikzpicture}

\vspace{10 pt}

\begin{tikzpicture}[x = 0.5 cm, y = 0.5 cm]
\draw[thick] (0,0) -- (13, 0) ;
\foreach \x in {0, 1, 2, 3, 4, 5, 6, 7, 8, 9, 10, 11, 12, 13}
	\draw (\x, 2pt) -- (\x, -2pt);

\draw[black] (0, 0) -- (12, 0) -- (12, 2) -- (11, 2) -- (11, 1) -- (1, 1) -- (1, 2) -- (0, 2) --cycle;
\draw[red, thick] (8, 1) rectangle (11, 2);
\draw[blue, thick] (5, 1) rectangle (8, 2);
\draw[green, thick] (2, 1) rectangle (5, 2);
\end{tikzpicture}

\vspace{10 pt}

\begin{tikzpicture}[x = 0.5 cm, y = 0.5 cm]
\draw[thick] (0,0) -- (13, 0) ;
\foreach \x in {0, 1, 2, 3, 4, 5, 6, 7, 8, 9, 10, 11, 12, 13}
	\draw (\x, 2pt) -- (\x, -2pt);

\draw[black] (0, 0) -- (12, 0) -- (12, 2) -- (11, 2) -- (11, 1) -- (1, 1) -- (1, 2) -- (0, 2) -- cycle;
\draw[red, thick] (5, 1) -- (11, 1) -- (11, 3) -- (10, 3) -- (10, 2) -- (6, 2) -- (6, 3) -- (5, 3) -- cycle;
\draw[blue, thick] (7, 2) rectangle (10, 3);
\draw[green, thick] (2, 1) rectangle (5, 2);
\end{tikzpicture}

\caption{Arrangments $A_0$, $A_1$, $A_2$ ; $n = 4$, $d=1$}
\label{beads9}
\end{figure}
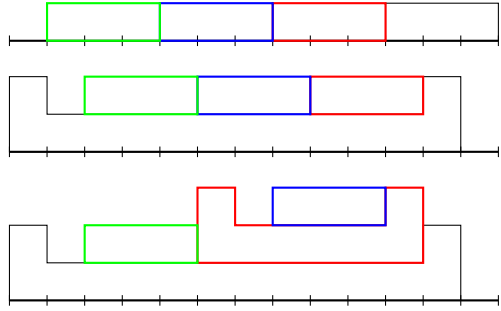

\begin{corollary}
\label{object transitivity}
The action of $\Xi$ on $\widehat{\mathcal{E}}$ is transitive, and $\widehat{\mathcal{E}} = \mathcal{E}$.
\end{corollary}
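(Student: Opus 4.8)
The plan is to transfer the transitivity result for bead arrangements (Theorem \ref{bead transitivity}) across the morphism of $\Xi$-sets $\Phi: CBA \to \widehat{\mathcal{E}}$ established in Theorem \ref{well-defined}. First I would recall that $\Phi$ is surjective: this follows from Proposition \ref{arrangement-generator compatibility 1} (or \ref{arrangement-generator compatibility 2}), since every orthogonal tuple arises as $\Phi(A)$ for some colored bead arrangement $A$. A surjective morphism of $G$-sets carries a transitive $G$-action to a transitive $G$-action: given $X, Y \in \widehat{\mathcal{E}}$, choose bead arrangements $A, A'$ with $\Phi(A) = X$, $\Phi(A') = Y$; by Theorem \ref{bead transitivity} there is $\alpha \in \Xi$ with $\alpha \circ A = A'$, and then $\alpha \cdot X = \alpha \cdot \Phi(A) = \Phi(\alpha \circ A) = \Phi(A') = Y$. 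Hence $\Xi$ acts transitively on $\widehat{\mathcal{E}}$.

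Next I would deduce $\widehat{\mathcal{E}} = \mathcal{E}$. The containment $\mathcal{E} \subseteq \widehat{\mathcal{E}}$ is immediate from Definition \ref{simple-minded tuple}. For the reverse, note that $(S_1, \dots, S_n) \in \mathcal{E}$: the simple modules form a $|w|$-basis of $A\dgstab$, as remarked after Theorem \ref{well-defined}. Since the action of $\Xi$ preserves $\mathcal{E}$ (this is the theorem ``The action of $\Xi$ on $\mathcal{E}$ is well-defined''), the entire $\Xi$-orbit of $(S_1, \dots, S_n)$ lies in $\mathcal{E}$. But by transitivity this orbit is all of $\widehat{\mathcal{E}}$, so $\widehat{\mathcal{E}} \subseteq \mathcal{E}$, giving equality.

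There is essentially no obstacle here: all the substantive work has been done in the preceding sections. The only points requiring care are (i) confirming that $\Phi$ is genuinely a morphism of $\Xi$-sets and not merely compatible with the $Free(\mathcal{P}'(n))$-part — but this is precisely the content of Theorem \ref{well-defined}, including the trivial compatibility with the $\mathfrak{S}_n$-action; and (ii) making sure the base point $(S_1,\dots,S_n)$ really is a generating tuple, which is the observation that $A = (B_1, B_1(-(d+2)), \dots, B_1(-n(d+2)))$ satisfies $\Phi(A) = (S_1,\dots,S_n)$ (noted in the discussion of right-justified arrangements) together with the non-emptiness remark for $\mathcal{E}$. So the corollary is a short formal consequence of Theorems \ref{bead transitivity} and \ref{well-defined} and the well-definedness of the $\Xi$-action on $\mathcal{E}$.
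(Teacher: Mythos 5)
Your proposal is correct and matches the paper's argument exactly: transitivity is transferred through the surjective morphism of $\Xi$-sets $\Phi: CBA \to \widehat{\mathcal{E}}$ using Theorem \ref{bead transitivity}, and $\widehat{\mathcal{E}} = \mathcal{E}$ follows because $\mathcal{E}$ is nonempty (containing $(S_1,\dots,S_n)$) and stable under $\Xi$. No gaps; this is the same short formal deduction the paper gives.
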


\begin{proof}
Transitivity follows from the fact that $\Xi$ acts transitively on $CBA$ and $\Phi: CBA \rightarrow \widehat{\mathcal{E}}$ is a surjective morphism of $\Xi$-sets.  Since $\mathcal{E} \subset \widehat{\mathcal{E}}$ is nonempty and stable under $\Xi$, it follows from transitivity that the two $\Xi$-sets are equal.
\end{proof}

\section{Connections to Simple-Minded Systems}
\label{Connections}

In this section, we discuss similarities between our work and that of Coelho Sim\~oes and Pauksztello.  Let $k$ be an algebraically closed field, and let $(\mathcal{T}, \Sigma)$ be a $k$-linear, Hom-finite, Krull-Schmidt triangulated category.

For $\mathcal{S}$ a set of objects in $\mathcal{T}$, let $\textsf{add}(\mathcal{S})$ denote the smallest full subcategory containing $\mathcal{S}$ which is closed under isomorphisms, finite coproducts, and finite direct summands.

\begin{definition}{(\cite{simoes2018simple}, Definition 2.1)}
Let $(X_1, \cdots, X_n)$ be a tuple of objects in $\mathcal{T}$.  We say $(X_1, \cdots, X_n)$ is a $|w|$-\textbf{simple-minded system} if it is $|w|$-orthogonal and $\mathcal{T} = \textsf{add}\langle \{\Sigma^{-m}X_i \mid 1 \le i \le n, 0 \le m < |w|\} \rangle$.
\end{definition}

A priori, being a $|w|$-simple-minded tuple is a slightly weaker condition than being a $|w|$-basis, but \cite{simoes2018simple}, Lemma 2.8, shows that the two definitions are equivalent.

$|w|$-simple-minded systems admit a theory of mutation \cite{simoes2017mutations}, \cite{dugas2015torsion} in which subsets of the system are altered and replaced to form a new system.  The alteration process involves approximation theory:  

\begin{definition}
Let $\mathcal{S}$ be a full subcategory of $\mathcal{T}$.  Let $X \in \mathcal{T}, S\in \mathcal{S}$.\\
1)  We say a morphism $\phi: X \rightarrow S$ is a \textbf{left $\mathcal{S}$-approximation of $X$} if $\Hom(S, S') \rightarrow \Hom(X, S')$ is surjective for any $S' \in \mathcal{S}$.  We say a left approximation $\phi$ is \textbf{minimal} if any endomorphism $g$ of $S$ satisfying $g\phi = \phi$ is an automorphism.\\
2)  Dually, we say a morphism $\phi: S \rightarrow X$ is a \textbf{right $\mathcal{S}$-approximation of $X$} if $\Hom(S', S) \rightarrow \Hom(S', X)$ is surjective for any $S' \in \mathcal{S}$.  We say a right approximation $\phi$ is a \textbf{minimal} if any endomorphism $g$ of $S$ satisfying $\phi g = \phi$ is an automorphism.\\
\end{definition}

Given a $|w|$-simple-minded system $(X_1, \cdots, X_n)$, let $S \subsetneq [n]$ and let $\mathcal{S} = \langle \{X_s \mid s \in S\} \rangle$.  For each $i \notin S$, let $f_i: \Sigma^{-1} X_i \rightarrow S_i$ be a minimal left $\mathcal{S}$-approximation of $\Sigma^{-1}X_i$, and define $X_i' := \Sigma^{-1}C(f_i)$.  For $i \in S$, let $X_i' = X_i$.  The resulting tuple $(X_i')_i$ is $|w|$-simple-minded and is called the \textbf{left mutation of $(X_i)_i$ at $S$}.  One defines right mutations dually, using the cone of a minimal right $\mathcal{S}$-approximation of $\Sigma X$.

We have the following relationship between minimal approximations and minimal extensions:
\begin{proposition}
Let $\mathcal{S}$ be a full subcategory of $\mathcal{T}$.\\
1)  Let $f:  X_\mathcal{S} \rightarrow X$ be the minimal extension of $X$ by $\mathcal{S}$.  Then $\phi:  \Sigma^{-1}X \rightarrow \Sigma^{-1}C(f)$ is a minimal left $\mathcal{S}$-approximation of $\Sigma^{-1}X$.\\
2)  Let $f:  X \rightarrow X^\mathcal{S}$ be the minimal $\mathcal{S}$-extension by $X$.  Then $\phi:  C(f) \rightarrow \Sigma X$ is a minimal right $\mathcal{S}$-approximation of $\Sigma X$.
\end{proposition}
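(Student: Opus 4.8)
The plan is to prove part 1) in full; part 2) then follows by the evident dual argument (reverse all arrows, replace $\Sigma^{-1}$ by $\Sigma$, and use the cone of $f\colon X\to X^{\mathcal{S}}$ in place of that of $f\colon X_{\mathcal{S}}\to X$). First I would fix a distinguished triangle $X_{\mathcal{S}}\xrightarrow{f}X\xrightarrow{g}C(f)\xrightarrow{h}\Sigma X_{\mathcal{S}}$ and apply $\Sigma^{-1}$ to obtain the triangle $\Sigma^{-1}X_{\mathcal{S}}\xrightarrow{\Sigma^{-1}f}\Sigma^{-1}X\xrightarrow{\Sigma^{-1}g}\Sigma^{-1}C(f)\xrightarrow{\Sigma^{-1}h}X_{\mathcal{S}}$. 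The morphism $\phi$ of the statement is $\Sigma^{-1}g$, and rotating once more shows $C(\phi)\cong X_{\mathcal{S}}$, so $\Sigma^{-1}C(\phi)\cong\Sigma^{-1}X_{\mathcal{S}}$; this identification is precisely what makes the present proposition the bridge between the mutation formula of Dugas/Coelho Sim\~oes and the action in Definition \ref{generator action}. By Definition \ref{maximal extension} we already know $\Sigma^{-1}C(f)\in\mathcal{S}$, which is the requirement that the target of a left $\mathcal{S}$-approximation lie in $\mathcal{S}$.

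Next I would verify that $\phi$ is a left $\mathcal{S}$-approximation of $\Sigma^{-1}X$. Applying $\Hom(-,S)$ to the triangle above, the map $\Hom(\Sigma^{-1}C(f),S)\to\Hom(\Sigma^{-1}X,S)$ in the long exact sequence is precisely precomposition with $\phi=\Sigma^{-1}g$; this is exactly the map that Definition \ref{maximal extension} asserts to be an isomorphism for every $S\in\mathcal{S}$, and in particular it is surjective, which is the defining property of a left $\mathcal{S}$-approximation.

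For minimality, rather than running a Krull--Schmidt ``no superfluous summand'' argument I would feed the object $S=\Sigma^{-1}C(f)$ itself (which lies in $\mathcal{S}$) into the isomorphism just obtained: precomposition with $\phi$ gives an isomorphism $\Hom(\Sigma^{-1}C(f),\Sigma^{-1}C(f))\xrightarrow{\sim}\Hom(\Sigma^{-1}X,\Sigma^{-1}C(f))$, hence an injection. If $\psi$ is an endomorphism of $\Sigma^{-1}C(f)$ with $\psi\circ\phi=\phi=\mathrm{id}\circ\phi$, injectivity forces $\psi=\mathrm{id}$, which is certainly an automorphism; thus $\phi$ is a minimal left $\mathcal{S}$-approximation. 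I would note that this uses only the defining property of a maximal extension, so no Krull--Schmidt or orthogonality hypothesis is needed.

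I do not expect a genuine obstacle here. The only points needing care are (i) matching the connecting homomorphism of the long exact sequence with the map named in Definition \ref{maximal extension} — this holds on the nose, and any sign that creeps in is irrelevant to being an isomorphism — and (ii) recognizing that minimality is automatic once the approximation property is tested against the target object itself. Part 2) is then obtained by dualizing: starting from $X\xrightarrow{f}X^{\mathcal{S}}\xrightarrow{g}C(f)\xrightarrow{h}\Sigma X$, the morphism $\phi$ of the statement is $h$; the conditions $C(f)\in\mathcal{S}$ and $\Hom(S,C(f))\xrightarrow{\sim}\Hom(S,\Sigma X)$ from Definition \ref{maximal extension} show $\phi$ is a right $\mathcal{S}$-approximation of $\Sigma X$, and testing at $S=C(f)$ gives minimality by the same injectivity argument.
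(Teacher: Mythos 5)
Your argument is correct and is essentially the paper's own proof: both read off from Definition \ref{maximal extension} that $\Sigma^{-1}C(f)\in\mathcal{S}$ and that precomposition with $\phi$ gives an isomorphism $\Hom(\Sigma^{-1}C(f),S)\to\Hom(\Sigma^{-1}X,S)$, use surjectivity for the approximation property and injectivity (tested at $S=\Sigma^{-1}C(f)$) to force $g\phi=\phi\Rightarrow g=\mathrm{id}$ for minimality, and dualize for part 2). The extra bookkeeping with the rotated triangle is fine but adds nothing beyond the paper's terser version.
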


\begin{proof}
By Definition \ref{maximal extension}, $\Sigma^{-1}C(f) \in \mathcal{S}$ and for any $S' \in \mathcal{S}$, the map $\phi^*: \Hom(\Sigma^{-1}C(f), S') \rightarrow \Hom(\Sigma^{-1}X, S')$ is an isomorphism.  It follows immediately that $\phi$ is a left $\mathcal{S}$-approximation of $X$.  To show minimality, let $g$ be an endomorphism of $\Sigma^{-1}C(f)$ such that $g\phi = \phi$.  It follows immediately from injectivity of $\phi^*$ that $g = id$.

The proof of 2) is dual.
\end{proof}

An immediate consequence of the above proposition is that given any $|w|$-simple-minded system $(X_i)_i$ and $S\subsetneq [n]$, the action of $S$ on $(X_i)_i$ is precisely the left mutation at $S$.

\subsection{Beads and Arcs}

Let $k$ be an algebraically closed field, and let $w< 0, n \ge 2$ be integers.  Write $d = |w|-1$ and $P = (n+1)(d+2)-2$.  Let $kA_n$ denote the path algebra of the type $A_n$ Dynkin quiver.  Then the trivial extension algebra of $kA_n$ is a Brauer tree algebra which is derived equivalent to the star with $n$ edges.  In \cite{simoes2015hom}, Coelho Sim\~oes studies $|w|$-Hom configurations (equivalent in this setting to $|w|$-simple-minded systems) in a $w$-Calabi-Yau category $\mathcal{C}_{|w|}(kA_n)$ which is obtained as an orbit category of the bounded derived category of $kA_n$.  

Coelho Sim\~oes constructs a geometric model of $\mathcal{C}_{|w|}(Q)$ in which objects are represented by certain diagonals of a $P$-gon.  More specifically, objects are represented by $(d+2)$-diagonals, which connect vertices separated by $k(d+2) -1$ edges for some $1 \le k \le n$.  A $|w|$-Hom-configuration is represented by $n$ pairwise non-intersecting $(d+2)$-diagonals.

Comparison of the Auslander-Reiten quivers reveals that $\mathcal{C}_{d+1}(kA_n)$ is equivalent to $A_{n,d}\dgstab$, where $A_{n,d}$ is the Brauer tree algebra on the star with $n$ edges and socle graded in degree $-d$.  There is a two-to-one map from beads to $(d+2)$-diagonals:  to the bead $B_l(i) = [[i-l(d+2), i]]$ one associates the diagonal connecting $i$ and $i-l(d+2)+1$, i.e., one draws a line connecting the right endpoints of the ridges of the bead.  It is easy to verify that a bead and its partner map to the same diagonal, and that two beads overlap if and only if the corresponding diagonals cross.  The three types of bead mutation correspond to the Ptolemy Arcs of class II described in \cite{simoes2017mutations}, Figure 3.

\section{Acknowledgements}

I would like to thank Rapha\"el Rouquier for his advice and constant support during the writing of this paper.  I would also like to thank Bon-Soon Lin for sharing his combinatorial wisdom with me, as well as Raquel Coelho Sim\~oes and David Pauksztello for helpful discussions regarding connections to their work.

\bibliography{References}{}
\bibliographystyle{plain}

\end{document}